\title[Intersection form, laminations and currents] {Intersection form, laminations and currents on free groups}
\author[I.~Kapovich]{Ilya Kapovich}
\address{\tt Department of Mathematics, University of Illinois at
  Urbana-Champaign, 1409 West Green Street, Urbana, IL 61801, USA
  \newline http://www.math.uiuc.edu/\~{}kapovich/} \email{\tt
  kapovich@math.uiuc.edu}
\author[M.~Lustig]{Martin Lustig}\address{\tt Math\'ematiques
  (LATP), Universit\'e Paul C\'ezanne -Aix Marseille III,\\  av. Escadrille
  Normandie-Ni\'emen, 13397 Marseille 20, France} \email{\tt Martin.Lustig@univ-cezanne.fr}
\newtheorem{thm}{Theorem}[section] \newtheorem{lem}[thm]{Lemma}
\newtheorem{cor}[thm]{Corollary} 
\newtheorem{prop}[thm]{Proposition} \theoremstyle{definition}
\newtheorem{defn}[thm]{Definition}
\newtheorem{notation}[thm]{Notation}
\newtheorem{conv}[thm]{Convention} \newtheorem{rem}[thm]{Remark}
\newtheorem{propdfn}[thm]{Proposition-Definition}
\def\strutdepth{\dp\strutbox}
\def \ss{\strut\vadjust{\kern-\strutdepth \sss}}
\def \sss{\vtop to \strutdepth{
\baselineskip\strutdepth\vss\llap{$\diamondsuit\;\;$}\null}}
\def\strutdepth{\dp\strutbox}
\def \sst{\strut\vadjust{\kern-\strutdepth \ssss}}
\def \ssss{\vtop to \strutdepth{
\baselineskip\strutdepth\vss\llap{$\spadesuit\;\;$}\null}}
\def\strutdepth{\dp\strutbox}
\def \ssh{\strut\vadjust{\kern-\strutdepth \sssh}}
\def \sssh{\vtop to \strutdepth{
\baselineskip\strutdepth\vss\llap{$\heartsuit\;\;$}\null}}
\newcommand{\R}{\mathbb R}
\begin{document}

\begin{abstract}
  Let $F$ be a free group of rank $N\ge 2$, let $\mu$ be a geodesic
  current on $F$ and let $T$ be an $\mathbb R$-tree with a very
  small isometric action of $F$. We prove that the geometric intersection
  number $\langle T, \mu\rangle$ is equal to zero if and only if the
  support of $\mu$ is contained in the dual algebraic lamination
  $L^2(T)$ of $T$. Applying this result, we obtain a generalization of a theorem
  of Francaviglia regarding length spectrum compactness for currents
  with full support. We use the main result to obtain "unique ergodicity" type properties for the attracting and repelling fixed points of atoroidal iwip elements of $Out(F)$ when acting both on the compactified Outer Space and on the projectivized space of currents. We also show that the some of the translation length functions of any two "sufficiently transverse" very small $F$-trees is bilipschitz equivalent to the translation length function of an interior point of the Outer space. As another application, we define the notion of a
  \emph{filling} element in $F$ and prove that filling elements are
  "nearly generic" in $F$. We also apply our results to the notion
  of \emph{bounded translation equivalence} in free groups.
\end{abstract}

\thanks{The first author was supported by the NSF
  grants DMS-0404991 and DMS-0603921}

\subjclass[2000]{Primary 20F, Secondary 57M, 37B, 37D}

\maketitle

\tableofcontents

\section{Introduction}\label{intro}

The notion of a geometric intersection number between the free
homotopy classes of two essential closed curves on a compact surface
plays a fundamental role in the study of the Teichm\"uller space and
of the mapping class group. This notion naturally extends to the
notion of an intersection number between a closed curve and a
measured lamination (Thurston) as well as the notion of an
intersection form between two geodesic currents on a closed
hyperbolic surface (Bonahon). The space of measured laminations
naturally embeds into the space of currents. Thus Bonahon's
intersection form can be used to define the intersection number
between a measured lamination (or equivalently, its dual $\mathbb
R$-tree) and a geodesic current on a surface.

In the free group case, Culler-Vogtmann's \emph{Outer
space}~\cite{CV}, $cv(F)$, provides a natural analogue of the
Teichm\"uller space of a surface. The points of $cv(F)$ can be
thought of as minimal free discrete actions of $F$ on $\mathbb
R$-trees. One also often considers the projectivized outer space
$\mathbb P cv(F)$ that can be thought of as a subset
$CV(F)\subseteq cv(F)$ corresponding to actions where the
quotient graph has volume $1$.  Taking projective
classes of equivariant Gromov-Hausdorff limits of elements of $cv(F)$
leads to a natural Thurston-type compactification $\overline {CV}(F)=CV(F)\cup
\partial CV(F)$. The space $\overline {CV}(F)$ turns out to
consists precisely of all the projective classes of all minimal
\emph{very small} isometric actions of $F$ on $\mathbb
R$-trees~\cite{BF93,CL}. Recall that an isometric action of $F$ on
an $\mathbb R$-tree is \emph{very small} if nontrivial stabilizers of
non-degenerate arcs are maximal cyclic and if tripod stabilizers are
trivial.

Most automorpisms of $F$ (where $N\ge 3$) are not geometric, in
the sense that they are not induced by a self-homeomorphism of compact surface with boundary.
This leads to the breakdown, in the case of a free
group, of numerous symmetries and dualities from the hyperbolic
surface situation, and most equivalent notions from the world of
surfaces lead to distinct concepts in the free group setting.

The notion of a \emph{geodesic current} on $F$ (see
Definition~\ref{defn:current} below), and more generally, on a
word-hyperbolic group, is a measure-theoretic generalization of the
notion of a conjugacy class of a group element or of a free homotopy
class of a closed curve on a surface. Much of the motivation for
studying currents comes from the work of Bonahon about geodesic
currents on hyperbolic surfaces~\cite{Bo86,Bo88}.

The space $Curr(F)$ of all geodesic currents has a useful linear
structure and admits a canonical $Out(F)$-action. The space
$Curr(F)$ turns out to be a natural companion of the Outer space
and contains additional valuable information about the geometry and
dynamics of free group automorphisms. Examples of such applications
can be found in \cite{Bo91,CHL3,Fra,Ka1,Ka2,Ka3,KL,KN,KKS,Ma} and
other sources. Kapovich proved~\cite{Ka2} that for $F$ there does
not exist a natural symmetric analogue of Bonahon's intersection
number between two geodesic currents. However, there exists a natural
$Out(F)$-equivariant continuous \emph{intersection form}
\[
\langle\ ,\  \rangle : \overline{cv}(F)\times Curr(F)\to \mathbb
R,
\]
where $\overline{cv}(F)$ is the space of all very small minimal
isometric actions of $F$ on $\mathbb R$-trees and where
$Curr(F)$ is the space of geodesic currents on $F$. This
intersection form has several important features in common with
Bonahon's construction. In particular, if $T\in \overline{cv}(F)$
and $\eta_g\in Curr(F)$ is the \emph{counting current} for $g\in
F-\{1\}$ (see Definition~\ref{defn:rational} below), then
\[
\langle T, \eta_g\rangle=||g||_T,
\]
where $||g||_T$ is the \emph{translation length} of $g$ with respect to the
tree $T$, that is $||g||_T=\min_{x\in T} d(x,gx)$.
The intersection form was introduced in~\cite{Ka1,Ka2,L} for free
simplicial actions of $F$, that is for the non-projectivized Outer
Space $cv(F)$.  In a recent paper~\cite{KL4} we proved that the intersection form extends continuously to the
closure $\overline{cv}(F)$ of $cv(F)$ consisting of all minimal
very small isometric actions of $F$ on $\mathbb R$-trees.
Note that the projectivization $\mathbb P \overline{cv}(F)$ of
$\overline{cv}(F)$ is
exactly the compactification $\overline{CV}(F)$ of $CV(F)$.
For $T\in \overline{cv}(F)$ and for $\mu\in Curr(F)$  we will also call $\langle T,\mu\rangle$ and the \emph{geometric intersection number} of $T$ and $\mu$.

In general, if $T\in \overline{cv}(F)$
and if $\mu\in Curr(F)$ is approximated by rational currents as
$\mu=\lim_{i\to\infty} \lambda_i \eta_{g_i}$, where $g_i\in F$,
$\lambda_i\ge 0$, the geometric intersection number $\langle T,\mu\rangle$ can
be computed as
\[
\langle T,\mu\rangle=\lim_{i\to\infty} \lambda_i ||g_i||_T
\tag{$\ddag$}
\]
Ursula Hamenst\"adt~\cite{Ha} recently used our result from \cite{KL4} about the continuous extension of the intersection form to $\overline{cv}(F)$   as a key ingredient to prove that any non-elementary subgroup of $Out(F)$, where $N\ge 3$, has infinite dimensional second bounded cohomology group (infinite dimensional space of quasi-morphisms). This in turn has an application to proving that any homomorphism from any lattice in a higher-rank semi-simple Lie group to $Out(F)$, where $N\ge 3$, has finite image.

Very recently Bestvina and Feighn~\cite{BF08} used \cite{KL4}  to show
that for any finite collection $\phi_1,\dots,\phi_m\in Out(F_N)$ of {\em iwip} outer automorphisms of $F_N$ (``irreducible automorphisms with irreducible powers'', see Definition~\ref{defniwip})  there exists a $\delta$-hyperbolic complex $X = X(\phi_1,\dots,\phi_m)$ with an isometric $Out(F_N)$-action where each $\phi_i$ acts with a positive translation length.

Another crucial notion in the surface theory is that of
a geodesic lamination on a hyperbolic surface. In the free group
case, there is a companion notion of an abstract \emph{algebraic
lamination} which is understood as a closed $F$-invariant and
flip-invariant subset of
\[
\partial^2 F=\{(\xi_1,\xi_2): \xi_1,\xi_2\in \partial F, \xi_1\ne
\xi_2\}.
\]

A variation of this concept
was successfully exploited by Bestvina, Feighn and
Handel~\cite{BFH97} to analyze the dynamics of free group
automorphisms and the algebraic structure of subgroups of
$Out(F)$. That paper in turn played a key role in the eventual
proof of the Tits alternative for $Out(F)$ by Bestvina, Feighn and
Handel in \cite{BFH00,BFH05}.

Recently Coulbois, Hilion and Lustig~\cite{CHL1,CHL2,CHL3} gave a
detailed abstract treatment of the notion of an algebraic lamination
for free group. In particular, given a very small action of $F$ on
an $\mathbb R$-tree $T$, there is~\cite{CHL2} a naturally defined ``dual
lamination'' $L^2(T)$ on $F$ (see section 3 below).
In the special case where $T$ belongs to $cv(F)$, i.e. the $F$-action on $T$ is free and simplicial, then $L^2(T) = \emptyset$.
The relationship between $\mathbb R$-tree actions, laminations and
geodesic currents in the free group case turns out to be
considerably more delicate and complicated then for the case of
hyperbolic surfaces.  Investigating this relationship is an
important basic task in the study of $Out(F)$.

In the present paper we study the situation where the intersection
number between a tree and a current is equal to zero. Our main
result is:

\begin{thm}\label{thm:main}
Let $F$ be a finitely generated nonabelian free group with a very
small minimal isometric action on an $\mathbb R$-tree $T$. Let
$\mu\in Curr(F)$.

Then $\langle T,\mu\rangle=0$ if and only if
$supp(\mu)\subseteq L^2(T)$.
\end{thm}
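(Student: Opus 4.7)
The two directions both rely on the approximation formula $(\ddag)$ combined with the following description of $L^2(T)$ (Coulbois--Hilion--Lustig): $(\xi^-, \xi^+) \in L^2(T)$ iff for every neighborhood $U$ of $(\xi^-, \xi^+)$ and every $\epsilon > 0$ there exists $g \in F$ with $(g^{-\infty}, g^{+\infty}) \in U$ and $||g||_T / |g|_A < \epsilon$, where $|g|_A$ denotes cyclic length with respect to a fixed basis $A$ of $F$.  Equivalently, $L^2(T) = \bigcap_{\epsilon > 0} \overline{\{(g^{-\infty},g^{+\infty}) : g \in F,\ ||g||_T < \epsilon |g|_A\}}$.

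For sufficiency, assume $\mathrm{supp}(\mu) \subseteq L^2(T)$ and take any rational approximation $\mu = \lim_i \lambda_i \eta_{g_i}$ with $\lambda_i \asymp 1/|g_i|_A$. The support condition forces the $\eta_{g_i}$-mass of every cylinder disjoint from $L^2(T)$ to be $o(|g_i|_A)$; combined with the above characterization of $L^2(T)$ and a diagonal argument, this allows one to refine the sequence so that additionally $||g_i||_T / |g_i|_A \to 0$. Then $\lambda_i ||g_i||_T \to 0$, whence $(\ddag)$ gives $\langle T, \mu\rangle = 0$.

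For necessity, suppose for contradiction that $(\xi^-, \xi^+) \in \mathrm{supp}(\mu) \setminus L^2(T)$. Since $L^2(T)$ is closed and $F$-invariant, there is a reduced word $v$ such that $(\xi^-, \xi^+) \in C_v$, $\overline{C_v} \cap L^2(T) = \emptyset$, and $\mu(C_v) > 0$. The characterization of $L^2(T)$ then yields an $\epsilon > 0$ with $||g||_T \geq \epsilon |g|_A$ for every $g$ whose axis endpoints lie in $\overline{C_v}$. For any approximation $\mu = \lim_i \lambda_i \eta_{g_i}$, the number of cyclic occurrences of $v$ in $g_i$ is asymptotic to $\mu(C_v) |g_i|_A$, so the uniform lower bound forces $\lambda_i ||g_i||_T$ to stay bounded away from zero, contradicting $\langle T, \mu\rangle = 0$ via $(\ddag)$.

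\emph{Main obstacle.} The common technical difficulty in both directions is translating the pointwise bound $||g||_T \geq \epsilon |g|_A$ (valid for elements whose \emph{entire} axis lies in a prescribed cylinder) into a global bound on $||g_i||_T$ for approximating elements whose axes merely visit that cylinder intermittently. Translation length in a very small $\R$-tree is not additive along an axis, and subsegments of an axis are not themselves axes of $F$-elements, so no direct decomposition is available. The cleanest remedy is to invoke the continuous extension of the intersection form from \cite{KL4}, which presents $\langle T, \mu\rangle$ as a limit, over refinements of finite cylinder partitions of $\partial^2 F / F$, of non-negative sums $\sum_u \ell_T(u)\,\mu(Cyl(u))$. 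In the necessity direction the cylinder $C_v$ then contributes at least $\epsilon |v|\mu(C_v) > 0$; in the sufficiency direction, refinement drives $\ell_T(u)$ to zero on the cylinders meeting $L^2(T)$, which carry all of $\mu$'s mass, so the total vanishes.
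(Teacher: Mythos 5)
Your argument in both directions rests on a ``characterization'' of $L^2(T)$ in terms of the relative quantity $||g||_T/|g|_A$, but this is not the definition used in the paper (Definition~\ref{defn:dual}), which demands cyclically reduced $w$ containing a given $v$ with the \emph{absolute} bound $||w||_T\le\epsilon$, and the two conditions are genuinely inequivalent. Let $T$ be the Bass--Serre tree of the free splitting $F=\langle a\rangle\ast\langle b\rangle$, so that $||g||_T$ is the cyclic syllable count of $g$. Since $T$ is discrete, ``$||w||_T\le\epsilon$'' for small $\epsilon$ forces $w$ to be elliptic, hence a power of $a$ or $b$; in particular $(b^{-\infty},a^{+\infty})\notin L^2(T)$ (and indeed $\langle T,\eta_{ab}\rangle=||ab||_T=2>0$, consistently with Theorem~\ref{thm:main}). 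But $g_n=a^nb^n$ satisfies $||g_n||_T/|g_n|_A=2/(2n)\to 0$ while $(g_n^{-\infty},g_n^{+\infty})\to(b^{-\infty},a^{+\infty})$, so your characterization would put this pair into $L^2(T)$; in fact the set $\bigcap_{\epsilon>0}\overline{\{(g^{-\infty},g^{+\infty}):||g||_T<\epsilon|g|_A\}}$ equals all of $\partial^2 F$ for this $T$. Consequently, in the necessity direction the uniform lower bound $||g||_T\ge\epsilon|g|_A$ on a whole cylinder $\overline{C_v}$ is simply not deducible from $(\xi^-,\xi^+)\notin L^2(T)$; upgrading the absolute statement to a relative one is precisely the hard content of the ``only if'' direction, and assuming it is circular.

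Your ``main obstacle'' paragraph correctly identifies the non-additivity of translation length as the crux, but the proposed remedy invokes a formula that does not exist: the cylinder-sum expression $\sum_u \ell_T(u)\,\mu(\mathrm{Cyl}(u))$ of Proposition-Definition~\ref{pd:int-form}(2) is only valid for $T\in cv(F)$, a free simplicial action with its metric graph, and for a tree with dense orbits there is no finite cylinder partition on which $\ell_T$ is defined. The paper replaces such a formula by the bounded back-tracking machinery of Section~4, and it is forced to split the ``only if'' direction into three cases: dense orbits (Section~6, using the Levitt--Lustig fact, Proposition~\ref{prop:LL}, that for dense-orbit trees one can choose a free basis $B$ with $BBT_{T,p}(B)$ arbitrarily small, plus the double bounded cancellation constants to move between bases), the discrete simplicial case (Sections~7--8, via quasigeodesic $\mathcal Y$-reduced cyclic path-words for the associated graph of groups), and the general mixed case (Sections~9--10, by restricting currents to vertex stabilizers). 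Your sufficiency sketch is closer in spirit to the paper's Theorem~\ref{thm:main2}, but ``a diagonal argument'' is not a substitute for the operative step: the paper decomposes each approximating cyclic word into fixed-length blocks, uses the hypothesis $supp(\mu)\subseteq L^2(T)$ together with $BBT$ to get an absolute displacement bound on each block lying in $supp_A(\mu)$, and then uses Lemma~\ref{lem:LL} to sum the block estimates; no refinement of the approximating sequence is involved, and the $BBT$ inequality cannot be omitted.
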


Here $supp(\mu)\subseteq \partial^2 F$ is the \emph{support of
$\mu$} (see Section~\ref{sect:laminations} below for the
definition).

There is a known similar statement to Theorem~\ref{thm:main} in the surface context. 
Namely, suppose $\lambda$ and $\mu$ are measured geodesic laminations on a punctured hyperbolic surface $S$. The transverse measures on $\lambda$ and $\mu$ determines a geodesic currents $\hat \lambda$ and $\hat\mu$ on the surface $S$.
In this case Bonahon's intersection number between geodesic currents $\widehat \lambda$ coincides with Thurston's geometric intersection number: $i(\widehat \lambda, \widehat \mu)=i(\lambda,\mu)$. Moreover, since $\pi_1(S)$ is free, geodesic currents on $S$ in Bonahon's sense are also geodesic currents on $\pi_1(S)$ in the sense of the present paper.
Then $i(\lambda,\mu)=0$ if and only if the supports $supp(\lambda)$ and $supp(\lambda)$ of $\lambda$ and $\mu$ intersect in a common sublamination. If $T_\lambda$ denotes the ``dual" $\R$-tree transverse to $\lambda$ with metric defined by the transverse measure on $\lambda$ (see Ch. 11.12 in \cite{Kap} for details), then the definition in \cite{KL4} gives:
$$\langle T_\lambda, \widehat\mu \rangle = i(\lambda, \mu) = i(\widehat\lambda, \widehat\mu).$$
The lamination $L^2(T_\lambda)$ always contains $supp(\lambda)$, and if the latter {\em fills} the surface (i.e. all complementary components are contractible),  $L^2(T_\lambda)$ is precisely equal to the union of $supp(\lambda)$ with the (finite) set of diagonal leaves in the complementary surfaces of $supp(\lambda)$. In particular we see that in this case one has $\langle T_\lambda, \mu \rangle = 0$ if and only if $supp(\mu)$ is contained in $supp(\lambda) \subseteq supp(T_\lambda)$.

\smallskip

One of the main motivations and prospective uses for
Theorem~\ref{thm:main} is to analyze the \emph{intersection graph},
introduced by the authors in \cite{KL4} in order to study various
free group analogues of the curve complex. The intersection graph
$\mathcal I(F)$ is a bipartite graph with the vertex set $\mathbb
P\overline{cv}(F)\sqcup \mathbb PCurr(F)$ where $[T]\in  \mathbb
P\overline{cv}(F)$ and $[\mu]\in  \mathbb PCurr(F)$ are adjacent
in  $\mathcal I(F)$ if and only if
$\langle T,\mu \rangle=0$.
While it is generally not possible to define a good notion of an
intersection number between two conjugacy classes in a free group, one
can use the intersection form to generalize the notion of having
distance $\le 2$ in the standard curve complex. Thus one can define a
graph, whose vertices are conjugacy classes of primitive elements in
$F$ where two vertices $[a],[b]$ are adjacent if there exists $T\in
\overline{cv}(F)$ such that $\langle T,\eta_a\rangle=\langle T,\eta_b\rangle=0$, that is
$||a||_T=||b||_T=0$ (this graph is almost the same as the ``dual cut graph'' defined in \cite{KL4}). Taking a
dual point of view, one can think of an essential simple closed curve
on a surface as a splitting of the surface group over $\mathbb Z$,
where two curves are adjacent in the curve complex if and only if the
corresponding $\mathbb Z$-splittings have a common refinement. This
leads to the notion~\cite{KL4} of a \emph{cut graph} for $F$ whose vertices
are nontrivial splittings of $F$ as the fundamental group of a graph
of groups with a single edge and the trivial edge group, and where
adjacency again corresponds to having a common refinement. A variation
of this construction would declare two splittings to be adjacent if
there exists a nontrivial element in $F$ which is elliptic with
respect to both of them. All these (as others) natural analogues of
the curve complex can be studied by means of the intersection
graph. In \cite{KL4} we prove that for $N\ge 3$ the intersection graph
and all the free group analogues of the curve complex derived from it
have infinite diameter, by analyzing the action of iwip automorphisms
(see Definition \ref{defniwip}).  Iwip
automorphisms are also sometimes referred to as \emph{fully rreducible} in the literature. Note that Theorem~\ref{thm:main} gives a
characterization of adjacency in the intersection graph.

In this paper we apply Theorem~\ref{thm:main} to obtain a
generalization of a result of Francaviglia~\cite{Fra} about length
spectrum compactness for \emph{uniform currents} on $F$ (see
Definition~\ref{defn:na} below for the definition of the uniform current
with respect to a free basis of $F$). For $T\in cv(F)$ and $\mu\in
Curr(F)$, the \emph{automorphic length spectrum of $\eta$ with respect
  to $T$ } is the set
\[
{\mathcal S}_T(\mu):=\{ \langle T, \phi \mu\rangle: \phi\in
Out(F)\}=\{ \langle \phi T, \mu\rangle: \phi\in Out(F)\}\subseteq
\mathbb R.
\]
If $T=X(F,A)$ is the Cayley graph of $F$ corresponding to a free basis
$A$ of $F$ and $\mu=\nu_A$ is the uniform current corresponding to
$A$ then
\[
{\mathcal S}_T(\mu)=\{ \lambda_A(\phi): \phi\in Out(F)\}
\]
where $\lambda_A(\phi)$ is the \emph{generic stretching factor} of
$\phi$ with respect to $A$ (see \cite{KKS,Ka2} for definitions). Here we obtain:

\begin{thm}\label{thm:compact}
Let $\mu\in Curr(F)$ be a current with full support and let $T\in
cv(F)$. Then:

\begin{enumerate}
\item For any $C>0$ the set
\[
\{\phi\in Out(F): \langle T, \phi \mu\rangle\le C\}
\]
is finite.

\item The set ${\mathcal S}_T(\mu)$ is a discrete subset of $\mathbb R_{\ge
0}$.

\item Suppose $\phi_n\in Out(F)$ is an infinite sequence of distinct
elements such that for some $\lambda_n\ge 0$ and some $\mu'\in
Curr(F)$ we have $\lim_{n\to\infty} \lambda_n \phi_n \mu =\mu'$.
Then $\lim_{n\to\infty} \lambda_n=0$.
\end{enumerate}

\end{thm}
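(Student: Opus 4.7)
The strategy is to establish part~(1) first via a coercivity estimate that is the main place where Theorem~\ref{thm:main} is used, after which parts~(2) and~(3) follow quickly. The central claim is: for a fixed free basis $\{a_1,\dots,a_N\}$ of $F$, there exists $c>0$ with $\langle T',\mu\rangle \ge c\sum_{i=1}^N \|a_i\|_{T'}$ for every $T'\in\overline{cv}(F)$. To prove this, observe that the ratio $\langle T',\mu\rangle / \sum_i\|a_i\|_{T'}$ is continuous on $\overline{cv}(F)$ and scale-invariant, so it descends to a continuous function on the compact space $\overline{CV}(F)$. Theorem~\ref{thm:main} combined with the full support assumption makes it strictly positive everywhere: if $\langle T',\mu\rangle=0$ for some $T'$, then $\partial^2 F = \mathrm{supp}(\mu) \subseteq L^2(T')$, which would force every translation length on $T'$ to vanish and contradict the nontriviality of the action. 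Compactness then supplies the uniform positive lower bound.

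For part~(1), suppose $\{\phi\in Out(F):\langle T,\phi\mu\rangle\le C\}$ is infinite and pick distinct $\phi_n$ inside it. The $Out(F)$-equivariance of the intersection form together with the coercivity estimate applied to $\phi_n^{-1}T\in cv(F)$ yields $c\,\|\phi_n^{-1}(a_i)\|_T = c\,\|a_i\|_{\phi_n^{-1}T} \le \langle T,\phi_n\mu\rangle \le C$, so $\|\phi_n^{-1}(a_i)\|_T \le C/c$ for every $i$ and $n$. Since $T$ is free simplicial with finite quotient graph $T/F$, only finitely many conjugacy classes of $F$ have $T$-translation length bounded by $C/c$ (namely those represented by the finitely many cyclically reduced closed paths of bounded length in $T/F$). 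By the standard fact that an outer automorphism of $F$ is determined up to finite ambiguity by the tuple of conjugacy classes of the images of a basis --- a combinatorial rigidity property of $F$ reducible via Whitehead's method to the finiteness of Nielsen-reduced basis tuples of bounded length --- the sequence $\{\phi_n^{-1}\}$ lies in a finite subset of $Out(F)$, contradicting distinctness.

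Part~(2) is immediate from (1) since $\mathcal S_T(\mu)\cap[0,C]$ is the image under $\phi\mapsto\langle T,\phi\mu\rangle$ of the finite set in~(1). For part~(3), assume $\lambda_n\not\to 0$; passing to a subsequence, $\lambda_n\ge\varepsilon>0$. Continuity of the intersection form on $\overline{cv}(F)\times Curr(F)$ gives $\langle T,\lambda_n\phi_n\mu\rangle\to\langle T,\mu'\rangle$, so $\langle T,\phi_n\mu\rangle = \lambda_n^{-1}\langle T,\lambda_n\phi_n\mu\rangle$ is bounded above, and~(1) forces the $\phi_n$ into a finite set, again contradicting distinctness. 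The main obstacle is the combinatorial step at the end of the proof of~(1): separating the clean coercivity estimate (which packages the role of Theorem~\ref{thm:main} and of full support) from the desired discreteness conclusion requires invoking the nontrivial --- though well-known --- rigidity property of bases of~$F$ alluded to above.
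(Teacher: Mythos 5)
Your treatment of parts (2) and (3), once (1) is available, matches the paper's. Your coercivity estimate for (1) is also essentially sound: since $\langle T',\mu\rangle>0$ for all $0\ne T'\in\overline{cv}(F)$ by Theorem~\ref{thm:main} and Lemma~\ref{lem:lam}, the \emph{reciprocal} ratio $\sum_i\|a_i\|_{T'}/\langle T',\mu\rangle$ is a genuinely continuous, scale-invariant function on $\overline{cv}(F)\setminus\{0\}$ (the ratio as you wrote it blows up where all the $a_i$ are elliptic, e.g.\ on the Bass--Serre tree of $\langle a_1\rangle\ast\cdots\ast\langle a_N\rangle$, so one should invert), and compactness of $\overline{CV}(F)$ yields the desired $c>0$ with $\langle T',\mu\rangle\ge c\sum_i\|a_i\|_{T'}$.

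The gap is in the final combinatorial step of (1). The ``standard fact'' you invoke --- that an element of $Out(F)$ is determined up to finite ambiguity by the tuple of conjugacy classes $([\phi(a_1)],\dots,[\phi(a_N)])$ --- is false for $N\ge 3$. Take $F_3=\langle a,b,c\rangle$ and $\phi_n\in Aut(F_3)$ defined by $a\mapsto a$, $b\mapsto b$, $c\mapsto a^nca^{-n}$. Each $\phi_n$ is an automorphism, fixes the conjugacy classes $[a],[b],[c]$, and the images $\phi_n$ in $Out(F_3)$ are pairwise distinct: if $\phi_n\phi_m^{-1}$ were inner, the conjugating element would centralize both $a$ and $b$ hence be trivial, forcing $a^{n-m}$ to centralize $c$, i.e.\ $n=m$. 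So the map $Out(F)\to\{\text{conjugacy-class tuples}\}$ has infinite fibers, and the bound $\|\phi_n^{-1}(a_i)\|_T\le C/c$ does not confine the $\phi_n^{-1}$ to a finite set as claimed. (Nor does it confine the trees $\phi_n^{-1}T$ to a compact subset of $CV(F)$: bounded translation lengths of a fixed basis do not give compactness in $CV(F)$.) The paper circumvents this entirely by using proper discontinuity of the $Out(F)$-action on $CV(F)$: a subsequence $[\phi_n^{-1}T]$ must accumulate on a boundary tree $[T_\infty]\in\partial CV(F)$, the rescaling constants must tend to $0$, and continuity of the intersection form then forces $\langle T_\infty,\mu\rangle=0$, contradicting Corollary~\ref{cor:full}. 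Your coercivity lemma is a nice repackaging of the role of Theorem~\ref{thm:main}, but to finish the argument you would still need to appeal to proper discontinuity (or some equivalent) rather than to the conjugacy-class rigidity you assert.
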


Francaviglia~\cite{Fra}, using very different methods, established
Theorem~\ref{thm:compact} for a certain class of currents with full
support, including uniform currents corresponding to free bases of $F$
and, more generally, Patterson-Sullivan currents corresponding to
points of $cv(F)$ (see~\cite{KN} for definitions).
Theorem~\ref{thm:compact} has applications to the ``ideal'' version of
the Whitehead algorithm for geodesic currents, as explained
in~\cite{Ka3}. Note that for $T\in cv(F)$ and $\mu\in Curr(F)$ that
does not have full support, the automorphic length spectrum ${\mathcal
  S}_T(\mu)$ need not be discrete. For example, if $\phi\in Out(F)$ is
an iwip which is atoroidal (that is, with no periodic conjugacy classes) then there exist
$\lambda>1$ and a nonzero ``eigencurrent'' $\eta\in Curr(F)$ such that
$\phi \eta=\lambda\eta$ (see \cite{Ma}). Then
$\phi^{-n}\eta=\frac{1}{\lambda^n}\eta$, so that for any $T\in cv(F)$
we have
\[
\langle T, \phi^{-n}\eta\rangle= \frac{1}{\lambda^n}\langle T,
\eta\rangle
\overset{n\to\infty}{\longrightarrow}
 0,
\]
and hence ${\mathcal S}_T(\eta)$ is not discrete.
A recent paper of R.~Sharp~\cite{Sharp} studies other dynamic-theoretic aspects related to generic stretching factors of free group automorphisms. A new paper of D.~Calegari and K.~Fujiwara~\cite{CF} investigates generalizations of generic stretching factors for different word metrics on word-hyperbolic groups.

We apply Theorem~\ref{thm:main} to establish ``unique ergodicity'' type properties for the attracting and repelling fixed points of atoroidal elements of $Out(F)$ in the compactified Outer Space and in the projectivized space of currents.

If $\phi\in Out(F)$ is an atoroidal iwip, then the (left) action of $\phi$ on $\overline{CV}(F)$ has exactly two distinct fixed points, an attracting fixed point $[T_+]$ and a repelling fixed point $[T_-]$ and similarly, the left action of $\phi$ on $\mathbb PCurr(F)$ has exactly two distinct fixed points, an attracting fixed point $[\mu_+]$ and a repelling fixed point $[\mu_-]$. In both cases every point distinct from the two fixed points lies on a ``North-South'' orbit with positive powers of $\phi$ making it converge to the attracting fixed point and with negative powers of $\phi$ making it converge to the repelling fixed point. These facts were established by Levitt and Lustig~\cite{LL} for the compactified Outer Space $\overline{CV}(F)$ and by  Reiner Martin~\cite{Ma} for $\mathbb PCurr(F)$.

In Section~\ref{sect:ue} we apply Theorem~\ref{thm:main} to establish unique-ergodicity type statements for $T_+$ and $\mu_+$. These results are summarized in the following:

\begin{thm}\label{thm:ns}
Let $\phi\in Out(F)$, where $N\ge 3$, be an atoroidal iwip. Let $[T_+]\in \overline{CV}(F)$ and $[\mu_+]\in \mathbb PCurr(F)$ be the attracting fixed points for the (left) actions of $\phi$ on $\overline{CV}(F)$ and $\mathbb PCurr(F)$ accordingly. Then:
\begin{enumerate}
\item If $[\mu]\in \mathbb PCurr(F)$ is such that $supp(\mu)\subseteq supp(\mu_+)$ then $[\mu]=[\mu_+]$.
\item If $[T]\in \overline{CV}(F)$ is such that $L^2(T_+)\subseteq L^2(T)$ then $[T]=[T_+]$.
\item Let $[\mu]\in \mathbb PCurr(F)$. Then $\langle T_+,\mu\rangle=0$ if and only if $[\mu]=[\mu_+]$.
\item Let $[T]\in \overline{CV}(F)$. Then $\langle T,\mu_+\rangle=0$ if and only if $[T]=[T_+]$.
\end{enumerate}
\end{thm}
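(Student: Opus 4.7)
The plan is to prove parts (3) and (4) first and then to deduce (1) and (2) from them via Theorem~\ref{thm:main}. For (3) and (4), the easy direction (``$\Leftarrow$'') will follow from a single scaling identity, while the hard direction (``$\Rightarrow$'') combines Theorem~\ref{thm:main} with the North-South dynamics of Levitt--Lustig on $\overline{CV}(F)$ and of Reiner Martin on $\mathbb PCurr(F)$.

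The first step is the key identity $\langle T_+, \mu_+\rangle = 0$. For an atoroidal iwip one has $\phi T_+ = \lambda_1 T_+$ and $\phi\mu_+ = \lambda_2 \mu_+$ with both $\lambda_1,\lambda_2 > 1$ (Levitt--Lustig and Reiner Martin). $Out(F)$-equivariance together with the bilinearity of the intersection form then give
\[
\langle T_+, \mu_+\rangle = \langle \phi T_+, \phi \mu_+\rangle = \lambda_1\lambda_2\, \langle T_+, \mu_+\rangle,
\]
which, since $\lambda_1\lambda_2 > 1$, forces $\langle T_+, \mu_+\rangle = 0$. Theorem~\ref{thm:main} then gives $supp(\mu_+)\subseteq L^2(T_+)$. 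This already establishes the ``$\Leftarrow$'' directions of (3) and (4), and provides the inclusion needed later to deduce (1) and (2).

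For the ``$\Rightarrow$'' directions the essential input is the pair of transversality inequalities $\langle T_+, \mu_-\rangle > 0$ and $\langle T_-, \mu_+\rangle > 0$. Granting these, to prove (3)~``$\Rightarrow$'', suppose $\langle T_+, \mu\rangle = 0$ with $[\mu]\neq [\mu_+]$. The case $[\mu] = [\mu_-]$ directly contradicts the first inequality. Otherwise, Martin's North-South dynamics on $\mathbb PCurr(F)$ produces $c_n > 0$ with $c_n\phi^{-n}\mu \to \mu_-$ in $Curr(F)$, and by equivariance
\[
\langle T_+, c_n \phi^{-n}\mu\rangle = c_n\lambda_1^n \langle T_+, \mu\rangle = 0 \quad \text{for all } n,
\]
while by the continuity of the intersection form proved in~\cite{KL4}, the left-hand side converges to $\langle T_+, \mu_-\rangle$, again contradicting the first inequality. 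A symmetric argument, using Levitt--Lustig's North-South dynamics on $\overline{CV}(F)$ and the second inequality, proves (4). Parts (1) and (2) then fall out: from $supp(\mu)\subseteq supp(\mu_+)\subseteq L^2(T_+)$ and Theorem~\ref{thm:main} one gets $\langle T_+, \mu\rangle = 0$, so (3) forces $[\mu]=[\mu_+]$; analogously (2) is reduced to (4).

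The principal obstacle is therefore to establish the transversality inequalities $\langle T_+, \mu_-\rangle > 0$ and $\langle T_-, \mu_+\rangle > 0$. By the contrapositive of Theorem~\ref{thm:main}, the first is equivalent to $supp(\mu_-)=L(\phi^{-1})\not\subseteq L^2(T_+)$. Here the atoroidal iwip hypothesis is essential: the attracting laminations $L(\phi)$ and $L(\phi^{-1})$ are minimal and mutually disjoint, while the Coulbois--Hilion--Lustig structural analysis of dual laminations shows that the only minimal sublamination of $L^2(T_+)$ is $L(\phi)=supp(\mu_+)$ (any further leaves being ``diagonal'' and asymptotic to $L(\phi)$). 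Since $L(\phi^{-1})$ is itself minimal and distinct from $L(\phi)$, it cannot lie inside $L^2(T_+)$, yielding the desired strict positivity; the second inequality follows by interchanging the roles of $\phi$ and $\phi^{-1}$.
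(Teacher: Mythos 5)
Your overall architecture is a valid reorganization of the paper's proof: the paper establishes parts (1) and (2) first (Theorems~\ref{thm:ue1} and \ref{thm:ue2}) and then proves (3) and (4) as a corollary by the same method, whereas you prove (3) and (4) first and deduce (1) and (2). In both cases the ``hard'' direction runs through the identical mechanism: Theorem~\ref{thm:main}, the Levitt--Lustig/Martin North-South dynamics, the continuity of the intersection form from \cite{KL4}, and the key facts $\langle T_\pm,\mu_\pm\rangle=0$ and $\langle T_\pm,\mu_\mp\rangle>0$. Your scaling derivation of $\langle T_+,\mu_+\rangle=0$ from $\langle T_+,\mu_+\rangle=\langle\phi T_+,\phi\mu_+\rangle=\lambda_1\lambda_2\langle T_+,\mu_+\rangle$ is a clean, self-contained alternative to the paper's route, which simply cites this vanishing from \cite{KL4} (Proposition~\ref{prop:ns}(3)). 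It is sound, since $\phi T_+=\lambda_{\phi^{-1}}T_+$ and $\phi\mu_+=\lambda_\phi\mu_+$ with both expansion factors strictly greater than $1$, so indeed $\lambda_1\lambda_2>1$.

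The gap is in your derivation of the transversality inequalities $\langle T_+,\mu_-\rangle>0$ and $\langle T_-,\mu_+\rangle>0$. You reduce these, via Theorem~\ref{thm:main}, to the assertion that the only minimal sublamination of $L^2(T_+)$ is $L(\phi)=supp(\mu_+)$, attributing this to ``the Coulbois--Hilion--Lustig structural analysis of dual laminations.'' That structural description of $L^2(T_+)$ as the diagonal closure of the stable lamination is not available in \cite{CHL1,CHL2,CHL3} in the form you need, and the paper itself explicitly declines to assert it: the last paragraph of Section~\ref{sect:ue} presents this description of $L^2(T_+)$ only as something the authors ``believe,'' not as an established fact. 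The paper sidesteps this entirely by citing $\langle T_+,\mu_-\rangle>0$ and $\langle T_-,\mu_+\rangle>0$ directly from \cite{KL4} in Proposition~\ref{prop:ns}(3), where they were proved by a different (length-estimate) argument rather than via the lamination structure of $T_+$. As written, your argument therefore rests on an unproved structural claim; replacing it with the citation to \cite{KL4} would close the gap and make your reorganized proof correct.
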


We apply Theorem~\ref{thm:main} together with Theorem~\ref{thm:ns} to show that for two ``sufficiently transverse'' trees from $\overline{cv}(F)$ the sum of their translation length functions is bilipschitz equivalent to one coming from an interior point of $cv(F)$.
For two functions $f_1,f_2:F\to\mathbb R_{\ge 0}$ we write $f_1\sim f_2$ if there exists $C\ge 1$ such that for every $w\in F$ we have
\[
\frac{1}{C}f_1(w)\le f_2(w)\le Cf_1(w).
\]

We prove:
\begin{thm}\label{thm:bl}
Let $T_1,T_2\in \overline{cv}(F)$ be such that there does not exist $\mu\in Curr(F)$, $\mu\ne 0$ such that $\langle T_1,\mu\rangle=\langle T_2,\mu\rangle=0$. Then for every $T\in cv(F)$ we have $||.||_{T_1}+||.||_{T_2}\sim ||.||_T$.
\end{thm}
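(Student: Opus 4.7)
The plan is to exploit compactness of the projectivized space of currents $\mathbb PCurr(F)$ together with the continuity of the intersection form $\langle\,\cdot\,,\,\cdot\,\rangle$ on $\overline{cv}(F)\times Curr(F)$ established in \cite{KL4}. The key device will be the function
\[
\Phi:\mathbb PCurr(F)\to \mathbb R, \qquad [\mu]\mapsto \frac{\langle T_1,\mu\rangle+\langle T_2,\mu\rangle}{\langle T,\mu\rangle}.
\]

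First, I would verify that $\Phi$ is well-defined and continuous. The ratio is independent of the chosen representative of $[\mu]$ because both numerator and denominator are homogeneous of degree one in $\mu$. The crucial point is the nonvanishing of the denominator: since $T\in cv(F)$ is free and simplicial, we have $L^2(T)=\emptyset$, so Theorem~\ref{thm:main} forces $\langle T,\mu\rangle>0$ for every nonzero current $\mu$. Continuity of $\Phi$ then follows from continuity of $\langle\,\cdot\,,\,\cdot\,\rangle$. Next, the hypothesis that no nonzero $\mu$ satisfies $\langle T_1,\mu\rangle=\langle T_2,\mu\rangle=0$ translates into the statement that $\Phi$ is strictly positive on all of $\mathbb PCurr(F)$. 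Since $\mathbb PCurr(F)$ is compact, $\Phi$ attains positive extrema $0<C_2\le C_1<\infty$, and hence
\[
C_2\,\langle T,\mu\rangle \le \langle T_1,\mu\rangle+\langle T_2,\mu\rangle \le C_1\,\langle T,\mu\rangle
\]
for every $\mu\in Curr(F)$.

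Finally, specializing to counting currents $\mu=\eta_w$ for $w\in F\setminus\{1\}$ and using the identity $\langle T',\eta_w\rangle=||w||_{T'}$ for all $T'\in\overline{cv}(F)$ yields
\[
C_2\,||w||_T \le ||w||_{T_1}+||w||_{T_2}\le C_1\,||w||_T,
\]
so $||.||_{T_1}+||.||_{T_2}\sim ||.||_T$ with constant $C:=\max(C_1,1/C_2)$. I expect no real obstacle beyond carefully justifying that the denominator $\langle T,\mu\rangle$ never vanishes, which is itself a direct application of Theorem~\ref{thm:main}; the result should therefore be viewed as a clean compactness consequence of the main theorem of this paper.
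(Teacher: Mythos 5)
Your proposal follows essentially the same argument as the paper: define the quotient function on nonzero currents, observe it descends to a continuous positive function on the compact space $\mathbb PCurr(F)$, extract extremal bounds, and specialize to counting currents. The only cosmetic difference is that you justify $\langle T,\mu\rangle>0$ via Theorem~\ref{thm:main} and $L^2(T)=\emptyset$, whereas the paper invokes the explicit formula in Proposition-Definition~\ref{pd:int-form}(2); both are valid.
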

\begin{proof}
  Let $T\in cv(F)$ be arbitrary. Then for any nonzero $\mu\in Curr(F)$
  we have $\langle T,\mu\rangle>0$. This follows from the explicit
  definition of the intersection form in
  Proposition-Definition~\ref{pd:int-form} for the case where $T\in
  cv(F)$.

 Define a function $\mathcal J: Curr(F)-\{0\}\to\mathbb R$ as
\[
\mathcal J(\mu):=\frac{\langle T_1,\mu\rangle+\langle T_2,\mu\rangle}{\langle T,\mu\rangle}, \qquad \mu\in Curr(F), \mu\ne 0.
\]
Note that for every $\mu\in Curr(F),\mu\ne 0$ we have $0<\mathcal J(\mu)<\infty$ by assumption on $T_1,T_2$. Also, the function $\mathcal J$ is continuous by construction since the intersection form is continuous. Moreover, it is easy to see that for every $\mu\ne 0$ and every $c>0$ we have $\mathcal J(\mu)=\mathcal J(c\mu)$. Thus $\mathcal J$ factors through to a continuous strictly positive function $\mathcal J':\mathbb PCurr(F)\to\mathbb R$. Since $\mathbb PCurr(F)$ is compact, the function $\mathcal J'$ achieves a positive maximum and a positive minimum. Hence there exist $0<C_1<C_2<\infty$ such that $C_1\le \mathcal J(\mu)\le C_2$ for every $\mu\in Curr(F),\mu\ne 0$. Applying this fact to rational currents $\eta_g$, $g\in F-\{1\}$ we conclude that $||.||_{T_1}+||.||_{T_2}\sim ||.||_T$, as required.
\end{proof}

In the terminology of \cite{KL4}, the assumption of Theorem~\ref{thm:bl} says that the distance between $[T_1]$ and $[T_2]$ in the intersection graph $\mathcal I(F)$ is bigger than two. Using Theorem~\ref{thm:main} together with Theorem~\ref{thm:ns} we obtain the following corollary of Theorem~\ref{thm:bl}

\begin{cor}
Let $N\ge 2$. Then the following hold:
\begin{enumerate}
\item Let $T_1,T_2\in \overline{cv}(F)$ be such that $L^2(T_1)\cap L^2(T_2)=\emptyset$. Then for any $T\in cv(F)$ we have  $||.||_{T_1}+||.||_{T_2}\sim ||.||_T$.

\item Let $N\ge 3$ and let $\phi\in Out(F)$ be an atoroidal iwip. Let $[T_+],[T_-]\in \overline{CV}(F)$ be the attracting and repelling fixed points of $\phi$. Then for every $T\in cv(F)$ we have $||.||_{T_+}+||.||_{T_-}\sim ||.||_T$.

\item Let $N\ge 3$ and let $\phi,\psi\in Out(F)$ be atoroidal iwips such that their fixed points $[T_\pm(\phi)],[T_\pm(\psi)]\in \overline{CV}(F)$  are four distinct points. Then for every $T\in cv(F)$ we have $||.||_{T_+(\phi)}+||.||_{T_+(\psi)}\sim ||.||_T$.
\item Let $T_1,T_2\in \overline{cv}(F)$ be discrete simplicial trees with trivial arc stabilizers (thus, algebraically, they correspond to graph of groups decompositions of $F$ with trivial edge groups). Suppose that every nontrivial elliptic element for $T_1$ is hyperbolic for $T_2$ and that every nontrivial elliptic element for $T_2$ is hyperbolic for $T_1$ (that is the intersection of every conjugate of a vertex group of $T_1$ with every conjugate of a vertex group of $T_2$ is trivial). Then for any $T\in cv(F)$ we have  $||.||_{T_1}+||.||_{T_2}\sim ||.||_T$.
\end{enumerate}
\end{cor}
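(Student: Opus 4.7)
The plan is to derive all four parts from Theorem~\ref{thm:bl} by verifying, in each case, its hypothesis that no nonzero $\mu\in Curr(F)$ satisfies $\langle T_1,\mu\rangle=\langle T_2,\mu\rangle=0$ for the pair of trees in question. For part~(1), if such a $\mu$ existed then by Theorem~\ref{thm:main} we would have $\emptyset\ne supp(\mu)\subseteq L^2(T_1)\cap L^2(T_2)=\emptyset$ (the support is nonempty because any nonzero Borel measure on the second countable space $\partial^2 F$ has nonempty support), a contradiction.

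For part~(2), suppose $\mu\neq 0$ satisfies $\langle T_+,\mu\rangle=\langle T_-,\mu\rangle=0$. Applying Theorem~\ref{thm:ns}(3) to $\phi$ yields $[\mu]=[\mu_+]$, while applying it to the atoroidal iwip $\phi^{-1}$ (whose attracting fixed points on $\overline{CV}(F)$ and $\mathbb PCurr(F)$ are $[T_-]$ and $[\mu_-]$ respectively) yields $[\mu]=[\mu_-]$. Since $[\mu_+]\neq [\mu_-]$, this is impossible. For part~(3), the same step forces $[\mu]=[\mu_+(\phi)]=[\mu_+(\psi)]$, so $\mu_+(\phi)$ is a positive scalar multiple of $\mu_+(\psi)$; hence $\langle T_+(\psi),\mu_+(\phi)\rangle=0$ by Theorem~\ref{thm:ns}(3) applied to $\psi$, and Theorem~\ref{thm:ns}(4) applied to $\phi$ then forces $[T_+(\psi)]=[T_+(\phi)]$, contradicting the hypothesis that the four fixed points are distinct.

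For part~(4), the strategy is once more to show $L^2(T_1)\cap L^2(T_2)=\emptyset$ and invoke Theorem~\ref{thm:main}. The key ingredient is the description $L^2(T)=\bigcup_{g,v} \partial^2\bigl(g\,\mathrm{Stab}_F(v)\,g^{-1}\bigr)$ for a minimal very small simplicial tree $T$ with trivial arc stabilizers, where $v$ ranges over vertices of $T$ and $g$ over $F$: any leaf of $L^2(T)$ is approximated by conjugacy classes of elements of bounded $T$-translation length, and under trivial edge stabilizers such elements must lie in conjugates of vertex stabilizers, so the associated bi-infinite word stays within a single vertex orbit. If $(\xi_1,\xi_2)\in L^2(T_1)\cap L^2(T_2)$, then $\xi_1,\xi_2\in \partial H\cap \partial K$ for some conjugate $H$ of a vertex stabilizer of $T_1$ and some conjugate $K$ of a vertex stabilizer of $T_2$. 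The hypothesis (as restated at the end of~(4)) gives $H\cap K=\{1\}$, and the classical identity $\partial H\cap \partial K=\partial(H\cap K)$ for finitely generated subgroups of a free group then yields $\partial H\cap \partial K=\emptyset$, the desired contradiction.

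The main obstacle I anticipate is justifying the precise description of $L^2(T)$ used in part~(4): this requires carefully unpacking the Coulbois--Hilion--Lustig definition of the dual algebraic lamination for a non-free simplicial action and ruling out ``exotic'' leaves beyond pairs of boundary points of vertex stabilizers. Once this description is in hand, parts~(1)--(3) reduce to brief applications of Theorems~\ref{thm:main}, \ref{thm:ns}, and \ref{thm:bl} as sketched above.
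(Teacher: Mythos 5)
Your proof is correct and follows essentially the same route as the paper: reduce each part to the hypothesis of Theorem~\ref{thm:bl} via Theorem~\ref{thm:main} (for parts (1) and (4)) or Theorem~\ref{thm:ns} (for parts (2) and (3)). Your worked-out arguments for (2) and (3) are slightly more detailed than the paper's, which simply observes that the four fixed currents $[\mu_\pm(\phi)],[\mu_\pm(\psi)]$ are distinct and applies Theorem~\ref{thm:bl}, but both arguments are the same in substance. On part (4), the obstacle you flag --- obtaining the explicit description of $L^2(T_i)$ for a simplicial $F$-tree with trivial edge stabilizers as the union over $F$-conjugates of $\partial^2$ of vertex stabilizers, with no exotic leaves --- is exactly what the paper's Lemma~\ref{lem:L2} supplies (via the Stallings subgroup graphs of the vertex groups), so no extra work is needed beyond citing it; with that in hand your use of $\partial H\cap\partial K=\partial(H\cap K)$ for finitely generated $H,K\le F$ completes the argument precisely as the paper intends.
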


\begin{proof}
Part (1) follows directly from Theorem~\ref{thm:main} together with
Theorem~\ref{thm:bl}. Parts (2) and (3) easily follows from
Theorem~\ref{thm:bl} together with Theorem~\ref{thm:main} and Theorem~\ref{thm:ns}. We will give an argument for part (3) for concreteness and leave part (2) to the reader. The assumptions of part (3) together with Theorem~\ref{thm:ns} imply that the fixed points $[\mu_\pm(\phi)],[\mu_\pm(\psi)]$ of $\phi,\psi$ in $\mathbb PCurr(F)$ are four distinct points. Hence, again, by Theorem~\ref{thm:ns}, there does not exist $\mu\ne 0$ such that $\langle T_+(\phi),\mu\rangle=\langle T_+(\psi),\mu\rangle=0$. Hence by part (1) the conclusion of part (3) follows.

To see that part (4) holds it is not hard to show, using the explicit description of $L^2(T_1)$ and $L^2(T_2)$ obtained in Lemma~\ref{lem:L2} that under the assumptions of part (4) we have  $L^2(T_1)\cap L^2(T_2)=\emptyset$. Hence part (1) of the corollary applies.
\end{proof}

Theorem~\ref{thm:main} has some interesting applications to the
notions of a filling element (or a conjugacy class) and of bounded
translation equivalence. The concept of a filling curve on a surface
plays an important role in the surface theory. However, until now
there was no clear analogue of this notion in the free group
context, mainly because of the absence of a symmetric notion of an
intersection number between two conjugacy classes (or between two
currents) for a free group. We propose a natural asymmetric notion
of a filling conjugacy class here. It is easy to see that the free
homotopy class of an essential closed curve on a closed  hyperbolic
surface fills the surface if and only if this class has a positive
intersection number with every measured lamination on the surface,
or equivalently, if and only if the corresponding element of the
surface group has positive translation length for the dual $\mathbb
R$-tree of every measured lamination. By analogy, we will say that
an element $g\in F, g\ne 1$ \emph{fills} $F$ if for every very
small isometric action of $F$ on an $\mathbb R$-tree $T$ we have
$||g||_T>0$. More generally, we say that a current $\mu\in
Curr(F)$ \emph{fills} $F$ if for every very small isometric
action of $F$ on an $\mathbb R$-tree $T$ we have $\langle
T,\mu\rangle
>0$. Thus an element $g\in F$ fills $F$ if and only if the
corresponding ``counting current'' $\eta_g\in Curr(F)$ fills $F$.
As was explained to the authors by Vincent Guirardel, the
results of his paper~\cite{Gui} can be used to show that an element
$g\in F$ fills $F$ if and only if for every very small
simplicial action of $F$ on an $\mathbb R$-tree $T$ we have
$||g||_T>0$. However, we do not use this fact in this paper and work
directly with the definition of a filling element given above.

Unlike in the surface case, it is not at all obvious that in a free group
$F$ filling elements exist and even less clear why being filling is
a typical behavior. In fact, there is no known simple (or even complicated) explicit combinatorial criterion which guarantees that a given element of a free group is filling. Vincent Guirardel and Gilbert Levitt
showed us a special construction for producing filling elements in
free groups using iterated commutators and high powers. However, one
would still like to understand why being filling is an essentially
generic property for elements of free groups. Theorem~\ref{thm:main}
provides such an explanation. This theorem easily implies that every
current on $F$ with full support fills $F$. While counting
currents of elements of $F$ never have full support, it turns out
that any multiple of a counting current that is ``sufficiently close''
to a current with full support does fill $F$:

\begin{cor}\label{cor:fill1}
Let $F$ be a finitely generated nonabelian free group. Let $\mu\in
Curr(F)$ be a current with full support. Then:

\begin{enumerate}
\item The current $\mu$ fills $F$.

\item Suppose $g_i\in F$, $\lambda_i\ge 0$ are such that
  \[
\mu=\lim_{i\to\infty} \lambda_i \eta_{g_i}.
\]
Then there is some $i_0\ge 1$ such that for every $i\ge i_0$ the
element $g_i$ fills $F$.
\end{enumerate}

\end{cor}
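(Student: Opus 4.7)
The strategy is to derive both parts from Theorem~\ref{thm:main}, combined with compactness of $\overline{CV}(F)$ and the joint continuity of the intersection form established in \cite{KL4}.

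For part (1), suppose for a contradiction that some $T\in \overline{cv}(F)$ satisfies $\langle T,\mu\rangle=0$. By Theorem~\ref{thm:main}, $supp(\mu)\subseteq L^2(T)$; since $\mu$ has full support, $supp(\mu)=\partial^2 F$, and hence $L^2(T)=\partial^2 F$. Because $T$ is a nontrivial minimal very small action of the nonabelian free group $F$, standard tree theory produces a $T$-hyperbolic element $g\in F$, i.e. one with $\|g\|_T>0$. A basic property of the dual lamination (recalled in Section~\ref{sect:laminations}) then shows that the pair of axis endpoints $(g^{-\infty},g^{+\infty})$ does not lie in $L^2(T)$, contradicting $L^2(T)=\partial^2 F$.

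For part (2), first apply (1) to obtain that $\mu$ itself fills $F$. Suppose the conclusion of (2) fails; then, after passing to a subsequence, we may assume $g_{i}$ does not fill $F$ for every $i$, so that for each $i$ there exists $T_i\in \overline{cv}(F)$ with $\|g_{i}\|_{T_i}=0$, whence $\langle T_i,\lambda_{i}\eta_{g_{i}}\rangle=\lambda_i\|g_i\|_{T_i}=0$. By compactness of $\overline{CV}(F)$, after passing to a further subsequence the projective classes $[T_i]$ converge to some $[T_\infty]\in \overline{CV}(F)$; rescaling each $T_i$ by a suitable positive constant we may assume $T_i\to T_\infty$ in $\overline{cv}(F)$, and this rescaling preserves the vanishing $\langle T_i,\lambda_{i}\eta_{g_{i}}\rangle=0$. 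Joint continuity of the intersection form then yields
\[
\langle T_\infty,\mu\rangle \;=\; \lim_{i\to\infty}\langle T_i,\lambda_{i}\eta_{g_{i}}\rangle \;=\; 0,
\]
contradicting part (1) applied to the nontrivial minimal very small action $T_\infty$.

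The only nontrivial input beyond Theorem~\ref{thm:main} is the fact used in part (1) that the axis endpoints of a $T$-hyperbolic element never lie in $L^2(T)$; this should be immediate from the definition of the dual lamination recalled in Section~\ref{sect:laminations}. Together with the routine care needed to choose compatible representatives of the $[T_i]$ in $\overline{cv}(F)$ in part (2), this is where the (small) technical work lies.
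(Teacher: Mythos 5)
Your proof is correct and takes essentially the same route as the paper: part (1) is the paper's Corollary~\ref{cor:full}, namely Theorem~\ref{thm:main} combined with the fact that $L^2(T)\ne\partial^2 F$ for a nontrivial very small action, and part (2) is exactly the paper's Proposition~\ref{prop:fill} and Corollary~\ref{cor:fill}, the compactness-of-$\overline{CV}(F)$ plus joint-continuity argument.

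The one spot where you understate the work is your claim that the axis endpoints $(g^{-\infty},g^{+\infty})$ of a $T$-hyperbolic $g$ avoid $L^2(T)$ ``immediately from the definition.'' That fact is true, but it is not immediate: the definition of $L^2(T)$ only requires that for each subsegment $v$ of the axis some cyclically reduced $w\supset v$ have small $\|w\|_T$, and without further input nothing a priori forbids $w$ from folding back so drastically in $T$ that $\|w\|_T$ is tiny even though $v$ represents a large translation. Ruling this out is exactly what the bounded back-tracking property (Proposition~\ref{prop:BBT} via Lemma~\ref{lem:LL}) gives, and that is precisely the content and proof of the paper's Lemma~\ref{lem:lam}. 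So your outline is sound, but this step needs the BBT argument rather than the bare definition.
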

Note that \emph{rational currents} (i.e. scalar multiples of
counting currents) are dense in $Curr(F)$, so that any current
$\mu$ with full support can be approximated by rational currents.
Corollary~\ref{cor:fill1} in turn implies that if $A$ is a free
basis of $F$ and $\xi$ is a random right-infinite freely reduced
word over $A^{\pm 1}$, then all sufficiently long initial segments
of $\xi$ give filling elements in $F$:

\begin{cor}\label{cor:fill2}
Let $F=F(A)$ be a finitely generated nonabelian free group with a
free basis $A$. Let $\mu_A$ be the uniform measure on $\partial F$
corresponding to $A$. Then there exists a set $R\subseteq \partial
F$ with the following properties:

\begin{enumerate}
\item We have $\mu_A(R)=1$.
\item For each $\xi\in R$ there is $n_0\ge 1$ such that for every $n\ge
  n_0$ the element $\xi_A(n)\in F$ fills $F$.
\end{enumerate}
\end{cor}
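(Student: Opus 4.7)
The plan is to deduce the corollary from Corollary~\ref{cor:fill1}(2) by showing that for $\mu_A$-almost every $\xi \in \partial F$, the sequence of rational currents $\frac{1}{n}\eta_{\xi_A(n)}$ converges in $Curr(F)$ to a positive multiple of the uniform current $\nu_A$. Since $\nu_A$ has full support by definition, Corollary~\ref{cor:fill1}(2) will then directly produce, for each such $\xi$, an integer $n_0(\xi)$ beyond which $\xi_A(n)$ fills $F$. The set $R$ will be taken to be the set of $\xi \in \partial F$ on which this convergence holds.

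First I would set up a countable family of test functions for the weak-$*$ topology on $Curr(F)$: namely, evaluation on the cylinder sets $Cyl_A(v)$, indexed by freely reduced words $v$ over $A^{\pm 1}$. For $g \in F-\{1\}$, the value $\eta_g(Cyl_A(v))$ equals (up to the usual normalizations) the number of cyclic occurrences of $v^{\pm 1}$ in the cyclic reduction of $g$. For $g = \xi_A(n)$, the number of linear occurrences of $v$ in $\xi_A(n)$ is exactly the Birkhoff sum $\sum_{k=0}^{n-|v|} \mathbf{1}_{Cyl_A(v)}(\sigma^k \xi)$ of the indicator of $Cyl_A(v)$ under the shift $\sigma$ on $\partial F$. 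Since $\sigma$ is Bernoulli, hence ergodic, with respect to $\mu_A$, Birkhoff's ergodic theorem gives, for $\mu_A$-almost every $\xi$, convergence of $\frac{1}{n}\eta_{\xi_A(n)}(Cyl_A(v))$ to $c \cdot \nu_A(Cyl_A(v))$ for a universal positive constant $c$ (depending only on normalizations, not on $v$). The $O(1)$ discrepancy between linear and cyclic counts vanishes after dividing by $n$.

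Since the family of freely reduced words over $A^{\pm 1}$ is countable, intersecting the full-measure sets obtained above over all $v$ yields a set $R \subseteq \partial F$ with $\mu_A(R) = 1$ such that for every $\xi \in R$ one has $\frac{1}{n}\eta_{\xi_A(n)} \to c\,\nu_A$ in $Curr(F)$. For $\xi \in R$ the hypothesis of Corollary~\ref{cor:fill1}(2) is satisfied with $\mu = c\,\nu_A$, and since $c\,\nu_A$ has full support the corollary furnishes an $n_0 = n_0(\xi)$ such that $\xi_A(n)$ fills $F$ for every $n \ge n_0$.

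The main obstacle, and the only nontrivial step, will be the dictionary between the combinatorial definition of $\eta_g$ and the dynamical Birkhoff sums above: one must verify that $\eta_g(Cyl_A(v))$ really does coincide up to an $O(1)$ error with the number of linear occurrences of $v^{\pm 1}$ in $g$, check that the normalizing constant relating the limits of these averages to $\nu_A$ is independent of $v$, and confirm that weak-$*$ convergence of currents is indeed detected by the countable family of cylinder evaluations. Once these standard but slightly delicate verifications are in place, the corollary follows immediately from Birkhoff's theorem combined with Corollary~\ref{cor:fill1}(2).
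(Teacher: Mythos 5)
Your proof is correct and follows the same route as the paper: establish that $\frac{1}{n}\eta_{\xi_A(n)}$ converges for $\mu_A$-a.e.\ $\xi$ to a positive multiple of the uniform current $\nu_A$, which has full support, and then apply Corollary~\ref{cor:fill1}(2). The only difference is that the paper cites \cite{Ka2} for this almost-everywhere convergence as a known fact, while you re-derive it via Birkhoff's ergodic theorem applied to the shift on $(\partial F,\mu_A)$; that is precisely the underlying argument, and your care about the (irrelevant) normalizing constant and about checking convergence on the countable family of cylinders is appropriate.
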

Here $\xi_A(n)\in F$ denotes the element of $F$ corresponding to
the initial segment of $\xi$ of length $n$. The \emph{uniform
measure} $\mu_A$ on $\partial F$ is a Borel probability measure (see
Definition~\ref{defn:ma} for a precise definition), such that a
$\mu_A$-random point of $\partial F$ corresponds to the intuitive
notion of a ``random'' right-infinite freely reduced word over $A$.

In \cite{KLSS} Kapovich, Levitt, Schupp and Shpilrain introduced the
notion of \emph{translation equivalence} in free groups, motivated
by the notions of hyperbolic equivalence and simple intersection
equivalence for curves in surfaces (see \cite{Le}). Recall that two elements
$g,h\in F$ are \emph{translation equivalent in $F$} if for every
very small action of $F$ on an $\mathbb R$-tree $T$ we have
$||g||_T=||h||_T$. The paper~\cite{KLSS} exhibited several sources
of translation equivalence and additional ones were found by Donghi
Lee~\cite{Lee}. The following is a natural generalization of the
notion of translation equivalence. We say that $g,h\in F$ are
\emph{boundedly translation equivalent in $F$}, denoted $g\equiv_b
h$, if there exists $C>0$ such that for for every very small action
of $F$ on an $\mathbb R$-tree $T$ we have
\[
\frac{1}{C} ||h||_T\le ||g||_T\le C ||h||_T.
\]
The following statement, together with Corollary~\ref{cor:fill1} and
Corollary~\ref{cor:fill2}, explains why, unlike translation
equivalence, bounded translation equivalence is an essentially
generic phenomenon in free groups:

\begin{cor}\label{cor:bte}
Let $g,h\in F$ be elements such that each of them fills $F$.
Then $g\equiv_b h$ in $F$.
\end{cor}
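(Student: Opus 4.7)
The plan is a straightforward compactness argument that leverages the continuity of the intersection form on $\overline{cv}(F)\times Curr(F)$ together with compactness of the projectivized compactified Outer space $\overline{CV}(F)=\mathbb{P}\overline{cv}(F)$. Since $g$ and $h$ both fill $F$, the translation length functions $T\mapsto ||g||_T$ and $T\mapsto ||h||_T$ are strictly positive on all of $\overline{cv}(F)$; recalling that $||g||_T=\langle T,\eta_g\rangle$, this is exactly what the filling hypothesis guarantees.

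First I would define $f:\overline{cv}(F)\to\mathbb{R}$ by
\[
f(T):=\frac{||g||_T}{||h||_T}=\frac{\langle T,\eta_g\rangle}{\langle T,\eta_h\rangle}.
\]
Because $h$ fills $F$ the denominator is nowhere zero, so $f$ is well-defined, and continuity of the intersection form makes $f$ continuous. Moreover $f$ is scale-invariant in $T$: for every $c>0$ and every $T\in\overline{cv}(F)$ one has $||g||_{cT}=c||g||_T$ and $||h||_{cT}=c||h||_T$, so $f(cT)=f(T)$. Hence $f$ descends to a continuous, strictly positive function $\bar f:\overline{CV}(F)\to\mathbb{R}_{>0}$.

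Next I would invoke compactness of $\overline{CV}(F)$, which forces $\bar f$ to attain a positive maximum $C_2$ and a positive minimum $C_1$. Pulling back to $\overline{cv}(F)$ gives $C_1\,||h||_T\le ||g||_T\le C_2\,||h||_T$ for every $T\in\overline{cv}(F)$. Since translation lengths of a (not necessarily minimal) very small isometric $F$-action coincide with those of its minimal invariant subtree, the same bounds hold for every very small $F$-tree. Setting $C:=\max\{C_2,1/C_1\}$ then yields $\frac{1}{C}||h||_T\le ||g||_T\le C\,||h||_T$ for every such $T$, which is exactly $g\equiv_b h$.

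There is no real obstacle here beyond checking continuity and scale-invariance, both of which are immediate from the properties of the intersection form established in \cite{KL4}; the corollary is essentially the observation that once the ratio $||g||_T/||h||_T$ is a well-defined continuous function on a compact space, it is automatically pinched between two positive constants. Note that Theorem~\ref{thm:main} itself is not invoked in this argument (it enters only through Corollaries~\ref{cor:fill1} and~\ref{cor:fill2}, which provide the abundant supply of filling elements making Corollary~\ref{cor:bte} nontrivial).
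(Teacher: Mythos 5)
Your proof is correct and is essentially identical to the paper's argument (Proposition~\ref{prop:bta}): define the ratio $||g||_T/||h||_T$, observe it is well-defined and strictly positive because both elements fill, note continuity and scale-invariance so it descends to $\overline{CV}(F)$, and use compactness to pinch the ratio between positive constants. The only differences are cosmetic elaborations (explicitly spelling out scale-invariance and the reduction to minimal subtrees).
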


Note that while Corollary~\ref{cor:fill1} and
Corollary~\ref{cor:fill2} do explain why filling elements in $F$
are plentiful, they do not provide an explicit sufficient condition
for an element to be filling. Finding such a sufficient condition
that would be easily algorithmically verifiable and that would hold
for ``generic'' elements of $F$ remains an interesting problem. The
construction of Guirardel and Levitt mentioned above is both
explicit and algorithmic, but it relies on using iterated
commutators and large powers and thus is highly non-generic.

The paper is organized as follows. In Section~2 we review basic
definitions and notations related to outer space and geodesic
currents. In Section~3 we present background information regarding
algebraic laminations on free groups, introduce the notions of the
support of a current and the ``dual lamination'' associated with an
action of a free group on an $\mathbb R$-tree and establish some basic
facts about laminations and subgroups. In Section~4 we discuss the
``bounded back-tracking'' property for very small actions of $F$ on
$\mathbb R$-trees and its consequences. The bounded back-tracking
property for very small actions, established in \cite{GJLL}, is a key
tool in the present paper. In Section~5 we prove the ``if'' direction
of the main result, Theorem~\ref{thm:main}. Namely, in Theorem~\ref{thm:main2} we prove that if $supp(\mu)\subseteq
L^2(T)$ then $\langle T,\mu\rangle=0$. The ``only if'' direction of
Theorem~\ref{thm:main} turns out to be more difficult and requires
considering several different types of trees $T$ separately. In
Section~6 we establish the ``only if'' direction of
Theorem~\ref{thm:main} for the case of a tree $T\in \overline{cv}(F)$
with dense orbits. The case of a discrete $T\in \overline{cv}(F)$ is
dealt with in Section~7 and Section~8. Before dealing with the general
case, we develop some new machinery for restricting geodesic currents
to subgroups of $F$ in Section~9. In Section~10 we establish the
``only if'' direction of Theorem~\ref{thm:main} in the ``mixed'' or
general case of an arbitrary $T\in \overline{cv}(F)$. This is done in
Theorem~\ref{thm:main1} which completes the proof of Theorem~\ref{thm:main}
In Section~11 we apply the main result to prove
Theorem~\ref{thm:compact} about length spectrum compactness
(Theorem~\ref{thm:comp} in Section 11). In Section~12 we establish the
unique-ergodicity type results stated in Theorem~\ref{thm:ns}
above. In Section~13 we obtain applications of Theorem~\ref{thm:main}
to filling elements, filling currents and bounded translation
equivalence, stated in Corollary~\ref{cor:fill1},
Corollary~\ref{cor:fill2} and Corollary~\ref{cor:bte} above.

We are grateful to Vincent Guirardel, Gilbert Levitt, Pascal Hubert
and Chris Leininger for helpful comments and conversations. We are
also grateful to the referee for an extremely careful and detailed referee report
and for numerous useful suggestions.

\section{Geodesic currents}\label{sect:currents}

\begin{conv}\label{conv}
For the remainder of the article let $F$ be a finitely generated
nonabelian free group.
\end{conv}

Let $\partial F$ be the hyperbolic boundary of $F$ (see \cite{GH}
for background information about word-hyperbolic groups). We denote
\[
\partial^2F=\{(\xi_1,\xi_2): \xi_1,\xi_2\in \partial F, \text{ and }
\xi_1\ne \xi_2\}.
\]
Also denote by $\sigma_F: \partial^2 F\to \partial^2 F$ the ``flip''
map defined as $\sigma_F:(\xi_1,\xi_2)\mapsto (\xi_2,\xi_1)$ for
$(\xi_1,\xi_2)\in
\partial^2F$.

\begin{defn}[Simplicial charts]
A \emph{simplicial chart} on $F$ is an isomorphism $\alpha:
F\to\pi_1(\Gamma,x)$ where $\Gamma$ is a finite connected graph
without degree-one vertices and where is a vertex of $\Gamma$.
\end{defn}

From now on, when discussing graphs, for a graph $\Gamma$ we will
denote the set of vertices of $\Gamma$ by $V\Gamma$.

If $\alpha$ is a simplicial chart on $F$, it defines an
$F$-equivariant quasi-isometry between $F$ (with any word metric) and
$\widetilde \Gamma$, with the simplicial metric, that is where every
edge has length 1. Correspondingly, we
get canonical $F$-equivariant homeomorphisms $\widetilde \alpha:
\partial F\to \partial \widetilde \Gamma$ and $\widehat \alpha:
\partial^2 F\to \partial^2 \widetilde \Gamma$, that do not depend on
the choice of a word metric for $F$.  If $\alpha$ is fixed, we will
usually use these homeomorphisms to identify $\partial F$ with
$\partial \widetilde \Gamma$ and $\partial^2 F$ with $\partial^2
\widetilde \Gamma$ without additional comment.

\begin{defn}[Cylinders]
Let $\alpha: F\to\pi_1(\Gamma,x)$ be a simplicial chart on $F$. For
a nontrivial reduced edge-path $\gamma$ in $\widetilde \Gamma$
denote by $Cyl_{\widetilde \Gamma}(\gamma)$ the set of all
$(\xi_1,\xi_2)\in
\partial^2 F$ such that the bi-infinite geodesic from $\tilde
\alpha(\xi_1)$ to $\tilde \alpha(\xi_2)$ contains $\gamma$ as a
subpath.

We call $Cyl_{\widetilde \Gamma}(\gamma)\subseteq \partial^2 F$ the
\emph{two-sided cylinder corresponding to $\gamma$}.

It is easy to see that $Cyl_{\widetilde \Gamma}(\gamma)\subseteq
\partial^2 F$ is both compact and open. Moreover, the collection of
all such cylinders, where $\gamma$ varies over all nontrivial reduced
edge-paths in $\widetilde \Gamma$, forms a basis of open sets in
$\partial^2 F$.
\end{defn}

\begin{defn}[Geodesic currents]\label{defn:current}
A \emph{geodesic current} on $F$ is a positive Radon measure (that is
a Borel measure which is finite on compact sets) on
$\partial^2 F$ that is $F$-invariant and $\sigma_F$-invariant. The
set of all geodesic currents on $F$ is denoted by $Curr(F)$. The set
$Curr(F)$ is endowed with the weak topology which makes it into a
locally compact space.

Specifically, let $\alpha: F\to\pi_1(\Gamma,x)$ be a simplicial
chart on $F$. Let $\mu_n,\mu\in Curr(F)$. It is not hard to
show~\cite{Ka2} that $\lim_{n\to\infty}\mu_n=\mu$ in $Curr(F)$ if
and only if for every nontrivial reduced edge-path $\gamma$ in
$\widetilde\Gamma$ we have
\[
\lim_{n\to\infty} \mu_n(Cyl_{\widetilde
\Gamma}(\gamma))=\mu(Cyl_{\widetilde \Gamma}(\gamma)).
\]

Let $\mu\in Curr(F)$ and let $v$ be a nontrivial reduced edge-path
in $\Gamma$. Denote
\[
\langle v, \mu\rangle_{\alpha}:=\mu(Cyl_{\widetilde
\Gamma}(\gamma)),
\]
where $\gamma$ is any edge-path in $\widetilde \Gamma$ that is
labelled by $v$ (i.e. which is a lift of $v$ to $\widetilde\Gamma$).  Since $\mu$ is $F$-invariant, this definition does
not depend on the choice of a lift $\gamma$ of $v$.
\end{defn}

\begin{notation}
Let $A=\{a_1,\dots, a_k\}$ be a free basis of $F$ and let $\alpha$ be the simplicial
chart on $F$ corresponding to $A$. That is, $\alpha:F\to
\pi_1(\Gamma,x)$, where $\Gamma$ is a wedge of $k$ loop-edges at a
single vertex $x$, where the edges are labelled by $a_1,\dots
a_k$. The map $\alpha$ sends a freely reduced word $v\in F(A)$ to the
edge-path in $\Gamma$ labelled by $v$. Then $\widetilde \Gamma=X(F,A)$
is the Cayley graph of $F$ with respect to $A$.

In this case, for $v\in F(A)$ and $\mu\in Curr(F)$, we will denote $\langle
\alpha(v),\mu\rangle_\alpha$ by $\langle v, \mu \rangle_A$.
\end{notation}

\begin{notation} For any $g\in F, g\ne 1$ denote $g^{\infty}=\lim_{n\to\infty}
g^n$ and $g^{-\infty}=\lim_{n\to\infty} g^n$ so that
$(g^{-\infty},g^{\infty})\in \partial^2F$.

Also, for any $g\in F$ we will denote by $[g]_F$ or just by $[g]$
the conjugacy class of $g$ in $F$.
\end{notation}

\begin{defn}[Counting and Rational Currents]\label{defn:rational}
Let $g\in F$ be a nontrivial element that is not a proper power in
$F$. Put
\[
\eta^F_g=\sum_{h\in [g]_F}
\left(\delta_{(h^{-\infty},h^{\infty})}+\delta_{(h^{\infty},h^{-\infty})}\right).
\]

Let $\mathcal R(g)$ be the collection of all $F$-translates of
$(g^{-\infty},g^{\infty})$ and $(g^{\infty},g^{-\infty})$ in
$\partial^2 F$. Note that if $u\in F$ and $h=ugu^{-1}$ then
$u(g^{-\infty}, g^\infty)=(h^{-\infty}, h^\infty)$.  Therefore
\[
\eta^F_g=\sum_{(x,y)\in \mathcal R(g)} \delta_{(x,y)},
\]
and hence $\eta^F_g$ is $F$-invariant and flip-invariant, that is
$\eta_g\in Curr(F)$.

Let $g\in F$ be an arbitrary nontrivial element. Write $g=f^m$ where
$m\ge 1$ and $f\in F$ is not a proper power. Put
$\eta^F_g:=m\eta^F_f$.

We call $\eta^F_g\in Curr(F)$ the \emph{counting current
corresponding to $g$}. Positive scalar multiples of counting
currents are called \emph{rational currents}. If the ambient group
$F$ is fixed, we will often denote $\eta^F_g$ by $\eta_g$.
\end{defn}

It is easy to see that if $[g]_F=[h]_F$ then $\eta^F_g=\eta^F_h$ and
$\eta^F_g=\eta^F_{g^{-1}}$.

The following statement is an important basic fact regarding rational currents:
\begin{prop}\cite{Ka1,Ka2}
The set of all rational currents is dense in the space  $Curr(F)$.
\end{prop}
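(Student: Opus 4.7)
The plan is to use the explicit characterization of the weak topology given in Definition~\ref{defn:current}: a sequence $\mu_n$ converges to $\mu$ in $Curr(F)$ if and only if $\mu_n(Cyl_{\widetilde\Gamma}(\gamma))\to \mu(Cyl_{\widetilde\Gamma}(\gamma))$ for every nontrivial reduced edge-path $\gamma$. Fix a free basis $A$ and the associated simplicial chart, so $\widetilde\Gamma$ is the Cayley graph $X(F,A)$. It suffices to show that for every $\mu\in Curr(F)$, every $\varepsilon>0$, and every finite collection of reduced words $v_1,\ldots,v_k$ in $F(A)$, there is a rational current $\nu$ with $|\langle v_i,\nu\rangle_A - \langle v_i,\mu\rangle_A|<\varepsilon$ for all $i$. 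By choosing $n$ larger than the lengths of all $v_i$ and using the fact that $\mu(Cyl(v))$ splits as a finite sum $\sum_w \mu(Cyl(vw))$ over length-$n$ extensions, it is enough to approximate simultaneously the values $p_v:=\mu(Cyl_{\widetilde\Gamma}(v))$ for all reduced words $v$ of length $n$.

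Next I would encode these values into a weighted directed graph $D_n$ (a de Bruijn-type graph): vertices are reduced words of length $n-1$ over $A^{\pm1}$, and for each reduced word $v$ of length $n$ there is an edge from its length-$(n-1)$ prefix to its length-$(n-1)$ suffix, carrying weight $p_v$. The $F$-invariance of $\mu$ yields Kirchhoff flow conservation at every vertex, namely
\[
\sum_{a\colon wa \text{ reduced}} p_{wa} = \sum_{b\colon bw \text{ reduced}} p_{bw}
\]
for every length-$(n-1)$ word $w$; and the $\sigma_F$-invariance gives the symmetry $p_v=p_{v^{-1}}$.

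The core of the argument is then a rational Eulerian circuit construction. I would approximate the real weights $p_v$ by rationals $q_v/N$ with $N$ large, while preserving exactly both the flow conservation equations and the symmetry $q_v = q_{v^{-1}}$; this is possible because the affine subspace cut out by these linear equations has a dense set of rational points, and the non-negative cone is full-dimensional inside it. Clearing denominators, the graph $D_n$ with integer edge-multiplicities $q_v$ has balanced in- and out-degrees, so (after restricting to its connected component containing the support) it admits an Eulerian circuit. Reading off the labels along this circuit produces a cyclic word representing a conjugacy class $[g]$ in $F$ such that for every reduced word $v$ of length $n$, the number of occurrences of $v$ (or its inverse) as a subword of the cyclic word $g$ equals $q_v$; by the definition of the counting current, $\langle v,\eta_g\rangle_A = q_v$. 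Then $\nu = \tfrac1N\,\eta_g$ is a rational current with $\langle v,\nu\rangle_A = q_v/N$, which by construction is within $\varepsilon$ of $p_v$ for every length-$n$ word $v$, hence within $\varepsilon$ of $\langle v_i,\mu\rangle_A$ for the original $v_i$ as required.

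The main obstacle is the rational approximation step: one must maintain nonnegativity, flow conservation, and the flip symmetry simultaneously and exactly, so that an honest Eulerian circuit argument (rather than just an approximate one) applies. A convenient way to handle the flip symmetry is to work with the symmetrized word $gg^{-1}$-cycle, or equivalently to observe that $\eta_g$ is automatically $\sigma_F$-invariant, so that the Eulerian circuit on $D_n$ need only be constructed up to the natural $\mathbb Z/2$-action by edge-inversion, with the flip-invariance of the resulting counting current taken care of by the definition of $\eta_g$ itself.
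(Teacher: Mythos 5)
The paper does not actually prove this proposition; it cites \cite{Ka1,Ka2}, so there is no in-text proof to compare against. That said, your de Bruijn/Eulerian-circuit argument is the standard way to prove density of periodic-orbit (here: rational) measures for an irreducible subshift of finite type, and it is essentially the route taken in the cited sources. The overall structure --- reduce to matching finitely many length-$n$ cylinder weights, encode them as a flow on the directed graph $D_n$ of reduced $(n-1)$-windows, rationalize the flow, and read a cyclic word off an Eulerian circuit --- is right, as is your observation that the flip symmetry can either be enforced at the approximation stage or absorbed afterwards because $\eta_g$ is $\sigma_F$-invariant by definition (the closing paragraph is a bit muddled in phrasing, but the content is correct, and the resulting factor of $2$ is harmless since you only need a scalar multiple $\tfrac{1}{N}\eta_g$, not $\eta_g$ itself).

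There is one genuine gap: the Eulerian-circuit step requires the multigraph with integer multiplicities $q_v$ to be \emph{connected} on its support, and you dispose of this by ``restricting to the connected component containing the support.'' But the support of $\mu$, and hence of any rational approximant $q$, may meet several connected (indeed, several strongly connected) components of $D_n$; in that case a single Eulerian circuit cannot realize all the prescribed multiplicities, and a disjoint union of circuits does not give a single counting current $\eta_g$. The standard repair is to use that the full de Bruijn graph $D_n$ on all reduced words of length $n-1$ is \emph{strongly connected} (for any two such words $w,w'$ one can find a reduced $z$ of length at most $2$ with $wzw'$ reduced, since the rank is at least $2$), and that every vertex of $D_n$ has equal in- and out-degree $2k-1$: after rationalizing $p_v$ to $q_v/N$ subject to flow conservation, replace $q_v$ by $q_v+\delta$ for a small positive rational $\delta$ with common denominator $N$. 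This preserves flow conservation exactly, keeps the symmetry if you started with it, keeps nonnegativity, stays within the desired $\varepsilon$-tolerance of $p_v$, and makes every edge weight strictly positive, so the support is all of $D_n$ and a single Eulerian circuit exists. With that modification your argument closes up; without it, the proof as written does not produce a single rational current approximating $\mu$.
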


\begin{defn}[Cyclic paths and cyclic words]
  A \emph{cyclic path} or \emph{circuit} in $\Gamma$ is an immersion
  graph-map $c:\mathbb S\to \Gamma$ from a simplicially subdivided
  oriented circle $\mathbb S$ to $\Gamma$. Let $u$ be an edge-path in
  $\Gamma$. An \emph{occurrence of $u$ in $c$} is a vertex of $\mathbb
  S$ such that, going from this vertex in the positive direction along
  $\mathbb S$, there exists an edge-path in $\mathbb S$ (not
  necessarily simple and not necessarily closed) which is labelled by
  $u$, that is, which is mapped to $u$ by $c$. We denote by $\langle
  u,c\rangle$ the number of occurrences of $u$ in $c$.

  If $A$ is a free basis of $F$ and $\Gamma$ is a bouquet of edges
  labelled by the elements of $A$, then a cyclic path in $\Gamma$ can
  also be thought of as a \emph{cyclic word} over $A$. A \emph{cyclic
    word} is a cyclically reduced word in $F(A)$ written on a simplicially subdivided
    circle (where every positively oriented edge is labelled by an
    element of $A$) in the clockwise-direction without a specified
    base-point. The number of occurrences of $v\in F(A)$ in a cyclic
    word $w$ over $A$ is denoted $\langle v,w\rangle_A$.
  \end{defn}

Let $\alpha:F\to \pi_1(\Gamma,x)$ be a simplicial chart for $F$.
Then every nontrivial conjugacy class $[g]_F$ is represented by a
unique reduced cyclic edge-path $w_g$ in $\Gamma$.

If $w$ is a cyclic edge-path in $\Gamma$ and $v$ is an edge-path in
$\Gamma$, denote by $\langle v,w\rangle_\alpha$ the number of
occurrences of $v$ in $w$. We will also occasionally use the
notation $\langle v^{\pm 1},w\rangle_\alpha:=\langle
v,w\rangle_\alpha+\langle v^{-1},w\rangle_\alpha$.

It is not hard to see that the definition of a counting current can
be reinterpreted as follows~\cite{Ka2}:

\begin{lem}
Let $\alpha:F\to \pi_1(\Gamma,x)$ be a simplicial chart for $F$. Let
$g\in F$ be a nontrivial element and let $w_g$ be the reduced cyclic
path in $\Gamma$ representing $[g]_F$.

Then for every reduced edge-path $v$ in $\Gamma$ we have
\[
\langle v, \eta_g\rangle_\alpha=\langle v^{-1},
\eta_g\rangle_\alpha=\langle v, w_g\rangle_\alpha+\langle
v^{-1},w_g\rangle_\alpha=\langle v^{\pm 1},w\rangle_\alpha.
\]
\end{lem}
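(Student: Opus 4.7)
The plan is to unpack the definition of $\eta_g$ as a flip-symmetric sum of Dirac masses over $\mathcal R(g)$ and, after fixing a lift $\tilde v$ of $v$ in $\widetilde\Gamma$, to read off the cylinder measure $\eta_g(Cyl_{\widetilde\Gamma}(\tilde v))$ combinatorially from the cyclic path $w_g$.

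First I would reduce to the case where $g$ is not a proper power. Writing $g=f^m$ with $f$ not a proper power, both sides of the claimed identity scale linearly in $m$: by definition $\eta_g=m\eta_f$, and $w_g$ traces $w_f$ exactly $m$ times, so $\langle v,w_g\rangle_\alpha=m\langle v,w_f\rangle_\alpha$. Assuming from now on that $g$ is not a proper power and that $\tilde v$ is a fixed lift of $v$, the definition of $\eta_g$ gives
\[
\langle v,\eta_g\rangle_\alpha=\eta_g(Cyl_{\widetilde\Gamma}(\tilde v))=\#\bigl\{(x,y)\in\mathcal R(g)\,:\,(x,y)\in Cyl_{\widetilde\Gamma}(\tilde v)\bigr\}.
\]
For each $h\in[g]_F$ with axis $L_h$ oriented by the translation of $h$, the pair $(h^{-\infty},h^\infty)$ lies in the cylinder exactly when $L_h\supset\tilde v$ with matching orientation, while the flipped pair $(h^\infty,h^{-\infty})$ lies in the cylinder exactly when $L_h\supset\tilde v^{-1}$ with matching orientation.

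The technical heart of the argument is then a bijection
\[
\{h\in[g]_F\,:\,\tilde v\subset L_h\ \text{with matching orientation}\}\longleftrightarrow\{\text{occurrences of }v\text{ in }w_g\}.
\]
Forward: projecting the inclusion $\tilde v\subset L_h$ via $\widetilde\Gamma\to\Gamma$ yields an occurrence of $v$ in $w_h=w_g$. Inverse: each occurrence specifies a phase modulo the length of $w_g$, and there is a unique biinfinite line in $\widetilde\Gamma$ extending $\tilde v$ whose projection periodically realizes $w_g$ with $\tilde v$ at the prescribed phase; this line is the axis of a unique conjugate $h$ of $g$, since the element translating by one period along the line must be the deck transformation covering the identity on $\Gamma$. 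The non-proper-power hypothesis is essential here: it guarantees that $w_g$ has no nontrivial cyclic rotational symmetry, so distinct occurrences produce distinct axes and no conjugate is assigned to two occurrences. Applying the same argument with $\tilde v^{-1}$ in place of $\tilde v$ counts the contribution of the flipped pairs and gives $\langle v^{-1},w_g\rangle_\alpha$ such conjugates.

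Summing the two contributions yields $\langle v,\eta_g\rangle_\alpha=\langle v,w_g\rangle_\alpha+\langle v^{-1},w_g\rangle_\alpha$. The equality $\langle v,\eta_g\rangle_\alpha=\langle v^{-1},\eta_g\rangle_\alpha$ follows from the flip-invariance of $\eta_g$ combined with the observation that $\sigma_F(Cyl_{\widetilde\Gamma}(\tilde v))=Cyl_{\widetilde\Gamma}(\tilde v^{-1})$, and the rightmost expression is simply the notational definition of $\langle v^{\pm 1},w_g\rangle_\alpha$. The main obstacle I anticipate is the clean verification of the bijection above, in particular excluding the possibility that a single conjugate's axis accidentally contains the fixed lift $\tilde v$ at two distinct positions -- precisely what the non-proper-power assumption rules out via the absence of nontrivial rotational periods in $w_g$.
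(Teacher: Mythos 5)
The paper states this lemma without proof, noting only that it is ``not hard to see'' and citing \cite{Ka2}, so there is no argument in the paper to compare against. Your proposal is correct and is the natural unwinding of the definitions: reducing to the primitive case via $\eta_g = m\eta_f$ and $\langle v,w_{f^m}\rangle_\alpha = m\langle v,w_f\rangle_\alpha$, then counting Dirac masses of $\eta_g$ in the cylinder $Cyl_{\widetilde\Gamma}(\tilde v)$ by identifying axes of conjugates containing $\tilde v$ with occurrences of $v$ in $w_g$, using that $w_g$ has no nontrivial rotational period when $g$ is not a proper power so that the identification $L_h/\langle h\rangle\cong\mathbb{S}$ is unique and the correspondence is a genuine bijection; the flipped pairs $(h^\infty,h^{-\infty})$ give $\langle v^{-1},w_g\rangle_\alpha$ by the same argument applied to $\tilde v^{-1}$, and the equality $\langle v,\eta_g\rangle_\alpha=\langle v^{-1},\eta_g\rangle_\alpha$ follows from flip-invariance together with $\sigma_F(Cyl_{\widetilde\Gamma}(\tilde v))=Cyl_{\widetilde\Gamma}(\tilde v^{-1})$, all as you say.
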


\begin{propdfn}[Intersection Form]\label{pd:int-form}\cite{KL4}
Let $F$ be a finitely generated nonabelian free group and with a
very small minimal isometric action on an $\mathbb R$-tree $T$. Let
$\mu\in Curr(F)$. Let $\mu=\lim_{i\to\infty}\lambda_i\eta_{g_i}$ where $\lambda_i\ge
0$ and $g_i\in F$. Then:
\begin{enumerate}
\item The limit
  \[
\lim_{i\to\infty} \lambda_i||g_i||_T
\]
exists and does not depend on the choice of a sequence of
rational currents $\lambda_i\eta_{g_i}$ approximating $\mu$. We call
this limit the \emph{length of $\mu$ with respect to $T$} or the
\emph{geometric intersection number of $T$ and $\mu$} or the
\emph{length of $\mu$ with respect to $T$} and denote it by $\langle
T,\mu\rangle$ or by
$||\mu||_T$.
\item Let $\alpha:F\to\pi_1(\Gamma,x)$ be a simplicial chart on $F$
  and let $\mathcal L$ be a metric structure on $\Gamma$. That is,
  each oriented edge $e\in E\Gamma$ is assigned a length $\mathcal L(e)>0$ so
  that $\mathcal L(e)=\mathcal L(e^{-1})$ for every $e\in E\Gamma$.
  Let $T$ be the $\mathbb R$-tree obtained by giving each edge in
  $\widetilde \Gamma$ the same length as that of its projection in
  $\Gamma$. Thus $F$ acts on $T$ freely and discretely by isometries,
  via $\alpha$.  Then

\[
\langle T, \mu\rangle=\frac{1}{2}\sum_{e\in E\Gamma} \mathcal L(e)
\langle e, \mu\rangle_{\alpha}.
\]
\item The function
  \[
\langle\ , \ \rangle: \overline{cv}(F)\times Curr(F)\to \mathbb R
\]
is continuous, $Out(F)$-invariant, $\mathbb R_{\ge 0}$-linear with
respect to the second argument and $\mathbb R_{\ge 0}$-homogeneous
with respect to the first argument.

\end{enumerate}
\end{propdfn}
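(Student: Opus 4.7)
The plan is to establish the three parts in sequence, starting from the free simplicial case and then extending by continuity, which is the framework developed in the authors' previous paper \cite{KL4}.

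First I would verify the explicit formula in part (2) for a rational current $\mu=\eta_g$ on a tree $T\in cv(F)$ arising from a metric structure $\mathcal{L}$ on $\Gamma$. For such $T$, the translation length $\|g\|_T$ equals the $\mathcal{L}$-weighted length of the reduced cyclic edge-path $w_g$ representing $[g]_F$, since the action is free and simplicial. By the lemma immediately preceding the Proposition-Definition, $\langle e,\eta_g\rangle_\alpha=\langle e,w_g\rangle_\alpha+\langle e^{-1},w_g\rangle_\alpha$ counts the unoriented occurrences of $e$ in $w_g$. Summing $\mathcal{L}(e)\langle e,\eta_g\rangle_\alpha$ over all oriented edges $e\in E\Gamma$ thus double-counts each unoriented edge of $w_g$, so the factor $\tfrac12$ in the formula recovers precisely $\|g\|_T$.

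Next I would extend part (2) to arbitrary $\mu\in Curr(F)$ while $T\in cv(F)$ remains free and simplicial. The right-hand side of the formula in (2) is a finite positive linear combination of the evaluation maps $\mu\mapsto\langle e,\mu\rangle_\alpha$, hence it defines a continuous $\mathbb{R}_{\geq 0}$-linear functional $I_T\colon Curr(F)\to\mathbb{R}$ satisfying $I_T(\eta_g)=\|g\|_T$ by the previous paragraph. Density of rational currents in $Curr(F)$ then shows that for any approximation $\lambda_i\eta_{g_i}\to\mu$ the limit $\lim_i\lambda_i\|g_i\|_T$ exists, equals $I_T(\mu)$, and is independent of the approximating sequence; this establishes part (1) and the formula in (2) for $T\in cv(F)$. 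Independence from the particular chart-and-metric representation $(\alpha,\Gamma,\mathcal{L})$ of a given $T$ is verified by checking invariance under the standard equivariant moves (subdivisions, Stallings folds) relating any two such representations. For general $T\in\overline{cv}(F)$, I would invoke the continuous extension result of \cite{KL4} to extend the intersection form from $cv(F)\times Curr(F)$ to $\overline{cv}(F)\times Curr(F)$; uniqueness of the extension follows from density of $cv(F)\subseteq\overline{cv}(F)$ together with density of rational currents in $Curr(F)$. Part (1) in full generality then becomes the statement that $\langle T,\mu\rangle$, defined via the extension, coincides with $\lim_i\lambda_i\|g_i\|_T$. The properties in part (3), namely $Out(F)$-equivariance, continuity, $\mathbb{R}_{\geq 0}$-linearity in the second variable and $\mathbb{R}_{\geq 0}$-homogeneity in the first, all hold by direct inspection on the dense subset $cv(F)\times\{\text{rational currents}\}$ and extend to the closure by joint continuity.

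The main obstacle is the continuous extension across $\partial CV(F)$: for trees $T$ with dense orbits or with nontrivial arc stabilizers, translation lengths $\|g\|_T$ arise as limits of very different combinatorial quantities along approximating sequences $T_n\to T$ in $\overline{cv}(F)$, and joint continuity of the intersection form is a subtle double-limit statement. The key technical input, developed in \cite{KL4} and reviewed in Section~4 of the present paper, is the bounded back-tracking property for very small actions of $F$ on $\mathbb{R}$-trees, which provides uniform control over the discrepancy between $\|g\|_{T_n}$ and the naive length of a lift of $w_g$ in the approximating trees. This uniform $BBT$-estimate is precisely what is needed to interchange the two limits and conclude joint continuity of $\langle\,,\,\rangle$ on $\overline{cv}(F)\times Curr(F)$.
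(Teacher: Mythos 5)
The paper does not prove this Proposition-Definition; the paragraph immediately following it attributes the free simplicial case to \cite{Ka2,L} and the general very small case to \cite{KL4}, so there is no in-paper argument to compare against. Your sketch faithfully reconstructs the strategy of those cited sources --- verify the explicit formula of part~(2) on rational currents for simplicial $T$, extend to all of $Curr(F)$ by density and the continuity and linearity of the functional $I_T$, and then delegate the passage across $\partial CV(F)$ to the bounded back-tracking machinery of \cite{GJLL} and \cite{KL4} --- and correctly identifies the joint continuity of the form on $\overline{cv}(F)\times Curr(F)$ as the hard step which, like the paper, it leaves to \cite{KL4}.
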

Proposition-Definition~\ref{pd:int-form} for free simplicial actions was
obtained in ~\cite{Ka2,L}. Recently, Kapovich and Lustig~\cite{KL4} generalized this result
to the case of arbitrary very small actions and proved Proposition-Definition~\ref{pd:int-form} in the
form stated above.

Let $A$ be a free basis of $F$ and let $X(F,A)$ be the Cayley graph of $F$
corresponding to $A$. Thus $X(F,A)\in cv(F)$.
 For a current $\mu\in Curr(F)$ denote
$||\mu||_A:=||\mu||_{X(F,A)}=\langle X(F,A), \mu\rangle$.

\begin{defn}[Uniform current]\label{defn:na}
Let $A$ be a free basis of $F$ and let $k\ge 2$ be the rank of $F$.
Consider the simplicial chart $\alpha$ on $F$ corresponding to $A$.
The \emph{uniform current corresponding to $A$}, denoted by $\nu_A$
is the current defined by
\[
\langle v, \nu_A\rangle_A=\frac{1}{2k(2k-1)^{n-1}},
\]
where $v\in F(A)$ is a nontrivial freely reduced word and where $|v|_A=n$.
\end{defn}

It is not hard to show that $\nu_A$ is indeed a geodesic current on
$F$ and we refer the reader to \cite{Ka2,KKS} for a more detailed
discussion.

\section{Laminations}\label{sect:laminations}

Recall that, as specified in Convention~\ref{conv},  $F$  is a
finitely generated nonabelian free group. We refer the reader to
\cite{CHL1,CHL2,CHL3} for detailed background material on algebraic
laminations in the context of free groups. We shall only state some
basic definitions and facts here.

\begin{defn}[Algebraic laminations]
An \emph{algebraic lamination} on $F$ is a closed $F$-invariant and
flip-invariant subset $L\subseteq \partial^2 F$.

Denote by ${\Lambda^2}(F)$ the set of all algebraic laminations on
$F$.
\end{defn}

\begin{defn}[Laminary language of an algebraic lamination]

Let $\alpha: F\to \pi_1(\Gamma,x)$ be a simplicial chart of $F$ and
let $X=\widetilde \Gamma$. Recall that $\alpha$ defines a canonical
$F$-equivariant homeomorphism $\widetilde{\alpha}: \partial F\to \partial X$

Let $B$ be the set of oriented edges of $\Gamma$.
Let $L\in {\Lambda^2}(F)$. The \emph{laminary language of $L$,
corresponding to $\alpha$}, denoted $L_{\alpha}$,  is defined as the
set of all reduced edge-paths $v$ in $\Gamma$ such that there exists
a bi-infinite geodesic $\gamma$ in $X$  with endpoints
$\widetilde{\alpha}(\xi_1),\widetilde\alpha(\xi_2)\in \partial X$ for some
$(\xi_1,\xi_2)\in L$, such that $\gamma$ contains a subsegment
labelled by $v$. We can think of $L_{\alpha}$ as a subset of
$B^\ast$, where $B^\ast$ is the set of all words in the alphabet $B$.

If $\alpha$ is a simplicial chart on $F$ corresponding to a free
basis $A$ of $F$ (where $\Gamma$ is the wedge of circles labelled by
elements of $A$ and where $X$ is the Cayley graph of $F$ with
respect to $A$), we will denote $L_{\alpha}$ by $L_A$. In this case
$L_A$ is a set of freely reduced words in $F=F(A)$.
\end{defn}

Note that in \cite{CHL1,CHL2,CHL3} laminary languages are defined
only with respect to a free basis of a free group. However, it is
easy to see that the definition and the basic results listed here
extend to an arbitrary simplicial chart, that is not necessarily a
wedge of loop-edges.

\begin{prop}\cite{CHL1}
Let $\alpha: F\to \pi_1(\Gamma,x)$ be a simplicial chart of $F$.
Let $L,L'\in {\Lambda^2}(F)$. Then $L\subseteq L'$ if and only
if $L_{\alpha}\subseteq L'_{\alpha}$.
\end{prop}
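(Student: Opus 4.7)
The plan is to prove the two directions separately, with the forward implication being essentially immediate from the definitions and the backward implication requiring a compactness/approximation argument.

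For the forward direction, suppose $L \subseteq L'$ and let $v \in L_\alpha$. By definition there exist $(\xi_1,\xi_2) \in L$ and a bi-infinite geodesic $\gamma$ in $X = \widetilde\Gamma$ from $\widetilde\alpha(\xi_1)$ to $\widetilde\alpha(\xi_2)$ containing a subsegment labelled by $v$. Since $(\xi_1,\xi_2) \in L \subseteq L'$, this same geodesic witnesses $v \in L'_\alpha$.

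For the backward direction, suppose $L_\alpha \subseteq L'_\alpha$ and let $(\xi_1,\xi_2) \in L$. I would like to show $(\xi_1,\xi_2) \in L'$. Let $\gamma$ be the bi-infinite geodesic in $X$ from $\widetilde\alpha(\xi_1)$ to $\widetilde\alpha(\xi_2)$, and exhaust $\gamma$ by an increasing sequence of finite subpaths $\gamma_n$ whose lengths tend to $\infty$ in both directions. Each $\gamma_n$ projects to a reduced edge-path $v_n$ in $\Gamma$, and by definition of laminary language $v_n \in L_\alpha \subseteq L'_\alpha$. So for each $n$ there is some $(\eta_1^{(n)}, \eta_2^{(n)}) \in L'$ and some lift $\gamma_n'$ of $v_n$ to $X$ contained in the bi-infinite geodesic from $\widetilde\alpha(\eta_1^{(n)})$ to $\widetilde\alpha(\eta_2^{(n)})$. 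Using the $F$-action, choose $g_n \in F$ such that $g_n \cdot \gamma_n' = \gamma_n$; then by $F$-invariance of $L'$ the pair $(\zeta_1^{(n)}, \zeta_2^{(n)}) := (g_n \eta_1^{(n)}, g_n \eta_2^{(n)})$ lies in $L'$, and its associated bi-infinite geodesic contains $\gamma_n$.

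Now I would conclude by a limit argument. Since the geodesic between $\widetilde\alpha(\zeta_1^{(n)})$ and $\widetilde\alpha(\zeta_2^{(n)})$ contains the finite segment $\gamma_n$ and the $\gamma_n$ exhaust $\gamma$ in both directions, one sees (by Gromov-hyperbolicity of $X$, or directly from the tree structure) that $\zeta_1^{(n)} \to \xi_1$ and $\zeta_2^{(n)} \to \xi_2$ in $\partial F$. Equivalently, for every cylinder basic open neighborhood $Cyl_X(\gamma_m) \times$-flavor of $(\xi_1,\xi_2)$ in $\partial^2 F$, the pair $(\zeta_1^{(n)},\zeta_2^{(n)})$ eventually lies in this neighborhood. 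Thus $(\zeta_1^{(n)}, \zeta_2^{(n)}) \to (\xi_1,\xi_2)$ in $\partial^2 F$, and since $L'$ is closed this gives $(\xi_1,\xi_2) \in L'$, completing the proof.

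The only nontrivial step is the limit argument in the backward direction, specifically checking that the approximating pairs $(\zeta_1^{(n)},\zeta_2^{(n)}) \in L'$ actually converge to $(\xi_1,\xi_2)$ in $\partial^2 F$. The main obstacle here is bookkeeping with orientations: if $v_n$ appears reversed in the leaf witnessing $v_n \in L'_\alpha$, one must use the flip-invariance of $L'$ to replace the pair by its flip before applying $g_n$. Once this is done the convergence follows from the fact that cylinders $Cyl_X(\gamma_m)$ form a basis at $(\xi_1,\xi_2)$ and any such cylinder contains the pair $(\zeta_1^{(n)},\zeta_2^{(n)})$ as soon as $\gamma_n$ contains $\gamma_m$.
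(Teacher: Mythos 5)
Your argument is correct and is the natural proof of this statement. Since the paper cites this proposition from \cite{CHL1} without reproducing a proof, I cannot compare against the paper's own argument, but your reasoning is sound: the forward direction is immediate from the definition of laminary language, and the backward direction correctly exploits the two properties that the laminary language is blind to but which are built into the definition of $\Lambda^2(F)$, namely $F$-invariance and closedness. The key point you identify --- that the $F$-translated pairs $(\zeta_1^{(n)},\zeta_2^{(n)})$ converge to $(\xi_1,\xi_2)$ because their bi-infinite geodesics contain an exhausting sequence of subsegments $\gamma_n$ of the geodesic from $\widetilde\alpha(\xi_1)$ to $\widetilde\alpha(\xi_2)$, and the cylinders $Cyl_X(\gamma_n)$ shrink to a neighborhood basis of $(\xi_1,\xi_2)$ --- is exactly right in a tree, and your remark about using flip-invariance to fix an orientation mismatch closes the only potential gap. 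One minor remark: the existence of $g_n$ with $g_n\gamma_n' = \gamma_n$ uses that $F$ acts transitively on lifts of a given reduced path in $\widetilde\Gamma$ (free, properly discontinuous, cocompact covering action), which is worth stating but is standard.
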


Let $H\le F$ be a finitely generated subgroup. Then the inclusion of
$H$ in $F$ extends to a canonical $H$-equivariant topological
embedding $i_H: \partial H\to
\partial F$. We will usually suppress this embedding and write that
$\partial H\subseteq \partial F$ and $\partial^2 H\subseteq
\partial^2 F$.

\begin{propdfn}
Let $H\le F$ be a finitely generated subgroup and let $L\in
{\Lambda^2}(H)$ be an algebraic lamination on $H$. Then
\[
i_\Lambda(L):=\overline{\underset{f\in F}{\bigcup} f L}
\]
is a closed $F$-invariant subset of $\partial^2 F$, that is, an
algebraic lamination on $F$. Thus $i_\Lambda(L)\in {\Lambda^2}(F)$.
\end{propdfn}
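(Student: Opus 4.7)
The plan is to verify the three defining properties of an algebraic lamination --- closedness, $F$-invariance, and flip-invariance --- for the set $i_\Lambda(L)$. The statement as written mentions only closedness and $F$-invariance, but since $L \in \Lambda^2(H)$ is by hypothesis flip-invariant under $\sigma_H$, the flip-invariance of $i_\Lambda(L)$ under $\sigma_F$ will also need to be checked so that the conclusion ``$i_\Lambda(L) \in \Lambda^2(F)$'' is justified.

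First I would invoke the canonical $H$-equivariant topological embedding $\partial H \hookrightarrow \partial F$ noted just before the statement to view $L \subseteq \partial^2 H \subseteq \partial^2 F$. Then each translate $fL$ is a well-defined subset of $\partial^2 F$, the union $\bigcup_{f \in F} fL$ lies in $\partial^2 F$, and $i_\Lambda(L)$ is closed by construction (it is the topological closure in $\partial^2 F$ of that union).

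For $F$-invariance, I would use that $F$ acts on $\partial^2 F$ by homeomorphisms, so for every $g \in F$ the map $x \mapsto gx$ commutes with taking closure. Thus
\[
g \cdot i_\Lambda(L) = g \cdot \overline{\bigcup_{f \in F} fL} = \overline{\bigcup_{f \in F} (gf)L} = \overline{\bigcup_{f' \in F} f'L} = i_\Lambda(L),
\]
after reindexing $f' = gf$. For flip-invariance, I would observe that the flip $\sigma_F$ is a homeomorphism of $\partial^2 F$ that commutes with the diagonal $F$-action, and that its restriction to $\partial^2 H$ coincides with $\sigma_H$ under the embedding $\partial^2 H \hookrightarrow \partial^2 F$. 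Since $\sigma_H(L) = L$, we have $\sigma_F(fL) = f \sigma_F(L) = fL$ for every $f \in F$; applying $\sigma_F$ (continuous) to the closure then yields $\sigma_F(i_\Lambda(L)) = i_\Lambda(L)$.

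There is no real obstacle here: the content of the statement is essentially that the proposed definition makes sense and is well-behaved. The only mildly subtle point is to make sure that, under the canonical embedding $\partial^2 H \hookrightarrow \partial^2 F$, the flip map $\sigma_H$ is genuinely the restriction of $\sigma_F$ (which follows immediately from their common coordinate-swap description), so that the flip-invariance of $L$ inside $\partial^2 H$ transfers to flip-invariance of each translate $fL$ inside $\partial^2 F$.
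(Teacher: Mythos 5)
Your verification is correct and complete. The paper states this Proposition-Definition without proof, treating it as a routine consequence of the definitions; your argument --- closedness by construction, $F$-invariance via reindexing the union (using that each $g\in F$ acts by a homeomorphism, so it commutes with closure), and flip-invariance via the observation that $\sigma_F$ restricts to $\sigma_H$ on $\partial^2 H$ and commutes with the $F$-action --- is exactly the expected justification, and you are right to check flip-invariance even though the statement's phrasing mentions only closedness and $F$-invariance.
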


\begin{defn}[Lamination defined by a tree action]\cite{CHL1,CHL2,CHL3}\label{defn:dual}
Let $F$ be a finitely generated free group acting isometrically on
an $\mathbb R$-tree $T$. The \emph{lamination on $F$ corresponding
to the action of $F$ on $T$}, denoted $L^2(T)\in {\Lambda^2}(F)$, is
defined as follows.

Choose a simplicial chart $\alpha: F\to \pi_1(\Gamma,x)$ and fix the
corresponding identification of $\partial^2 F$ and $\partial^2
\widetilde \Gamma$.

Then for $(\xi_1,\xi_2)\in \partial^2 F$ we have $(\xi_1,\xi_2)\in
L^2(T)$ if and only if for every $\epsilon>0$ and every reduced
edge-path $v$ in $\Gamma$ labelling some subsegment of the
bi-infinite geodesic joining $\xi_1$ and $x_2$ in $\widetilde
\Gamma$ there is some reduced and cyclically reduced closed path $w$
in $\Gamma$ containing $v$ as a subpath such that $||w||_T\le
\epsilon$.

It is not hard to show (see \cite{LL,CHL2}) that this definition of
$L^2(T)$ does not depend on the choice of a simplicial chart on $F$.
\end{defn}

\begin{defn}[Support of a current]\label{defn:support}
Let $\mu\in Curr(F)$. Then we define the \emph{support of $\mu$},
denoted $supp(\mu)$, as $\partial^2F-\mathcal U$ where $\mathcal U$
is the union of all open subsets $U\subseteq \partial^2 F$ such that
$\mu(U)=0$. It is easy to see that $supp(\mu)\subseteq \partial^2 F$
is both closed and $F$-invariant, so that $supp(\mu)\in
\Lambda^2(F)$. We say that $\mu\in Curr(F)$ has \emph{full support}
if $supp(\mu)=\partial^2F$.

Let $\alpha:F\to\pi_1(\Gamma,x)$ be a simplicial chart on $F$, let
$L=supp(\mu)\in \Lambda^2(F)$ and let $B=E\Gamma$. We will denote
the laminary language $L_\alpha\subseteq B^\ast$ by
$supp_{\alpha}(\mu)$. Thus $supp_{\alpha}(\mu)$ consists of all
reduced paths $v$ in $\Gamma$ such that $\langle
v,\mu\rangle_\alpha>0$.

In the case where the simplicial chart $\alpha$ is defined by a free
basis $A$ of $F$, we will denote $L_\alpha$ by $supp_A(\mu)$.
\end{defn}

For example, it is obvious that for any free basis $A$ of $F$ the
uniform current $\nu_A$ has full support. The following lemma is an
easy corollary of the definitions:
\begin{lem}
Let $\alpha:F\to\pi_1(\Gamma,x)$ be a simplicial chart on $F$ and
let $\mu\in Curr(F)$. Let $L=supp(\mu)\in {\Lambda^2}(F)$. Then for
a reduced edge-path $v$ in $\Gamma$ we have $v\in L_{\alpha}$ if and
only if $\langle v, \mu\rangle_{\alpha}>0$.
\end{lem}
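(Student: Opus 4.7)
The plan is to prove the two implications by unpacking the definitions of $L_\alpha$, $\mathrm{supp}(\mu)$, and the cylinder sets, and exploiting the fact that cylinders form a countable basis of the topology on $\partial^2 F$. Throughout, let $L = \mathrm{supp}(\mu) = \partial^2 F \setminus \mathcal{U}$, where $\mathcal{U}$ is the union of all open subsets of $\partial^2 F$ having $\mu$-measure zero. For a reduced edge-path $v$ in $\Gamma$, fix a lift $\gamma$ of $v$ to $\widetilde\Gamma$, so $\langle v, \mu\rangle_\alpha = \mu(Cyl_{\widetilde\Gamma}(\gamma))$.

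For the forward direction, suppose $v \in L_\alpha$. By definition of the laminary language, there exists $(\xi_1,\xi_2)\in L$ such that the bi-infinite geodesic in $\widetilde\Gamma$ from $\widetilde\alpha(\xi_1)$ to $\widetilde\alpha(\xi_2)$ contains some lift of $v$ as a subpath. By $F$-equivariance, after translating by a suitable element of $F$ and using $F$-invariance of $\mu$, we may assume this lift is $\gamma$ itself. Then $(\xi_1,\xi_2)\in Cyl_{\widetilde\Gamma}(\gamma)$, which is an open neighborhood of a point in $L = \mathrm{supp}(\mu)$. If we had $\mu(Cyl_{\widetilde\Gamma}(\gamma))=0$, then this open set would lie in $\mathcal{U}$, contradicting the fact that it meets $L$. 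Hence $\langle v,\mu\rangle_\alpha > 0$.

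For the converse, suppose $\langle v,\mu\rangle_\alpha > 0$, i.e.\ $\mu(Cyl_{\widetilde\Gamma}(\gamma)) > 0$. The first key observation is that $\mu(\mathcal U)=0$: since the cylinders $Cyl_{\widetilde\Gamma}(\gamma')$ form a countable basis of open sets in $\partial^2 F$, any open set $U$ with $\mu(U)=0$ is a countable union of such cylinders each of $\mu$-measure zero, and $\mathcal{U}$ itself is then a countable union of open sets of $\mu$-measure zero, hence has measure zero by countable subadditivity of the Radon measure $\mu$. Consequently,
\[
\mu\bigl(Cyl_{\widetilde\Gamma}(\gamma) \cap L\bigr) = \mu\bigl(Cyl_{\widetilde\Gamma}(\gamma)\bigr) - \mu\bigl(Cyl_{\widetilde\Gamma}(\gamma)\cap \mathcal U\bigr) = \mu\bigl(Cyl_{\widetilde\Gamma}(\gamma)\bigr) > 0,
\]
so $Cyl_{\widetilde\Gamma}(\gamma)\cap L \neq \emptyset$. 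Picking $(\xi_1,\xi_2)$ in this intersection, the defining property of the cylinder says that the bi-infinite geodesic from $\widetilde\alpha(\xi_1)$ to $\widetilde\alpha(\xi_2)$ contains $\gamma$, a lift of $v$, as a subpath. Hence $v\in L_\alpha$, completing the proof.

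The only nontrivial point is the measure-zero statement for $\mathcal{U}$; everything else is essentially unwinding definitions. Since the topology on $\partial^2 F$ has a countable basis of cylinder sets and $\mu$ is a Borel measure, this step is a routine application of countable subadditivity, and no deeper input (e.g.\ from the tree action, or from the intersection form) is needed.
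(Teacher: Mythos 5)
Your proof is correct, and since the paper offers no proof beyond the remark that the lemma ``is an easy corollary of the definitions,'' your argument supplies precisely the natural unwinding: the forward direction follows because a cylinder meeting $supp(\mu)$ cannot have measure zero, and the converse follows once one notes (via second-countability of $\partial^2 F$) that $\mu(\mathcal U)=0$, so a positive-measure cylinder must contain a point of the support. The one step you rightly flag as not purely definitional, $\mu(\mathcal U)=0$, is handled correctly by the countable-basis/countable-subadditivity argument.
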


\begin{notation}
Let $F$ be a finitely generated free group with a free basis $A$. We
will denote by $X(F,A)$ the Cayley graph (which happens to be a
tree) of $F$ with respect to $A$.

For a nontrivial finitely generated subgroup $H\le F$ denote by
$X_H$ the smallest $H$-invariant subtree of $X(F,A)$.

We also denote by $\Gamma_H$ the Stallings subgroup graph of $H$
with respect to $A$ (see~\cite{KM} for a detailed discussion about
Stallings subgroup graphs). Recall that $\Gamma_H$ can be obtained
as follows. Let $x\in X_H$ be the closest to $1\in F$ vertex of
$X_H$. Then $X'_H=X_H\bigcup \underset{f\in F}{\cup} f[1,x]$ is an
$H$-invariant subtree of $X(F,A)$. Then $\Gamma_H=X'_H/H$. Recall
that every oriented edge in $\Gamma_H$ has a label $a\in A^{\pm 1}$
which comes from the corresponding label of the edge in $X'_H$. The
image, under the quotient map $X'_H\to X'_H/H=\Gamma_H$ of the
vertex $1\in F$ in $\Gamma_H$ is the base-vertex of $\Gamma_H$.

For a finite connected graph $\Gamma$ with a nontrivial fundamental
group, the \emph{core of $\Gamma$}, denoted $Core(\Gamma)$, is the
unique smallest subgraph $\Delta$ of $\Gamma$ such that $\Delta$ is
homotopically equivalent to $\Gamma$. Note that $\Delta$ has no
degree-one vertices and that $\Gamma$ is equal to the union of
$\Delta$ and a finite (possibly empty) collection of trees attached
to some vertices of $\Delta$.
\end{notation}

\begin{conv}
For the remainder of this section, let $H\le F=F(A)$ be a finitely
generated subgroup. Let $\Gamma_H$ be the Stallings subgroup graph
of $H$ with respect to $A$ and let $\Delta_H=Core(\Gamma_H)$.

Let $x\in \Delta_H$ be the closest vertex of $\Delta_H$ to the basepoint $y$ of
$\Gamma_H$. Let $u\in F(A)$ be the label of the
segment joining $y$ to $x$ in $\Gamma_H$. Then we have a canonical
isomorphism $\alpha_{H,A}:H\to \pi_1(\Gamma_H,y)$, where
$\alpha_{H,A}^{-1}$ sends a loop $\gamma$ at $x$ in $\Delta_H$ to
$uwu^{-1}$, where $w\in F(A)$ is the label of $\gamma$. Thus the
pair $(\Gamma_H,y)$ defines a canonical isomorphism
$\alpha_{H,A}:H\to \pi_1(\Gamma_H,y)$ which is a simplicial chart
for $H$. Moreover, a copy of $\widetilde{\Delta}_H$ is contained in
$X(F,A)$ and coincides with the smallest $H$-invariant subtree of
$X(F,A)$.

\end{conv}

The following lemma is easily established by compactness argument
since the graph $\Delta_H$ is finite.

\begin{lem}\label{lem:H}
Let $L\in {\Lambda^2}(H)$ and let $L'=i_\Lambda(L)\in \Lambda^2(F)$.
Then for $v\in F(A)$ we have $v\in L'_A$ if and only if there is a
bi-infinite reduced path $\gamma$ in $\Delta_H$ whose $F(A)$-label
contains $v$ as a subword, such that some bi-infinite geodesic in
$\widetilde \Delta_H$, whose pair of endpoints is an element of $L$,
projects to $\gamma$.
\end{lem}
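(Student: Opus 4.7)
The plan is to prove both implications separately, with the easier (``if'') direction essentially immediate from the definitions and the (``only if'') direction requiring a short argument that reduces a pair in $i_\Lambda(L)=\overline{\bigcup_{f\in F} fL}$ to a pair actually in $L$.

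For the ``if'' direction, suppose $\gamma$ and $\tilde\gamma$ are as in the statement. Since $\widetilde{\Delta}_H\subseteq X(F,A)$, the lift $\tilde\gamma$ is itself a bi-infinite geodesic in the Cayley tree $X(F,A)$, and its endpoints $(\xi_1,\xi_2)\in L$ lie in $L\subseteq \bigcup_{f\in F} fL\subseteq L'$. Because the quotient map $\widetilde{\Delta}_H\to \Delta_H$ is a covering (the $H$-action on $\widetilde{\Delta}_H$ is free and properly discontinuous) and preserves edge-labels coming from $A$, the $F(A)$-label of $\tilde\gamma$ in $X(F,A)$ agrees with the $F(A)$-label of $\gamma$ in $\Delta_H$, and so contains $v$ as a subword. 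This shows $v\in L'_A$.

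For the ``only if'' direction, assume $v\in L'_A$. Letting $\tilde v$ range over all lifts of $v$ in $X(F,A)$, the union of cylinders $U_v=\bigcup_{\tilde v} Cyl_{X(F,A)}(\tilde v)$ is an open, $F$-invariant subset of $\partial^2 F$ that meets $L'$. Since $L'=\overline{\bigcup_{f\in F}fL}$ and $U_v$ is open, $U_v$ already meets $\bigcup_{f\in F}fL$. Hence there exist $f\in F$ and $(\zeta_1,\zeta_2)\in L$ with $f(\zeta_1,\zeta_2)\in U_v$. Because the left $F$-action on $X(F,A)$ preserves edge-labels, applying $f^{-1}$ shows that the bi-infinite geodesic from $\zeta_1$ to $\zeta_2$ in $X(F,A)$ itself contains a lift of $v$. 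But $(\zeta_1,\zeta_2)\in L\subseteq \partial^2 H$, and $\widetilde{\Delta}_H$ is the smallest $H$-invariant subtree of $X(F,A)$, so this geodesic lies entirely in $\widetilde{\Delta}_H$. Projecting it to $\Delta_H$ gives a bi-infinite reduced path $\gamma$ (reducedness is preserved because the projection is a local homeomorphism) whose label contains $v$, as required.

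The main point where one must be careful is the assertion that the geodesic joining two points of $\partial H\subseteq \partial F$ in $X(F,A)$ lies inside $\widetilde{\Delta}_H$; this relies on $\widetilde{\Delta}_H$ being a subtree of $X(F,A)$ whose boundary at infinity equals $\partial H$, which in turn uses the minimality characterization of $\widetilde{\Delta}_H$ recalled just before the lemma. The finiteness of $\Delta_H$ is what makes the projected path a legitimate bi-infinite reduced edge-path in a finite graph, which is the compactness ingredient alluded to in the statement of the lemma.
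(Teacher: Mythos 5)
Your proof is correct. The paper states this lemma without proof (saying only that it ``is easily established by compactness argument since the graph $\Delta_H$ is finite''), so there is no argument in the paper to compare against directly. Your two-direction argument is a sound and complete reconstruction: the ``if'' direction is immediate from the inclusion $L\subseteq L'$ and the fact that the embedded copy of $\widetilde\Delta_H$ sits label-preservingly inside $X(F,A)$; and the ``only if'' direction is correctly handled by observing that the $F$-invariant union of cylinders $U_v$ is open, so meeting the closure $L'=\overline{\bigcup_f fL}$ forces it to meet $\bigcup_f fL$ itself, after which $F$-invariance of $U_v$ and convexity of the subtree $\widetilde\Delta_H$ finish the job.

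One small remark on your final paragraph: the compactness/finiteness of $\Delta_H$ is not really what makes the projected path a ``legitimate'' bi-infinite reduced path (that holds in any graph). A more likely role for the finiteness the paper alludes to is to show directly that $\bigcup_{f\in F}fL$ (or at least $\bigcup_f f\partial^2H$, as in the lemma that immediately follows in the paper) is already closed, so that no closure operation is needed; your approach via openness of $U_v$ is an equally valid, and arguably cleaner, way to sidestep that issue, and it does not in fact use finiteness of $\Delta_H$ at all.
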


Lemma~\ref{lem:H} in turn easily implies:
\begin{lem}
The following hold:
\begin{enumerate}
\item Let $(\xi_1,\xi_2)\in \partial^2 F$. Then $(\xi_1,\xi_2)\in
  i_{\Lambda}(\partial^2 H)$ if and only if the label of the
  bi-infinite geodesic from $\xi_1$ to $\xi_2$ in $X(F,A)$ is the
  $F(A)$-label of some bi-infinite reduced edge-path in
  $Core(\Gamma_H)$.
\item The subset $\underset{f\in F}{\bigcup} f\partial^2H\subseteq
  \partial^2 F$ is closed, and hence
  \[
i_{\Lambda}(\partial^2 H)=\underset{f\in F}{\bigcup} f\partial^2H.
\]
\end{enumerate}

\end{lem}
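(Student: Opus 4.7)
The plan is to derive both parts from the already-stated Lemma~\ref{lem:H} applied to the specific algebraic lamination $L=\partial^2 H\in \Lambda^2(H)$. The key simplification is that $\widetilde{\Delta}_H$ coincides with the minimal $H$-invariant subtree $X_H$ of $X(F,A)$, so $\partial\widetilde{\Delta}_H=\partial H$; thus every bi-infinite geodesic in $\widetilde{\Delta}_H$ automatically has its pair of endpoints in $\partial^2 H$. Therefore Lemma~\ref{lem:H} in this setting simply says: a reduced word $v\in F(A)$ lies in $(i_\Lambda(\partial^2 H))_A$ if and only if $v$ occurs as a subword of the $F(A)$-label of some bi-infinite reduced edge-path in $Core(\Gamma_H)=\Delta_H$.

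For part~(1), recall that by Definition~\ref{defn:support}/the definition of laminary language, for a closed $F$-invariant $L'\subseteq \partial^2 F$ we have $(\xi_1,\xi_2)\in L'$ if and only if every finite reduced label $v$ of a subsegment of the bi-infinite geodesic from $\xi_1$ to $\xi_2$ in $X(F,A)$ lies in $L'_A$ (this is immediate from the fact that cylinders form a basis and that $L'_A$ records exactly the cylinders meeting $L'$). Applying this with $L'=i_\Lambda(\partial^2 H)$ and combining with the previous paragraph, $(\xi_1,\xi_2)\in i_\Lambda(\partial^2 H)$ iff every finite subword of the geodesic label appears as a subword of some bi-infinite reduced path in $\Delta_H$. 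Since $\Delta_H$ is a finite graph, a standard K\"onig/diagonal compactness argument lets one upgrade this finite-subword condition into the existence of a single bi-infinite reduced edge-path in $\Delta_H$ whose label is the entire bi-infinite geodesic label. The converse implication is immediate, which gives part~(1).

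For part~(2), I plan to prove the nontrivial inclusion $i_\Lambda(\partial^2 H)\subseteq \bigcup_{f\in F} f\partial^2 H$, from which closedness of $\bigcup_f f\partial^2 H$ follows automatically (any set equal to its own closure is closed). Let $(\xi_1,\xi_2)\in i_\Lambda(\partial^2 H)$. By part~(1) there is a bi-infinite reduced edge-path $\gamma$ in $\Delta_H$ whose $F(A)$-label equals the label of the geodesic from $\xi_1$ to $\xi_2$ in $X(F,A)$. Lifting $\gamma$ to $\widetilde{\Delta}_H=X_H\subseteq X(F,A)$ produces a bi-infinite geodesic $\widetilde{\gamma}$ in $X(F,A)$ with endpoints $(\eta_1,\eta_2)\in \partial^2 H$ having the same label as the geodesic $[\xi_1,\xi_2]$. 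Because $X(F,A)$ is a tree, two bi-infinite reduced edge-paths with identical labels differ by a unique deck translation: there exists $f\in F$ with $f\widetilde{\gamma}=[\xi_1,\xi_2]$, hence $f(\eta_1,\eta_2)=(\xi_1,\xi_2)$ and $(\xi_1,\xi_2)\in f\partial^2 H$.

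The main point to take care of is the compactness step in part~(1): one must check carefully that subword-realizability in a finite graph suffices to glue a bi-infinite path, which uses the finiteness of the vertex set of $\Delta_H$ to apply K\"onig's lemma. I expect no further obstacle, since the translation step in part~(2) is forced by the tree structure of $X(F,A)$.
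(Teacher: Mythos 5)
Your proposal is correct and follows the route the paper intends: the authors present this lemma as an easy consequence of Lemma~\ref{lem:H}, and your argument simply carries out the omitted details. The reduction in part~(1) to the statement that $(\xi_1,\xi_2)\in L'$ iff all finite subsegment labels of its geodesic lie in $L'_A$ is valid (using $F$-invariance of $L'$ to conclude that $v\in L'_A$ iff \emph{every} two-sided cylinder labeled $v$ meets $L'$, and then a nested-compact-cylinder intersection argument together with closedness of $L'$); the K\"onig/diagonal extraction over the finite vertex set of $\Delta_H$ is exactly the right way to upgrade subword-realizability to a single bi-infinite reading; and in part~(2) the deck-translation step is forced by the labeled tree structure of $X(F,A)$. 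No gaps; this matches the paper's (unstated) proof.
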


\section{Bounded back-tracking}

\begin{defn}[Bounded Back-tracking Constant]
Let $F=F(A)$ be a finitely generated free group acting isometrically
on an $\mathbb R$-tree $T$. Let $X(F,A)$ be the Cayley graph of $F$
with respect to $A$. Let $p\in T$. There is a unique $F$-equivariant
map $i_p: X(F,A)\to T$ that is linear on edges on $X(F,A)$ and with
$i_p(1)=p$. The \emph{Bounded Back-tracking Constant corresponding
to $A$, $T$ and $p$}, denoted $BBT_{T,p}(A)$, is the infimum of all
$C>0$ such that for any $Q,R\in X(F,A)$ the image $i_p([Q,R])$ of
$[Q,R]\subseteq X(F,A)$ is contained in the $C$-neighborhood of
$[i_p(Q),i_p(R)]$. The \emph{Bounded Back-tracking Constant
corresponding to $A$ and $T$}, denoted $BBT_{T}(A)$, is the infimum
over all $p\in T$ of $BBT_{T,p}(A)$.
\end{defn}

An useful result of \cite{GJLL} states:

\begin{prop}\label{prop:BBT}
Let $F$ be a finitely generated nonabelian free group with a very
small isometric minimal action on an $\mathbb R$-tree $T$. Let $A$
be a free basis of $F$ and let $p\in T$. Then $BBT_{T,p}(A)<\infty$.
\end{prop}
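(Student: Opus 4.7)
The plan is to follow the argument of Gaboriau--Jiang--Levitt--Lustig \cite{GJLL}, so I will only sketch the main ingredients. Setup: fix $p \in T$ and consider the $F$-equivariant map $i_p: X(F,A) \to T$, which sends an edge $[g, ga]$ (with $g \in F$, $a \in A^{\pm 1}$) linearly onto the geodesic segment $[g \cdot p, ga \cdot p]$. For a freely reduced word $w = a_1^{\epsilon_1} \cdots a_n^{\epsilon_n}$, put $p_j = (a_1^{\epsilon_1} \cdots a_j^{\epsilon_j}) \cdot p$; then $i_p([1, w])$ is the broken geodesic $p_0 \to p_1 \to \cdots \to p_n$. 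By $F$-equivariance and the fact that every geodesic in $X(F,A)$ is an $F$-translate of a segment of the form $[1,w]$, it suffices to exhibit a constant $C>0$ such that $d(p_j, [p_0, p_n]) \le C$ uniformly in $w$ and in $0 \le j \le n$.

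The easy half is a uniform \emph{per-step} bound. Since $A$ is finite, there are finitely many pairs $(a,b) \in A^{\pm 1} \times A^{\pm 1}$ with $ab$ reduced; for each the overlap of $[p, ap]$ and $[ap, abp]$ at the junction $ap$ has some finite length. Taking the maximum over these finitely many pairs gives a constant $M$ that bounds the back-tracking contribution at every single junction of the broken geodesic.

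The hard part, which is the heart of \cite{GJLL}, is to show that these per-step back-trackings do \emph{not} accumulate along a long reduced word: the total loss $\frac{1}{2}\bigl(\sum_{j=1}^n d(p_{j-1}, p_j) - d(p_0, p_n)\bigr)$ stays bounded as $n \to \infty$. This is exactly where the very small hypothesis must be used. If back-tracking could pile up along longer and longer reduced words, one could extract in the limit a non-degenerate arc in $T$ whose stabilizer is not maximal cyclic, or a tripod with a nontrivial stabilizer, either of which would contradict the very small assumption on the action. The argument in \cite{GJLL} makes this precise; one natural route is to use the decomposition theory of very small $F$-trees (into simplicial, axial, and surface pieces, in the sense of Bestvina--Feighn and Levitt) and establish the bound separately on each type of piece, with the very small condition ruling out bad gluings. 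An alternative route is a limit/compactness argument exploiting the fact that very small $F$-trees form the compactification $\overline{cv}(F)$ of Outer Space with its equivariant Gromov--Hausdorff topology, and that $BBT_{T,p}(A)$ varies upper-semicontinuously in $T$ under suitable normalization.

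The main obstacle is thus precisely this non-accumulation step: the local per-junction estimate is immediate but by itself only gives back-tracking $\le Mn$, which is useless; extracting a constant bound independent of $n$ is where the very small hypothesis is genuinely required, and it is essentially sharp, since dropping very smallness allows examples with $BBT_{T,p}(A) = \infty$.
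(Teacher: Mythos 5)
The paper does not prove this proposition: the sentence introducing it attributes it directly to \cite{GJLL} and no proof is supplied. So there is no in-paper argument to compare yours against; you are, correctly, treating it as an imported black box, and the sketch has to be judged against the actual content of \cite{GJLL}.

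Within that frame, the setup (broken geodesic $p_0 \to \cdots \to p_n$, bound its deviation from $[p_0,p_n]$) and the per-junction bound $M$ are fine, and you rightly identify non-accumulation as the whole content and the smallness hypothesis as the place where it must be used. But at that point the proposal stops being a proof: you name two candidate strategies (case analysis on a Bestvina--Feighn/Levitt decomposition of $T$; or upper-semicontinuity of $BBT$ over $\overline{cv}(F)$) and carry out neither, and the heuristic offered in their place --- that accumulated back-tracking would ``in the limit'' produce a forbidden non-cyclic arc stabilizer or a stabilized tripod --- is not a correct account of where the contradiction comes from. Large back-tracking at a prefix $g$ of $w$ means $i_p([1,g])$ and $i_p([g,w])$ overlap along a long arc; this exhibits a subarc of $T$ traversed twice by the image of a single geodesic of $X(F,A)$, but it does not by itself produce an element of $F$ fixing that arc, so there is no immediate clash with very-smallness. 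The argument of \cite{GJLL} is instead a direct quantitative generalization of Cooper's Bounded Cancellation Lemma: one bounds $BBT_{T,p}(A)$ explicitly in terms of the Lipschitz constant $\max_{a\in A} d_T(p,ap)$ and the volume of the source quotient graph, with the restriction on arc stabilizers entering through a counting argument that limits how many $F$-translates of a fundamental-domain edge can map over a common subarc of $T$. Your compactness alternative is also not genuinely independent: to make upper-semicontinuity of $BBT$ over $\overline{cv}(F)$ usable you would need a uniform bound on the approximating simplicial trees, which is precisely that quantitative Cooper-type estimate. In short: correct framing and correct identification of the hard step, but the core of the proof is absent and the stated heuristic, as written, would not carry it.
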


The following is an easy corollary of the definitions:
\begin{lem}\label{lem:LL}
Let $F$ be a finitely generated nonabelian free group with a very
small isometric minimal action on an $\mathbb R$-tree $T$. Let $A$
be a free basis of $F$ and let $p\in T$.

Suppose $BBT_{T,p}(A)<C$. Let $u=u_1\dots u_m$ be a freely reduced
product of freely reduced words in $F(A)$, where $m\ge 1$. Then the
following hold:
\begin{enumerate}
\item Let $w\in F(A)$ be cyclically reduced. Then
  \[
\big| ||w||_T-d_T(p,wp) \big|\le 2C.
\]
\item Let $u=u_1\dots u_m$ be a freely reduced product of freely
  reduced words in $F=F(A)$, where $m\ge 1$.  Then we have
  \[\left| d_T(p,up)-\sum_{i=1}^m d_T(p,u_ip) \right|\le 2mC.\]
\item Suppose, in addition, that $u,u_1,\dots, u_m$ are cyclically
  reduced in $F(B)$. Then
  \[
\left| ||u||_T-\sum_{i=1}^m ||u_i||_T  \right|\le 4mC.
\]
\end{enumerate}
\end{lem}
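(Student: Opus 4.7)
The plan is to prove the three parts in the order (2), (1), (3), with bounded back-tracking (Proposition~\ref{prop:BBT}) serving as the central tool throughout. The guiding picture is that the equivariant map $i_p\colon X(F,A)\to T$ may stretch distances, but its failure to send geodesics to geodesics is uniformly controlled by $C$; each part of the lemma is an explicit instance of this control.

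For part (2) I would induct on $m$. Writing $u=u'u_m$ with $u'=u_1\cdots u_{m-1}$, observe that the three points $p$, $u'p$, $up$ in the $\R$-tree $T$ have a well-defined median $c$, and the standard tree identity yields
\[
d_T(p,u'p) + d_T(u'p,up) - d_T(p,up) = 2\,d_T(u'p,c),
\]
where $c$ is the nearest-point projection of $u'p$ onto $[p,up]$. Since $u'$ is an intermediate vertex of the reduced edge-path from $1$ to $u$ in $X(F,A)$, bounded back-tracking gives $d_T(u'p,[p,up])\le C$, so $d_T(u'p,c)\le C$. Using $d_T(u'p,up)=d_T(p,u_mp)$ (by $F$-equivariance of $i_p$) and the inductive hypothesis applied to $u'$, the triangle inequality delivers the claimed bound $2mC$.

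For part (1) I would exploit that, since $w$ is cyclically reduced, the concatenation $\cdots w^{-1}\!\cdot[1,w]\cdot[1,w]\cdots$ is a genuine bi-infinite geodesic in $X(F,A)$ on which $w$ acts by translation. Applying BBT to each finite subsegment $[w^{-n},w^n]$, whose $i_p$-image contains $p=i_p(1)$, gives $d_T(p,[w^{-n}p,w^np])\le C$ for every $n\ge 1$. As $n\to\infty$ these chords exhaust the characteristic set $A_w\subset T$ of $w$, so $d_T(p,A_w)\le C$. The standard identity $d_T(p,wp)=\|w\|_T+2\,d_T(p,A_w)$ then yields $|d_T(p,wp)-\|w\|_T|\le 2C$. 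Part (3) follows from the triangle inequality applied to the telescoping decomposition
\[
\|u\|_T - \sum_{k=1}^m\|u_k\|_T = \bigl(\|u\|_T - d_T(p,up)\bigr) + \Bigl(d_T(p,up) - \sum_{k=1}^m d_T(p,u_kp)\Bigr) + \sum_{k=1}^m\bigl(d_T(p,u_kp) - \|u_k\|_T\bigr),
\]
bounding the outer pieces by part (1) and the middle one by part (2).

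The main obstacle I anticipate is the elliptic case of part (1): one must verify that the chords $[w^{-n}p,w^np]$ really remain close to (or meet) the fix-subtree of $w$, so that the $C$-closeness of $p$ to each chord converts into $d_T(p,A_w)\le C$. Very-smallness of the $F$-action is what rules out degenerate behavior here, since tripod stabilizers are trivial and arc stabilizers maximal cyclic, $w$ permutes the directions at its projection non-trivially, preventing $w^np$ and $w^{-n}p$ from collapsing onto a single ray emanating from $A_w$.
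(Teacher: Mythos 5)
The paper offers no proof of this lemma (it is dispatched as ``an easy corollary of the definitions''), so there is nothing to compare your argument against; it has to stand on its own, and in essence it does. Part (2) via the median identity plus the BBT bound on intermediate vertices, and part (3) via the telescoping decomposition, are exactly the right computations. For part (1), the identity $d_T(p,wp)=\|w\|_T+2\,d_T(p,A_w)$ and the observation that $p=i_p(1)$ lies on the axis of $w$ in $X(F,A)$ (because $w$ is cyclically reduced) are indeed the two key facts, and applying BBT to $[w^{-1},w]$ already suffices --- you do not need to pass to arbitrary $n$ or worry about chords ``exhausting'' the characteristic set.

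The one place you go astray is the last paragraph. The potential obstacle you identify in the elliptic case --- that $w^n p$ and $w^{-n}p$ might land in the same branch at the projection point $q$ of $p$ onto $\mathrm{Fix}(w)$ --- is real, but it simply does not hurt the argument, and invoking very-smallness (trivial tripod stabilizers, maximal cyclic arc stabilizers) to rule it out is a misdiagnosis. What actually saves the day is elementary $\R$-tree separation: $wp$ and $w^{-1}p$ each lie in a branch of $T\setminus\{q\}$ different from the branch $B_0$ containing $p$ (this is just the standard fact that $q$ is the midpoint of $[p,wp]$ for elliptic $w$, which needs no hypothesis beyond the existence of a fixed point), so the geodesic $[w^{-1}p,wp]$ is entirely disjoint from $B_0$. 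Any path from $p$ to that geodesic must therefore pass through $q$, giving $d_T(p,[w^{-1}p,wp])\ge d_T(p,q)=d_T(p,A_w)$, and BBT then yields $d_T(p,A_w)\le C$ regardless of whether $w^{-1}p$ and $wp$ land in the same branch. Very-smallness is used only to guarantee $BBT_{T,p}(A)<\infty$ in the first place (Proposition~\ref{prop:BBT}), not in this step. One small bookkeeping remark: the telescoping bound in part (3) gives $2C+[\text{part (2)}]+2mC$; using the stated $2mC$ from part (2) overshoots $4mC$ slightly, but your induction for part (2) in fact gives $2(m-1)C$, which makes the arithmetic come out to exactly $4mC$.
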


Levitt and Lustig~\cite{LL} prove the following:
\begin{prop}\label{prop:LL}
Let $F$ be a finitely generated nonabelian free group with a very
small isometric action on an $\mathbb R$-tree $T$ with dense orbits.

Then for any $\epsilon>0$ and any $p\in T$ there exists a free basis
$B$ of $F$ such that the following hold:
\begin{enumerate}
\item We have $BBT_{T,p}(B)<\epsilon$.

\item For every $b\in B$ we have $d_T(p,bp)<\epsilon$.
\end{enumerate}
\end{prop}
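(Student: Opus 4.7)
The plan is to obtain $B$ by a sequence of Nielsen transformations applied to an arbitrary initial free basis, using density of the orbit $Fp$ in $T$ to drive the process, and then to show that the resulting Nielsen-reduced basis automatically has small bounded back-tracking constant. I would fix an initial free basis $A = \{a_1, \dots, a_N\}$ of $F$. By Proposition~\ref{prop:BBT}, the orbit map $i_p : X(F,A) \to T$ has finite BBT, so it is an $F$-equivariant quasi-isometric embedding up to that constant; this will serve as the starting point of the shortening procedure.

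The first task is to achieve condition~(2). I would define the total displacement $D(B') := \sum_i d_T(p, b'_i p)$ of a free basis $B' = \{b'_1, \dots, b'_N\}$, and iteratively apply Nielsen moves of the form $b'_i \mapsto (b'_j)^{\pm 1} b'_i$, $b'_i \mapsto b'_i (b'_j)^{\pm 1}$, or $b'_i \mapsto (b'_i)^{-1}$ that strictly reduce $D$. The key input is density of $Fp$ in $T$: whenever the current basis has some $b'_i$ with $d_T(p, b'_i p) \ge \epsilon$, density supplies an element $w \in F$ with $wp$ close to $b'_i p$, and by expressing $w$ as a word in the current basis and performing the corresponding sequence of Nielsen moves, one transfers the bulk of the length of $b'_i$ onto $w$ and cancels it out, giving a net decrease in $D$. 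Termination follows because $D \ge 0$ and each step decreases $D$ by at least a positive amount controlled by the finite BBT and by $\epsilon$.

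To control the BBT of the resulting basis $B$, I would invoke Lemma~\ref{lem:LL}: for any freely reduced word $u = b_1 \dotsm b_m$ in $B$, the discrepancy between $d_T(p, up)$ and $\sum_i d_T(p, b_i p)$ measures the amount of cancellation of the image in $T$. If some pair of consecutive letters $b_i b_{i+1}$ exhibited a cancellation exceeding some small threshold, then the Nielsen move $b_i \mapsto b_i b_{i+1}$ would strictly reduce $D$, contradicting the Nielsen-reduced state. A telescoping estimate along any freely reduced word would then yield $BBT_{T,p}(B) \le C_N \cdot \max_{b \in B} d_T(p,bp)$ for a constant $C_N$ depending only on the rank $N$. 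Applying the first step with $\epsilon / C_N$ in place of $\epsilon$ would then yield both (1) and (2) simultaneously.

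The main technical obstacle is the non-discreteness of the translation-length spectrum: in the classical combinatorial setting, Nielsen reduction terminates because word lengths are integer-valued, whereas here the displacement $D$ takes values in $\mathbb{R}_{\ge 0}$ and a priori a sequence of shortening moves could decrease $D$ by amounts tending to zero without reaching the desired bound. The resolution is to show, using the finite BBT guarantee from Proposition~\ref{prop:BBT} together with density of orbits, that as long as some $b'_i$ still satisfies $d_T(p, b'_i p) \ge \epsilon$, a Nielsen move is available that decreases $D$ by at least some uniform $\delta = \delta(\epsilon, A, T) > 0$. Making this quantitative step precise, and in particular ruling out a drift of vanishing Nielsen improvements, is the delicate heart of the argument.
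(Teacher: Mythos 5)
First, a point of context: the paper does not prove Proposition~\ref{prop:LL} at all; it is quoted verbatim from Levitt--Lustig \cite{LL}, so there is no in-paper argument to compare against. The proof in \cite{LL} is structural, not combinatorial: it rests on the Rips-machine description of very small $F$-trees with dense orbits and on approximating such a tree by free simplicial $F$-trees (equivalently, by finite systems of partial isometries of a small arc through $p$). The small basis $B$ is extracted from a sufficiently fine such approximation, and the BBT estimate comes with it because the approximating simplicial tree maps to $T$ with controlled folding. Your proposal, by contrast, tries to produce $B$ by greedy Nielsen reduction of the displacement functional $D(B') = \sum_i d_T(p,b'_ip)$, which is a genuinely different and, unfortunately, much weaker strategy.

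The central gap is the claim you yourself flag as ``the delicate heart of the argument'': that whenever some generator still has $d_T(p,b'_ip)\ge\epsilon$, a single elementary Nielsen move decreases $D$ by a uniform $\delta(\epsilon,T,p)>0$. This is not merely a quantitative detail left to fill in; it is almost certainly false. A free basis can be a strict local minimum of $D$ (every elementary Nielsen move increases $D$) while $\max_i d_T(p,b'_ip)$ is still far from $0$: in a tree this just requires the $2N$ points $(b'_i)^{\pm1}p$ to have pairwise Gromov products $(\,\cdot\,|\,\cdot\,)_p$ all less than half the relevant displacements, a configuration that the dense-orbits hypothesis does nothing to preclude. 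Overcoming such local minima is exactly the phenomenon that forces the Whitehead/peak-reduction machinery in the simplicial setting, where one must allow $D$ to go up before it comes down; a greedy descent does not terminate at a global (or even small) minimum. Your attempted fix --- ``express $w$ as a word in the current basis and do the corresponding sequence of Nielsen moves'' --- also does not go through as stated: if $w$ involves the letter $b'_i$ itself, the corresponding elementary moves either fail to be Nielsen moves (you cannot pass through $b'_i\mapsto (b'_i)^{\pm2}$ or $b'_i\mapsto 1$), or the generator being modified changes out from under you as you substitute, and in any case the intermediate steps can increase $D$, which contradicts your own stipulation that each step strictly reduce $D$.

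There is a second, independent gap in the BBT step. Your argument is that Nielsen-reducedness forces $(c^{-1}p\,|\,c'p)_p$ to be small for consecutive letters $c,c'\in B^{\pm1}$, and hence a telescoping estimate controls $BBT_{T,p}(B)$. But the only cancellations controlled by elementary Nielsen moves are those between two \emph{distinct} generators, since the moves $b_j\mapsto b_jb_k^{\pm1}$ require $j\neq k$. Nothing in Nielsen-reducedness bounds the backtracking along a power $b_j^m$: if $b_j$ is elliptic with fixed point near $p$ (which very small dense-orbit actions permit), then $d_T(p,b_j^mp)$ stays small for all $m$ while the combinatorial length goes to infinity, so $BBT_{T,p}(B)$ can be arbitrarily large even when $\max_b d_T(p,bp)$ is tiny and no Nielsen move decreases $D$. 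So the inequality $BBT_{T,p}(B)\le C_N\max_b d_T(p,bp)$ you want does not follow from the Nielsen-reduced condition alone, and the ``telescoping estimate'' does not close. These two gaps are why the known proofs of this statement go through the Rips machine rather than through reduction of the displacement functional.
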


If $A$ is a free basis of a free group $F(A)$, for $u\in F(A)$ we
denote by $|u|_A$ the freely reduced length of $u$ with respect to $A$
and we denote by $||u||_A$ the cyclically reduced length of $u$ with
respect to $A$. Thus $||u||_A$ is the translation length of $u$ on the
Cayley tree $X(F(A),A)$ of $F(A)$.

\begin{propdfn}[Double Bounded Cancellation Constant]
Let $A,B$ be two free bases of a finitely generated free group $F$.
Then there exist integers $1\le l \le n$ with the following
properties:

\begin{enumerate}
\item Let $u,v$ be freely reduced words in $F(A)$ such that the word
  $uv$ is freely reduced. Let $u',v'$ be freely reduced words in
  $F(B)$ representing $u,v$ accordingly. Then the maximal terminal
  segment of $u'$ that freely cancels in the product $u'\cdot v'$ has
  length $\le l$.
\item If $u$ is a cyclically reduced word in $F(A)$ and $u'$ is the
  freely reduced form of $u$ in $F(B)$ then
  $\big|||u'||_B-|u|_B\big|\le 2l$.
\item Let $w=uw_0v$ be a freely reduced product of freely reduced words
  in $F(A)$ and suppose that $|u|_A,|v|_A>n$. Let $x$, $y$ be freely
  reduced words in $F(A)$ such that the product $xwy$ is reduced as
  written. Let $x',w',y'$ be the freely reduced forms of $x,w,y$ in
  $F(B)$. Let $w''$ be the maximal subword of $w'$ that is not
  affected by the free cancellations in the product $x'w'y'$. Then
  $w''$ is nonempty.

Let $z_1',z_2'$ be freely reduced words in $F(B)$ such that
$z_1'w''z_2''$ is freely reduced as written. Let $z_1,z_2,\tilde w$
be the freely reduced words in $F(A)$ representing $z_1',z_2',w''$
accordingly. Then the maximal subword $W$ of $\tilde w$ that is not
affected by the free cancellations in $z_1\tilde w z_2$ is nonempty
and, in addition, $W$ contains $w_0$ as a subword. Moreover, if
$z_1'w''z_2'$ is cyclically reduced in $F(B)$, then the maximal
subword of $\tilde w$ that is not affected by the free and cyclic
reduction of $z_1\tilde w z_2$, is nonempty and contains $w_0$ as a
subword.
\end{enumerate}

The existence of $l\ge 1$ as above follows from the bounded
cancellation lemma~\cite{Coo}. It is not hard to deduce the
existence of $n\ge 1$ from the existence of $l$.

We refer to the smallest integer $n=n_{A,B}\ge 1$ satisfying
condition (3) above as the \emph{double bounded cancellation
constant for $A$, $B$}.

 Note that the definition of $n_{A,B}$
given above is not symmetric and it is possible that $n_{A,B}\ne
n_{B,A}$. Note also that conditions (1), (2) in the above definition
hold with $l=n_{A,B}$.

\end{propdfn}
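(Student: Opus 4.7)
The plan is to establish the three conditions in sequence: (1) from the classical bounded cancellation lemma of Cooper applied to the change-of-basis quasi-isometry, (2) as a short consequence of (1) via the power-limit formula for translation length, and (3) from a careful two-round cancellation accounting that uses (1) in both directions.

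For (1), view the identity map $F \to F$ as an $F$-equivariant PL map $f \colon X(F,A) \to X(F,B)$ sending $1 \mapsto 1$ and sending each edge of $X(F,A)$ to the reduced edge-path that labels it. This is a Lipschitz map between free simplicial $F$-trees, so Cooper's bounded cancellation lemma gives a constant $C$ such that $f$ carries every geodesic of $X(F,A)$ into a $C$-neighborhood of the geodesic joining its $f$-image endpoints in $X(F,B)$. Rephrased as a word statement, this is precisely the cancellation bound in (1) with $l = C$. For (2), if $u$ is cyclically reduced in $F(A)$ then $u^n$ is freely reduced in $F(A)$ for every $n \ge 1$, so iterating (1) on the $n-1$ junctions of the $F(B)$-concatenation $(u')^n$ yields $|u^n|_B \ge n|u'|_B - 2(n-1) l$; dividing by $n$ and taking $n \to \infty$ gives $||u'||_B \ge |u'|_B - 2l$, while $||u'||_B \le |u'|_B = |u|_B$ is automatic.

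For (3), apply (1) in both directions and let $\ell$ be the maximum of the two bounded-cancellation constants. Let $K$ be a bilipschitz constant for the change-of-basis map, so that $|w|_A \le K|w|_B$ and $|w|_B \le K|w|_A$ for every $w \in F$. Take $n > \ell(K+1)$. In the first round, the freely-reducedness of $xwy$ in $F(A)$ together with (1) implies that each of the cancellations in $x'w'$ and $w'y'$ removes at most $\ell$ letters, so $w''$ is $w'$ trimmed by $\le \ell$ at each end, and is nonempty because $|w'|_B \ge |w|_A / K > 2n/K > 2\ell$. Writing $w' = s \cdot w'' \cdot t$ as an $F(B)$-word with $|s|_B, |t|_B \le \ell$, we have $w'' = s^{-1} w t^{-1}$ in $F$, so $\tilde w$ is the freely reduced $F(A)$-form of
\[
s^{-1}_A \cdot u \cdot w_0 \cdot v \cdot t^{-1}_A,
\]
where $|s^{-1}_A|_A, |t^{-1}_A|_A \le K\ell$. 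Since $uw_0 v$ is freely reduced in $F(A)$ and $|u|_A, |v|_A > n > K\ell$, the cancellation in $s^{-1}_A \cdot u$ removes at most $K\ell$ letters from the start of $u$ and cannot cascade past the remaining suffix of $u$ into $w_0$, and symmetrically on the right. Hence $\tilde w = \sigma \cdot u_\ast \cdot w_0 \cdot v_\ast \cdot \tau$ as a freely reduced word, with $|u_\ast|_A, |v_\ast|_A \ge |u|_A - K\ell > \ell$. In the second round, (1) in the $(B,A)$ direction bounds each of the cancellations in $z_1 \tilde w$ and $\tilde w z_2$ (and any cyclic reduction at the outermost ends) by $\ell$, so $W$ is $\tilde w$ with at most $\ell$ letters removed from each end, and therefore still contains $w_0$ as a subword. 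The cyclically-reduced variant is handled by the same argument upon slightly enlarging $n$ to absorb the cyclic-reduction cancellation.

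The main obstacle is controlling possible cascading cancellation in the $F(A)$-reduction of $s^{-1}_A \cdot u \cdot w_0 \cdot v \cdot t^{-1}_A$: one must verify that when $|u|_A$ is large relative to $K\ell$, cancellation remains confined to the $s^{-1}_A \cdot u$ junction and never eats all of $u$ to continue into $w_0$, and symmetrically for $v$. This is what forces the bilipschitz constant $K$ into the final choice of $n$.
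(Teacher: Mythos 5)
Your argument is correct and fills in exactly what the paper leaves implicit (the paper supplies no details beyond citing Cooper's bounded cancellation lemma and asserting that the passage from $l$ to $n$ is ``not hard''). Your derivation of part (1) from Cooper's lemma via the change-of-basis PL map, the power-limit argument for part (2), and the two-round cancellation accounting with the bilipschitz constant $K$ for part (3) are all sound; in particular your key observation that the first-round loss is at most $l$ per end of $w'$ (hence $K\ell$ per end of $u w_0 v$ after converting back to $F(A)$), and the second-round loss is at most $\ell$ per end of $\tilde w$, so $n > \ell(K+1)$ keeps $w_0$ intact, is precisely the right bookkeeping. The only place you wave a hand is the cyclic-reduction variant, but your invocation of part (2) in the $(B,A)$ direction — to bound the cyclic-reduction loss by $2\ell$ and then ``slightly enlarge $n$'' — is the correct fix and completes the proof.
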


\begin{defn}
Let $A,B$ be two free bases of a finitely generated free group $F$.

Let $C=\max_{a\in A}|a|_B$.  We call $C=C_{A,B}$ the \emph{Lipshitz
constant for $A$, $B$}.
\end{defn}

\section{The ``if'' case of the main result}\label{sect:if}

In this section we will establish the ``if'' implication in
Theorem~\ref{thm:main}. Namely, we will prove that for an arbitrary
very small action of $F$ on an $\mathbb R$-tree $T$ the condition
$supp(\mu)\subseteq L^2(T)$ implies that $\langle T,\mu\rangle=0$.

\begin{rem}\label{rem:norm}
Suppose $\mu\in Curr(F), \mu\ne 0$ and let
\[
\mu=\lim_{i\to\infty} \lambda_i \eta_{g_i}
\]
where $\lambda_i\ge 0$ and $g_i\in F$. Then for $\lambda_i'\ge 0$ we
have
\[
\mu=\lim_{i\to\infty} \lambda_i' \eta_{g_i} \quad\iff\quad
\lim_{i\to\infty} \frac{\lambda_i'}{\lambda_i}=1.
\]
In particular (see \cite{Ka1,Ka2}), if $A$ is a free basis of $F$, and $\mu\in Curr(F)$ is such that
$||\mu||_A=1$ then for any $\lambda_i\ge 0$, $g_i\in F$ with $\displaystyle \mu=\lim_{i\to\infty} \lambda_i \eta_{g_i}$ we have:
\[
\mu=\lim_{i\to\infty} \frac{1}{||g_i||_A} \eta_{g_i}.
\]
\end{rem}

\begin{thm}\label{thm:main2}
Let $F$ be a finitely generated nonabelian free group. Let $T$ be a
$\mathbb R$-tree with a very small minimal isometric action of $F$.
Let $\mu\in Curr(F)$ be such that $supp(\mu)\subseteq L^2 (T)$. Then $\langle T,\mu\rangle=0$.
\end{thm}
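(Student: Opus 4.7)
My plan is to combine the definition of $L^2(T)$ with the bounded back-tracking property for very small $F$-trees, to derive a \emph{uniform} displacement estimate for all words in $supp_A(\mu)$, and then to estimate the translation length of long approximating elements by partitioning their cyclic words into blocks of such words. Concretely, I would fix a free basis $A$ of $F$ and a point $p\in T$, set $C:=BBT_{T,p}(A)$ (finite by Proposition~\ref{prop:BBT}) and $D:=\max_{a\in A^{\pm 1}} d_T(p,ap)$, and by Remark~\ref{rem:norm} approximate $\mu=\lim_i\lambda_i\eta_{g_i}$ with each $g_i\in F$ cyclically reduced. Passing to powers if necessary, I arrange $L_i:=|g_i|_A\to\infty$, so that $\lambda_iL_i\to \|\mu\|_A$ and $\lambda_i\to 0$ (if $\mu=0$ there is nothing to prove).

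The crucial step is a uniform displacement estimate: for any $\eta>0$ and any $v\in supp_A(\mu)$, independent of $|v|_A$,
\[
d_T(p,vp)\;\le\;\eta+4C.
\]
Indeed, $supp_A(\mu)\subseteq L^2(T)_A$ by hypothesis, so Definition~\ref{defn:dual} supplies a cyclically reduced $z=\ell v r\in F(A)$ with $\|z\|_T\le\eta$. Lemma~\ref{lem:LL}(1) then gives $d_T(p,zp)\le\eta+2C$, and BBT applied to the $F$-equivariant Lipschitz map $i_p:X(F,A)\to T$ on the segment $[1,z]$ forces both $i_p(\ell)=\ell p$ and $i_p(\ell v)=\ell vp$ to lie within $C$ of the short geodesic $[p,zp]$; the triangle inequality combined with $F$-equivariance of $i_p$ yields the stated bound.

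For each fixed $N\ge 1$ I then partition (cyclically) each $g_i$ into $M_i=\lfloor L_i/N\rfloor$ non-overlapping blocks of length $N$ plus a remainder of length $<N$. Call a block \emph{bad} if it is not in $supp_A(\mu)$, and let $K_i$ be the number of bad blocks; each bad block gives an occurrence in $\eta_{g_i}$ of some length-$N$ word outside $supp_A(\mu)$, and since there are only finitely many such words with $\lambda_i\langle v,\eta_{g_i}\rangle_A\to\langle v,\mu\rangle_A=0$ for each of them, $\lambda_iK_i\to 0$. Applying Lemma~\ref{lem:LL}(1)--(2) to the freely reduced decomposition $g_i=u_1\cdots u_{M_i}r_i$, together with the uniform estimate on good blocks and the trivial bound $ND$ on bad blocks and on the remainder, yields
\[
\|g_i\|_T\;\le\;M_i(\eta+6C)+K_i\cdot ND+ND+4C.
\]
Multiplying by $\lambda_i$ and letting $i\to\infty$ gives $\langle T,\mu\rangle\le \|\mu\|_A(\eta+6C)/N$; letting $N\to\infty$ then forces $\langle T,\mu\rangle=0$.

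The main obstacle is the uniform displacement estimate of the second paragraph, which controls $d_T(p,vp)$ for $v$ of arbitrary length by the translation length of \emph{any} cyclically reduced extension of $v$, independent of the size of the extending factors $\ell,r$. Without the very-small hypothesis (hence without finite BBT), $\ell p$ and $\ell vp$ could be dragged arbitrarily far from $[p,zp]$ and the bound would be lost. The remainder of the plan is standard amortization: each good block of length $N$ contributes at most $\eta+6C$, producing a $1/N$ saving that is absent from any trivial sum of per-letter displacements.
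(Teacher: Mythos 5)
Your proposal is correct and follows essentially the same strategy as the paper's proof: fix a base point, use the finiteness of the bounded back-tracking constant together with the definition of $L^2(T)$ to obtain a uniform, $|v|_A$-independent displacement bound $d_T(p,vp)\le\eta+4C$ for $v\in supp_A(\mu)$, then decompose long approximating cyclic words into length-$N$ blocks, bound the bad-block frequency by the convergence $\lambda_i\eta_{g_i}\to\mu$, and amortize over $N$. The only cosmetic differences are that the paper passes to powers $w_i^N$ to avoid a remainder term, whereas you keep it and absorb it into the error, and that the paper derives the per-block estimate via Lemma~\ref{lem:LL} rather than invoking BBT directly on the interior vertices $\ell$ and $\ell v$ of the segment $[1,z]$; both yield the same $O(1/N)$ saving.
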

\begin{proof}
Let $A$ be a free basis of $F$. We may assume that $||\mu||_A=1$. By
Remark~\ref{rem:norm}, it suffices to prove that whenever $w_i$ is a
sequence of reduced cyclic words in $F(A)$ such that
$\mu=\lim_{n\to\infty}\frac{\eta_{w_i}}{||w_i||_A}$ then
$\lim_{i\to\infty} \frac{||w_i||_T}{||w_i||_A}=0$. Let $w_i$ be such
a sequence of cyclic words. Note that
$\frac{||w_i^t||_T}{||w_i^t||_A}=\frac{||w_i||_T}{||w_i||_A}$ and
$\frac{\eta_{w_i^t}}{||w_i^t||_A}=\frac{\eta_{w_i}}{||w_i||_A}$ for
any integer $t\ge 1$. Thus, by taking powers if necessary, we may
assume that $\lim_{i\to\infty}||w_i||_A=\infty$.

Let $p\in T$.  Let $C_1>0$ be such that $BBT_{T,p}(A)<C_1$. Recall
that such $C_1$ exists by Proposition~\ref{prop:BBT}. Also, we put
$C_2=\max_{a\in A} d_T(p,ap)$.

Let $\epsilon>0$ be arbitrary. Let $N\ge 1$ be such that
$\frac{1+10C_1}{N}\le \epsilon/2$. Choose $\epsilon_1>0$ such that
$C_2N\epsilon_1\le \epsilon/2$.  Put $w_i'=w_i^N$. Hence
$||w_i'||_A=N||w_i||_A$ is divisible by $N$ for every $i\ge 1$.
Then
$\frac{\eta_{w_i'}}{||w_i'||_A}=\frac{\eta_{w_i}}{||w_i||_A}$ and hence
\[
\mu=\lim_{i\to\infty} \frac{\eta_{w_i'}}{||w_i'||_A}.
\]

Write $w_i'$ as a cyclic concatenation $u_i=y_1\dots y_m$ where
$m=||w_i'||_A/N$ and where each $y_j\in F(A)$ has $|y_j|_A=N$. We say
that $y_j$ is \emph{good} if $y_j\in supp_A(\mu)$ and that $y_j$ is
\emph{bad} otherwise.  Write $m=m_{good}+m_{bad}$ where $m_{good}$
is the number of those $j=1,\dots, m$ for which $y_j$ is good. Since
$N$ is fixed and $\lim_{i\to\infty}
\frac{\eta_{w_i'}}{||w_i'||_A}=\mu$, for any fixed freely reduced word
$v\in F(A)$ of length $N$, the symmetrized frequencies $\frac{\langle v^{\pm 1}, w_i'\rangle_A}{||w_i'||_A}$
of $v$ in $w_i'$ converge to $\langle v,\mu\rangle_A$ as
$i\to\infty$. In particular, if $v\not\in supp_A(\mu)$, these
frequencies converge to zero. Therefore there exists some $i_0\ge 1$ such
that for every $i\ge i_0$ we have $m_{bad}\le \epsilon_1 ||w_i'||_A$.
Suppose now that $i\ge i_0$ is arbitrary.

Let $z_i\in F(A)$ be the freely reduced word obtained as a
(non-cyclic) concatenation $z_i=y_1\dots y_m$.

Suppose first that $y_j$ is good. Since $supp(\mu)\subseteq L^2
(T)$, there exists a cyclically reduced word $u_j\in F(A)$
containing $y_j$ as an initial segment such that $||u_j||_T\le 1$.
Then by Lemma~\ref{lem:LL} we have $d_T(p, u_jp)\le 1+2C_1$. Write
$u_j$ as a reduced product $u_j=y_ju_j'$. Then Lemma~\ref{lem:LL}
implies that
\[
d_T(p,y_jp)+d_T(p,u_j'p)\le d_T(p,u_jp)+2C_1\le 1+4C_1
\]
and hence
\[
d_T(p,y_jp)\le 1+4C_1.
\]

Suppose that $y_j$ is bad. Then obviously $d_T(p,y_jp)\le C_2N$.

Recall that $m_{bad}\le \epsilon_1 ||w_i'||_A$. Recall also that
$z_i=y_1\dots y_m\in F(A)$. Hence by Lemma~\ref{lem:LL}
\[
||w_i'||_T=||z_i||_T\le d_T(p,z_ip)\le m(1+4C_1)+\epsilon_1 ||w_i'||_A
C_2N.
\]

Then for every $i\ge i_0$
\[
\frac{||w_i'||_T}{||w_i'||_A}\le \frac{1+10C_1}{N}+ C_2N\epsilon_1\le
\epsilon,
\]
and hence
\[
0\le \langle T, \mu\rangle=\lim_{i\to\infty}
\frac{||w_i'||_T}{||w_i'||_A}\le \epsilon.
\]

Since $\epsilon>0$ was arbitrary, this implies that
$\langle T, \mu\rangle=0$, as required.
\end{proof}

\section{The dense orbits case}

\begin{prop}\label{prop:dense1}
Let $F$ be a finitely generated nonabelian free group. Let  $T$ be
an $\mathbb R$-tree with a very small minimal isometric action of
$F$  such that this action has dense orbits.

Let $\mu\in Curr(F)$ be such that $\langle T,\mu\rangle=0$.
Then $supp(\mu)\subseteq L^2(T)$.
\end{prop}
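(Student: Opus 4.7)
The plan is to verify membership in $L^2(T)$ by producing, for each open neighborhood $U$ of $(\xi_1,\xi_2)$ in $\partial^2 F$ and each $\delta>0$, an element $w\in F$ with $||w||_T\le \delta$ whose axis (that is, the $A$-geodesic of $(w^{-\infty},w^\infty)$) meets $U$; equivalently, a sequence $w_n\in F$ with $||w_n||_T\to 0$ and $(w_n^{-\infty},w_n^\infty)\to(\xi_1,\xi_2)$ in $\partial^2 F$. Fix a basis $A$ of $F$ and a point $p\in T$, and let $v$ be a freely reduced word in $F(A)$ labelling a subsegment of the $A$-geodesic from $\xi_1$ to $\xi_2$. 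Since $(\xi_1,\xi_2)\in supp(\mu)$, the cylinder $Cyl_A(v)$ has positive $\mu$-measure, so $\langle v,\mu\rangle_A>0$.

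The construction uses two ingredients. First, a rational approximation $\mu=\lim_i \lambda_i \eta_{g_i}$ with $\lambda_i = 1/||g_i||_A$ and $||g_i||_A\to\infty$ (after taking powers if needed). The hypothesis $\langle T,\mu\rangle=0$ yields $||g_i||_T/||g_i||_A\to 0$, while $\langle v,\mu\rangle_A>0$ gives $k\ge c\,||g_i||_A$ occurrences of $v^{\pm 1}$ in the $A$-cyclic form of $g_i$ for large $i$, with $c>0$. Second, to make the bounded back-tracking constant arbitrarily small, I invoke Proposition~\ref{prop:LL} to obtain, for any prescribed $\epsilon>0$, a basis $B=B_\epsilon$ of $F$ with $BBT_{T,p}(B)<\epsilon$ and $d_T(p,bp)<\epsilon$ for every $b\in B$. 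By the double bounded cancellation for the pair $(A,B)$, each sufficiently long $A$-occurrence of $v$ in $g_i$ forces an occurrence of a fixed long middle portion of the $B$-form $v_B$ of $v$ in the $B$-cyclic form of $g_i$.

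The heart of the argument is a pigeonhole on the axis $L_i\subset T$ of $g_i$. The start positions of those $B$-occurrences of (the middle portion of) $v_B$ map, via the equivariant map $X(F,B)\to T$ that is linear on edges and sends $1$ to $p$, to points $q_j\in T$ lying within $\epsilon$ of $L_i$; projecting to the quotient circle $L_i/\langle g_i\rangle$ of length $||g_i||_T$ gives $k$ points on that circle, so pigeonhole produces two of them, at axis positions $s_j$ and $s_{j'}+m||g_i||_B$, with
\[
d_T(q_j,g_i^m q_{j'})\le 2\epsilon+||g_i||_T/k.
\]
The cyclic $B$-subword $w$ of $g_i$ running between these two occurrences then satisfies $||w||_T\le 2\epsilon+||g_i||_T/k$, which tends to $2\epsilon$ as $i\to\infty$ since $||g_i||_T/k\le 2\lambda_i||g_i||_T/\langle v,\mu\rangle_A\to 0$. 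Letting $\epsilon_n\to 0$ (with a different basis $B_{\epsilon_n}$ for each $n$) and choosing $|v_n|_A\to\infty$ then produces the desired sequence $w_n$. The principal technical obstacle is to manage the two successive rounds of bounded cancellation (first from the $A$-form of $g_i$ to its $B$-form, then from the $B$-cyclic form of $w$ back to its $A$-cyclic form) so that $v$ really appears as a cyclic subword of the $A$-axis of $w$; this is handled by taking $v$ much longer than the double bounded cancellation constants $n_{A,B}$ and $n_{B,A}$, and by extending the $B$-subword $w$ with a buffer of at least $n_{B,A}$ additional letters on each side of the two chosen occurrences, so that $v$ lies safely in the interior of $w$ and survives all free and cyclic cancellations. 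The elliptic case ($||g_i||_T=0$) is easier, since then all $q_j$ already cluster within $\epsilon$ of a single $F$-fixed point in $T$ and any cyclic $B$-subword of $g_i$ automatically has $T$-translation length at most $2\epsilon$.
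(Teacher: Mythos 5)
Your overall scaffolding matches the paper's: approximate $\mu$ by normalized rational currents $\eta_{g_i}/\|g_i\|_A$, use Proposition~\ref{prop:LL} to pass to a basis $B=B_\epsilon$ with $BBT_{T,p}(B)<\epsilon$, and invoke the double bounded cancellation constant to control the two rounds of rewriting. The core combinatorial step is genuinely different, though: you pigeonhole on positions along the $T$-axis $L_i/\langle g_i\rangle$ to find two nearby occurrences, whereas the paper subdivides the cyclic word $w_i$ as $y_1\cdots y_m$ around the chosen occurrences, converts to $B$, estimates $\sum_j\|x_j\|_T$ via Lemma~\ref{lem:LL}, and \emph{averages} to extract one piece $x_j$ with $\|x_j\|_T$ small. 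Both approaches extract a short element containing $v$; pigeonhole-on-axis and averaging-over-pieces are dual ways of doing this, and in the end both need the same buffer bookkeeping. That said, your sketch has two real problems.

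First, the claim that each $A$-occurrence of $v$ in $g_i$ ``forces an occurrence of a fixed long middle portion of the $B$-form $v_B$'' in the $B$-cyclic form of $g_i$ is false: the $B$-form of an $A$-subword depends on its context, not just on the subword itself (a short example with $A=\{a,b\}$, $B=\{ab,b\}$ already shows that the $B$-form of $aba$ need not appear as a subword of the $B$-form of $abab$). What part~(3) of the paper's Proposition-Definition on the double bounded cancellation constant actually gives you is a \emph{context-dependent} surviving $B$-subword $W''_j$ of the $B$-form of the chunk $y_j$ around the $j$-th occurrence; it is nonempty and sits on the $B$-axis, but it is not a fixed word. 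Your pigeonhole has to be run on the start positions of these $W''_j$, not on occurrences of a single fixed $B$-word — as stated, the set you pigeonhole over could be empty.

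Second, the cyclic-reduction bookkeeping at the end is underdeveloped. The pigeonhole can return two \emph{consecutive} occurrences $j, j'$, so the extracted $B$-subword $w$ contains the $v$-occurrence only near one of its ends; after passing from $B$ back to $A$ and cyclically reducing, that occurrence can be destroyed unless the buffer around it is larger than both the double bounded cancellation constants $n_{A,B}, n_{B,A}$ \emph{and} the amount lost to cyclic reduction of a $B$-geodesic viewed as an $A$-quasigeodesic. All of these depend on $B_\epsilon$ and hence on $\epsilon$, so the buffer length must be chosen after $B$ is fixed — exactly the role that the $n$-neighborhood subdivision plays in the paper's proof. Once you rebuild the argument along those lines it does go through, but you essentially re-derive the paper's machinery; the pigeonhole does not buy a shortcut here. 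Finally, the elliptic case is simpler than you suggest: if $\|g_i\|_T=0$ for infinitely many $i$, the $A$-cyclic form of such a $g_i$ already contains $v$ and has $T$-translation length $0$, so no axis argument is needed at all.
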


\begin{proof}
Choose a free basis $A$ of $F$. By re-scaling we may assume that
$||\mu||_A=1$. Then there exists a sequence of reduced cyclic words $w_i$ in $F(A)$
such that $\lim_{i\to\infty} \frac{\eta_{w_i}}{||w_i||}=\mu$. Since $\langle T,\mu\rangle=0$, we have
$\lim_{i\to\infty} \frac{||w_i||_T}{||w_i||_A}=0$.

Again, recall that
$\frac{||w_i^t||_T}{||w_i^t||_A}=\frac{||w_i||_T}{||w_i||_A}$ and
$\frac{\eta_{w_i^t}}{||w_i^t||_A}=\frac{\eta_{w_i}}{||w_i||_A}$ for
any integer $t\ge 1$. Thus, by taking powers if necessary, we may
assume that $\lim_{i\to\infty}||w_i||_A=\infty$.

Let $v\in F(A)$ be a freely reduced word with $\langle
v,\mu\rangle_A=\alpha>0$. We need to prove that for any $\epsilon>0$
there exists a cyclically reduced word $w$ in $F(A)$ containing $v$
as a subword and such that $||w||_T\le \epsilon$.

Since $||\mu||_A=1$, there exists a sequence of cyclic word $w_i$ in
$F(A)$ such that
\[
\lim_{i\to\infty}\frac{\eta_{w_i}}{||w_i||_A}=\mu
\]
and such that
\[
\lim_{i\to\infty} \frac{||w_i||_T}{||w_i||_A}=0.
\]

 Hence
 \[
\lim_{i\to\infty}\frac{\langle v^{\pm 1}, w_i\rangle}{||w_i||} =
\langle v,\mu\rangle_A=\alpha>0.
\]
Choose $i_0\ge 1$ such that for every $i\ge i_0$ we have
\[
\alpha/2\le \frac{\langle v^{\pm 1}, w_i\rangle}{||w_i||} \le
2\alpha.
\]

Let $\epsilon>0$ be arbitrary. By Proposition~\ref{prop:LL} there is
some free basis $B$ of $F$ and a point $p\in T$ such that
$BBT_{T,p}(B) <\epsilon$ and $d_T(p,bp)<\epsilon$ for every $b\in B$.
Let $n=n_{A,B}\ge 1$ be the double bounded cancellation constant for
$A,B$ and let $C=C_{A,B}\ge 1$ be the Lipshitz constant for $A,B$.

Let $i\ge i_0$.  Choose a maximal collection $v_1,\dots, v_m$ of
occurrences of $v^{\pm 1}$ in $w_i$ such that the $n$-neighborhoods
of these occurrences in $w_i$ do not overlap. Note that if an
occurrence of $v^{\pm 1}$ begins outside of the union of the
$n$-neighborhoods of $v_1,\dots, v_m$ in $w_i$, then, by maximality,
the beginning of that occurrence is at most $2n+|v|$ away
from the union of the $n$-neighborhoods of $v_1,\dots, v_m$. This
implies that the total number of occurrences of $v^{\pm 1}$ in $w_i$
satisfies:

\[
\langle v^{\pm 1}, w_i\rangle \le 2m(2n+|v|).
\]
Hence
\[
\frac{m}{||w_i||_A}\ge \frac{\langle v^{\pm 1},
w_i\rangle}{||w_i||_A (4n+2|v|)}\ge
\frac{\alpha}{2}\frac{1}{4n+2|v|}.
\]
Thus:
\[
\frac{\alpha}{K}\le \frac{m}{||w_i||}\le 2\alpha,
\]
where $K=8n+4|v|$.

Recall that $\lim_{i\to\infty} \frac{||w_i||_T}{||w_i||_A}=0$. Let
$\epsilon_1>0$ be such that $\epsilon_1 \frac{K}{\alpha}<\epsilon$.
We may assume that $i\ge i_0$ was chosen big enough so that
$\frac{||w_i||_T}{||w_i||_A}\le \epsilon_1$.

We now subdivide the cyclic word $w_i$ as a cyclic concatenation
$w=y_1\dots y_m$ where each segment $y_j\in F(A)$ of $w_i$ contains
the $n$-neighborhood of $v_j$ in $w_i$ for $j=1,\dots, m$. For
$j=1,\dots, m$ let $y_j'$ be the freely reduced word in $F(B)$
representing $y_j$.

Let $z_j$ be the portion of $y_j'$ that survives after the free
reductions in $y_{j-1}'y_j'y_{j+1}'$. Note that $z_j$ is nonempty by
definition of $n=n_{A,B}$. Then the cyclic concatenation
$w_i'=y_1'\dots y_m'$ is the cyclic word in $F(B)$ representing the
same conjugacy class as $w_i$. Note that the words $y_j'$ need not be
cyclically reduced. We choose letters $b_j\in B^{\pm 1}$ such that
each $x_j:=b_jz_j$ is cyclically reduced over $B$ and such that
$x_1\dots x_m$ is a freely reduced and cyclically reduced word over
$B$. Let $w_i''$ be the cyclic word obtained by cyclic concatenation
$w_i''=x_1\dots x_m$.

By Lemma~\ref{lem:LL} we have
\[
\left| d_T(p,y_1'\dots y_m' p)-\sum_{j=1}^m d_T(p,z_jp) \right|\le
4m\epsilon,
\]
\[
\big|  d(p,x_jp)-d(p,z_jp)\big| \le \epsilon   , \text{ and }
\]
\[
\left| d_T(p,x_1\dots x_m p)-\sum_{j=1}^m d_T(p,z_jp) \right|\le
4m\epsilon.
\]
Hence
\[
\left| d_T(p,y_1'\dots y_m' p)-d_T(p,x_1\dots x_m p)\right|\le
12m\epsilon.
\]
Since both $y_1'\dots y_m'$ and $x_1\dots x_m$ are cyclically
reduced, Lemma~\ref{lem:LL} also implies
\begin{gather*}
\left| d_T(p,y_1'\dots y_m' p)-||w_i'||_T\right|\le 2\epsilon,\\
\left| d_T(p,x_1\dots x_m p)-||w_i''||_T\right|\le 2\epsilon, \text{
and hence }\\
\left| ||w_i'||_T-||w_i''||_T \right|\le 12m\epsilon+4\epsilon.
\end{gather*}

Thus
\[
||w_i''||_T\le ||w_i'||_T+12m\epsilon+4\epsilon.
\]
 Since $x_1,\dots, x_m$ are cyclically reduced, Lemma~\ref{lem:LL}
 again implies that
 \[
\sum_{j=1}^m||x_j||_T\le ||w_i''||_T+4m\epsilon\le
||w_i'||_T+16m\epsilon+4\epsilon.
\]
Therefore there exists $j$ such that \[||x_j||_T\le
\frac{||w_i'||_T}{m}+16\epsilon+\frac{4\epsilon}{m}\le
\frac{||w_i'||_T}{m}+20\epsilon.\] Recall that $w_i$ and $w_i'$
represent the same conjugacy class in $F$ and so
$||w_i||_T=||w_i'||_T$. Thus
\[
||x_j||_T\le
\frac{||w_i||_T}{m}+20\epsilon=\frac{||w_i||_T}{||w_i||_A}
\frac{||w_i||_A}{m}+20\epsilon\le \epsilon_1\frac{K}{\alpha}
+20\epsilon\le 21\epsilon
\]
where the last inequality holds by the choice of $\epsilon_1$.

We now rewrite $x_j=b_jz_j$ as a freely reduced word in $F(A)$ and
then cyclically reduce the result to get a cyclically reduced word
$s_j$ in $F(A)$. By definition of $n=n_{A,B}$, the construction of
$x_j$ implies that the occurrence $v_j$ of $v^{\pm 1}$ in $y_j$
survives intact in $s_j$. Thus $s_j$ is a cyclically reduced word in
$F(A)$ containing $v^{\pm 1}$ as a subword and satisfying
$||s_j||_T\le 21\epsilon$. Since $\epsilon>0$ was arbitrary, this
implies that $v\in L^2(T)$, as required.
\end{proof}

\section{Approximating the word metric for a graph of groups}

\begin{conv}
If $G=\langle S\rangle$ is a group with a finite-generating set $S$,
we say that a cyclic word $w$ in $S^{\pm 1}$ is a \emph{cyclic
$\lambda$-quasigeodesic} with respect to $d_S$ if for every vertex
on $w$ cutting $w$ open at that vertex produced a word that is
$\lambda$-quasigeodesic. We will denote the Cayley graph of $X$ with
respect to $S$ by $X(G,S)$.
\end{conv}

\begin{lem}\label{lem:subst}
Let $G$ be a word-hyperbolic group with a fixed finite generating
set $S$ and let $\lambda>0$. Then there exists
$\lambda'=\lambda'(\lambda,S,G)>0$ with the following property.
Suppose that $w$ is a cyclic word in $S^{\pm 1}$ that is a cyclic
$\lambda$-quasigeodesic for $G$ with respect to $d_S$. Let $w$ be
subdivided as a cyclic concatenation $w=v_1\dots v_m$ where each
$v_i$ is a $\lambda$-quasigeodesic word in $S^{\pm 1}$. For
$i=1,\dots, m$ let $z_i$ be a another $\lambda$-quasigeodesic word
representing the same element of $G$ as $v_i$. Let $w'$ be the
cyclic word obtained by a cyclic concatenation $w'=z_1\dots z_m$.
Then $w'$ is a cyclic $\lambda'$-quasigeodesic.
\end{lem}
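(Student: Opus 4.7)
The plan is to use the Morse stability lemma for quasi-geodesics in $\delta$-hyperbolic spaces to track the unwrapped cyclic path $w'$ closely by $w$ in the Cayley graph $X := X(G, S)$, and then to transfer the cyclic $\lambda$-quasigeodesicity of $w$ to a cyclic $\lambda'$-quasigeodesicity of $w'$ via this tracking.

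First I would fix a hyperbolicity constant $\delta$ for $X$ and apply the Morse lemma to obtain a constant $M = M(\lambda, \delta)$ such that any two $\lambda$-quasi-geodesic paths in $X$ with the same pair of endpoints lie within Hausdorff distance $M$ of each other. Unwrapping $w$ and $w'$ to bi-infinite periodic paths in $X$, they pass through the same sequence of subdivision vertices $\ldots, p_{j-1}, p_j, p_{j+1}, \ldots$; between $p_{j-1}$ and $p_j$ the words $v_j$ (on $w$) and $z_j$ (on $w'$) are both $\lambda$-quasi-geodesic with the same endpoints and thus fellow-travel at Hausdorff distance at most $M$. The quasi-geodesic inequalities applied to $v_j$ and $z_j$ as representatives of the same group element also yield a length comparison $|z_j| \le C_0(|v_j|+1)$ for some $C_0 = C_0(\lambda)$.

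Next, to verify the cyclic quasi-geodesic condition for $w'$, I would take any two vertices $q_1, q_2$ on the unwrapped $w'$, lying in pieces $z_{i_1}$ and $z_{i_2}$ respectively (the case $i_1 = i_2$ is immediate from $\lambda$-quasigeodesicity of the single piece, so we may assume $i_1 < i_2$), and find via the Morse bound reference points $r_1 \in v_{i_1}$ and $r_2 \in v_{i_2}$ on $w$ with $d_S(q_1, r_1), d_S(q_2, r_2) \le M$. The corresponding arc of $w$ from $r_1$ to $r_2$ through $p_{i_1}, \ldots, p_{i_2 - 1}$ is a subarc of a $\lambda$-quasi-geodesic word (any suitable cut-open version of $w$), hence its length $\ell^*$ satisfies $\ell^* \le \lambda\, d_S(r_1, r_2) + \lambda \le \lambda\, d_S(q_1, q_2) + 2\lambda M + \lambda$. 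I would then bound the length $\ell$ of the corresponding arc of $w'$ from $q_1$ to $q_2$ by $C_1 \ell^* + C_1$ for a suitable $C_1 = C_1(\lambda, \delta)$, using the length comparison on the full pieces in the middle and an end-piece estimate on $z_{i_1}, z_{i_2}$ (see below). This yields $\ell \le \lambda' d_S(q_1, q_2) + \lambda'$ for an appropriate $\lambda' = \lambda'(\lambda, S, G)$; since $d_S(q_1, q_2) \le \ell$ is automatic and $q_1, q_2$ were arbitrary, this establishes the cyclic $\lambda'$-quasigeodesicity of $w'$.

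The main technical obstacle is the treatment of the two end pieces $z_{i_1}, z_{i_2}$, in which $q_1, q_2$ lie strictly inside rather than at subdivision points. For these I would use the $\lambda$-quasi-geodesic property of $z_{i_1}$ itself: the partial length from $q_1$ to $p_{i_1}$ along $z_{i_1}$ is at most $\lambda\, d_S(q_1, p_{i_1}) + \lambda$, and by the triangle inequality together with the Morse bound one has $d_S(q_1, p_{i_1}) \le M + (\text{length of } v_{i_1} \text{ from } r_1 \text{ to } p_{i_1})$, so this partial $z$-length is controlled by the corresponding partial $v$-length. An analogous estimate holds at the other end. A secondary bookkeeping step bounds the number of middle pieces crossed above by the length of the corresponding middle arc of $w$ plus one, ensuring that the additive constant per piece can be absorbed into $\lambda'$.
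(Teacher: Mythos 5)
The paper does not write out a full proof: it defers to Lemma~3.4 of \cite{Ka97} and only flags the one essential ingredient, namely that word-hyperbolicity forces the paths labelled by $z_i$ and $v_i$ (same endpoints, both $\lambda$-quasigeodesic) to be $\epsilon$-Hausdorff close. Your proposal uses exactly this ingredient, in the form of the Morse stability lemma, and fills in the surrounding bookkeeping (reference points $r_1,r_2$, length comparison on middle pieces, end-piece estimates) in the way the paper's cited lemma handles non-cyclic words; this is the same approach, and your outline is sound.

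One small point worth being careful about if you write this up fully: when you pass from the arc of $w'$ between $q_1$ and $q_2$ (which, by the definition of cyclic quasigeodesic, has length at most $|w'|$) to the "corresponding" arc of $w$ between $r_1$ and $r_2$, that arc of $w$ could a priori be longer than $|w|$ (for instance if $|w'|$ is much larger than $|w|$), so the cyclic $\lambda$-quasigeodesic bound for $w$ does not apply to it verbatim. This is fixed by running the estimate at the level of pieces: the arc from $q_1$ to $q_2$ meets at most $m$ full pieces of $w'$, hence the reference arc on $w$ meets at most $m$ full pieces of $w$ and so has length at most $|w|$ plus two partial end pieces, after which one can split it into at most two genuinely cut-open subwords of $w$ and apply the bound to each. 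This is the kind of detail the paper's "straightforward variation" language is eliding.
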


\begin{proof}
The proof is a straightforward variation on the proof of a similar
Lemma~3.4 in \cite{Ka97} for non-cyclic word and we leave the
details to the reader. Note, however, that the proof uses the fact
that $G$ is word-hyperbolic and hence the paths labelled by $z_i$
and $v_i$ in the Cayley graph of $G$ are $\epsilon$-Hausdorff close,
where $\epsilon=\epsilon(\lambda,G,S)>0$. It is not hard to see that
the statement of this lemma in general fails for non-hyperbolic
groups, e.g. for $\mathbb Z^2$.
\end{proof}

\begin{notation}[Graph of groups notations]
Let $\mathcal Y$ be a graph of groups~\cite{Bass,Serre}. For $x\in
VY$ we will denote the vertex group of $x$ by $G_x$ and for $e\in
EY$ we will denote the edge group of $e$ by $G_e$. For $e\in EY$ we
denote the initial vertex of $e$ by $o(e)$ and we denote the
terminal vertex of $e$ by $t(e)$. For $e\in EY$ we denote the
corresponding boundary monomorphisms by $\alpha_e: G_e\to G_{o(e)}$
and $\omega_e: G_e\to G_{o(e)}$. Recall that according to the
standard graph of groups conventions, for every $e\in EY$ we have
$G_e=G_{e^{-1}}$, $o(e)=t(e^{-1})$ and $\alpha_e=\omega_{e^{-1}}$.
\end{notation}

\begin{defn}\label{defn:y-paths}
Let $\mathcal Y$ is a finite connected reduced graph of groups with
underlying graph $Y$, where all the vertex groups $G_x, x\in VY$,
are finitely generated. Let $Z\subseteq Y$ be a maximal tree in $Y$.
Fix an orientation $EY=E^+Y\sqcup E^-Y$ on $Y$, so that for every
$e\in EY$ we have $e\in E^+Y$ iff $e^{-1}\in E^-Y$.

Let $G=\pi_1(\mathcal Y,Z)$. Then $G$ has the presentation

\begin{gather*} G=\left( \big(\bigstar_{x\in VY} G_x\big) \ast
F(E^+Y)\right)/\\ \langle\langle e^{-1}\alpha_e(c)e=\omega_e(c),
e\in E^+Y, c\in G_e; e=1, e\in E^+Z \rangle\rangle.
\end{gather*}

Let $S_x$ be a finite generating set of $G_x$ for $x\in VY$. Then

\[
S=S_{\mathcal Y}=E^+Y\bigcup \cup_{x\in VY} S_x
\]
is a generating set of $G$ that is said to be \emph{adapted to
$\mathcal Y$}.

Recall that a \emph{$\mathcal Y$-path} from $x\in VY$ to $x'\in VY$
is a sequence
\[
\alpha=g_0, e_1, g_1, e_2, \dots, e_n, g_n
\]
where $e_1,\dots, e_n$ is an edge-path in $Y$ with the vertex
sequence $x=x_0, x_1,\dots, x_n=x'$ and where $g_i\in G_{x_i}$. A
$\mathcal Y$-path $\alpha$ as above is \emph{$\mathcal Y$-reduced}
if it does not contain a subsequence of the form
\[
e, \omega_e(c), e^{-1},
\]
where $e\in EY$ and $c\in G_e$.

We say that a word $W$ in $S_{\mathcal Y}^{\pm 1}$ is \emph{a
path-word for $\mathcal Y$} if it has the form
\[
W=w_0e_1w_1\dots e_nw_n
\]
where $e_1,\dots, e_n$ is an edge-path in $Y$ with the vertex
sequence $x_0, x_1,\dots, x_n$ and where $w_i$ is a word in
$S_{x_i}^{\pm 1}$. Note that any path-word $W$ as above defines a
$\mathcal Y$-path $\alpha=g_0, e_1, g_1, e_2, \dots, e_n, g_n$,
where $g_i\in G_{x_i}$ is the group element represented by $w_i$.

We say that a path-word $W$ as above is a \emph{$\mathcal Y$-reduced
path word} if it defines a $\mathcal Y$-reduced $\mathcal Y$-path.

In a similar way, one defines the notions of a \emph{cyclic
$\mathcal Y$-path}, \emph{$\mathcal Y$-reduced cyclic $\mathcal
Y$-path}, a \emph{cyclic path-word for $\mathcal Y$} and a
\emph{$\mathcal Y$-reduced cyclic path-word for $\mathcal Y$}.
\end{defn}

\begin{conv}\label{conv:subst}
Let $A$ be another finite generating set for $G=\pi_1(\mathcal Y,
Z)$. For each element of $S_{\mathcal Y}$ choose its representation
as a word in $A$. Using these representations for substitution, any
(cyclic) word $W$ in $S_{\mathcal Y}^{\pm 1}$ defines a (cyclic)
word $\widetilde W$ in $A^{\pm 1}$.
\end{conv}

\begin{prop}\label{prop:cyclic}

Let $G$ be a word-hyperbolic group and let $G=\pi_1(\mathcal Y, Z)$
be a splitting of $G$ where $\mathcal Y$ is a finite graph of
groups, and where $Z\subseteq Y$ is a maximal subtree. Suppose that
each vertex group $G_x$ is quasiconvex in $G$. Let $S_x$ be a finite
generating set of $G_x$ for each $x\in VY$ and let
\[
S_{\mathcal Y}=E^+Y\bigcup \cup_{x\in VY} S_x
\]
be a generating set of $G$ adapted to $\mathcal Y$. Let $A$ be any
other finite generating set of $G$.

There exist $\lambda>0$ and $\epsilon>0$ with the following
properties.

\begin{enumerate}
\item For any $g\in F, g\ne 1$ there exists a $\mathcal Y$-reduced
  path-word $W$ representing $g$ such that $W$ is
  $\lambda$-quasigeodesic with respect to $d_{S_{\mathcal Y}}$.

Moreover, if $w\in (A\cup A^{-1})^\ast$ is a $d_A$-geodesic
representing $g$, then the paths from $1$ to $g$ labelled by $w$ and
$\tilde W$ in $X(F,A)$ are $\epsilon$-Hausdorff close.

\item For any conjugacy class $[g], g\in F, g\ne 1$, there exists a $\mathcal Y$-reduced cyclic
path-word representing $[g]$ such that $W$ is a cyclic
$\lambda$-quasigeodesic with respect to $d_{S_{\mathcal Y}}$.

Moreover, let $w$ be a cyclic word over $A^{\pm 1}$ which is a
cyclic $d_A$-geodesic representing $[g]$. Then there is a function
$f$ from the vertex set of $w$ to the vertex set of $W$ with the
following properties. For every vertex $x$ on $w$ cutting open $w$
at $x$ and $W$ at $f(x)$ produces a $d_A$-geodesic word $w_x$ and a
$\mathcal Y$-reduced path-word $W_x$ accordingly. Additionally,
there exist paths $p$ and $p'$ in $X(G,A)$ labelled by $w_x$ and
$\widetilde{W_x}$ such that there initial vertices are at most
$\epsilon$-apart, their terminal vertices are at most
$\epsilon$-apart and, for any other vertex $y$ on $w$ the vertices
on $p$ and $p'$ corresponding to $y$ and $f(y)$ are at most
$\epsilon$-apart in $X(G,A)$.
\end{enumerate}

\end{prop}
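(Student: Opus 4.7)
For part (1), fix $g \in G$, $g \ne 1$. By standard Bass-Serre theory, $g$ admits a $\mathcal{Y}$-reduced $\mathcal{Y}$-path representative $\alpha = g_0, e_1, g_1, \dots, e_n, g_n$. For each $i$, I would choose $w_i$ to be a $d_{S_{x_i}}$-geodesic word over $S_{x_i}^{\pm 1}$ representing $g_i \in G_{x_i}$; concatenating gives a $\mathcal{Y}$-reduced path-word $W = w_0 e_1 w_1 \cdots e_n w_n$ over $S_{\mathcal{Y}}^{\pm 1}$ representing $g$.

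The heart of the argument is to show that $W$ is $\lambda$-quasigeodesic with respect to $d_{S_{\mathcal{Y}}}$ for a constant $\lambda$ depending only on $G, \mathcal{Y}, A$ and the generating sets $S_x$. Since $G$ is word-hyperbolic and each $G_x$ is quasiconvex in $G$, each coset $h G_x$ is uniformly quasiconvex with respect to any finite generating set. The edge groups $G_e$ embed via $\alpha_e, \omega_e$ as subgroups of quasiconvex vertex groups, and in the hyperbolic setting these images are themselves uniformly quasiconvex. The $\mathcal{Y}$-reduced condition then says that when $W$ crosses an edge $e_i$ it leaves the current vertex coset and cannot return to a uniform neighborhood of it without violating reducedness. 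Applying a standard local-to-global combination argument in $\delta$-hyperbolic spaces (of the type used in Bestvina-Feighn / Mj-Reeves combination theorems) iterated across the vertex cosets yields the desired uniform quasigeodesity of $W$ in $X(G, S_{\mathcal{Y}})$.

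Once this is established, the transition to $d_A$ is routine: $S_{\mathcal{Y}}$ and $A$ are both finite generating sets of $G$, so $X(G, S_{\mathcal{Y}})$ and $X(G, A)$ are $G$-equivariantly quasi-isometric. Under the substitution of Convention~\ref{conv:subst} the path $\widetilde W$ in $X(G,A)$ is therefore $\lambda'$-quasigeodesic for some $\lambda'$ depending only on the data. Since $\widetilde W$ and any $d_A$-geodesic $w$ representing $g$ share the same endpoints $1$ and $g$, the Morse lemma in the $\delta_A$-hyperbolic graph $X(G,A)$ produces an $\epsilon > 0$ with the required Hausdorff-closeness property.

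For part (2), given a nontrivial conjugacy class $[g]$, I would first replace $g$ by a conjugate whose $\mathcal{Y}$-normal form is cyclically $\mathcal{Y}$-reduced; Bass-Serre theory guarantees this in a splitting with quasiconvex edge groups. Applying the construction of part (1) cyclically produces a cyclic path-word $W = w_0 e_1 w_1 \cdots e_n w_n$ over $S_{\mathcal{Y}}^{\pm 1}$. Cutting $W$ open at any vertex gives a $\mathcal{Y}$-reduced path-word, which by part (1) is $\lambda$-quasigeodesic in $d_{S_{\mathcal{Y}}}$. Invoking Lemma~\ref{lem:subst} with the subdivision of $W$ coming from cutting between consecutive $w_i$'s, and substituting each $w_i$ by the corresponding $d_A$-geodesic word, yields a cyclic $\lambda'$-quasigeodesic $\widetilde W$ in $X(G, A)$. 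To construct the map $f$ and verify the closeness claim, I would pass to the biinfinite periodic paths labelled by $w^\infty$ and $\widetilde W^\infty$ in $X(G, A)$: both are $\lambda''$-quasigeodesics having the same pair of endpoints in $\partial G$ (namely $g^{\pm \infty}$), hence by $\delta_A$-hyperbolicity they fellow-travel at a uniform Hausdorff distance $\epsilon$. The closest-point projection between these biinfinite paths descends to the required map $f$ from vertices of $w$ to vertices of $W$, and the asserted bounds on initial, terminal, and intermediate vertices follow from this uniform fellow-traveling.

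The main obstacle is the uniform quasigeodesity of $\mathcal{Y}$-reduced path-words with respect to $d_{S_{\mathcal{Y}}}$; this is where hyperbolicity of $G$ and quasiconvexity of the vertex (and hence edge) groups are used in an essential way, and everything else is essentially a transfer of quasigeodesics between quasi-isometric Cayley graphs combined with the Morse lemma.
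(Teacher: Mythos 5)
Your proposal is essentially correct, but for part (2) it takes a genuinely different route from the paper, and one small step of your reasoning should be adjusted. For part (1), both you and the paper rely on the combination-theorem-type fact that a $\mathcal Y$-reduced path-word built from geodesic vertex-group words is uniformly quasigeodesic in $X(G,S_{\mathcal Y})$; the paper simply cites Proposition~5.1 of \cite{Ka01} for this, whereas you sketch the local-to-global argument (quasiconvexity of vertex and edge cosets plus $\delta$-hyperbolicity). For part (2), the paper goes in the opposite direction from you: it takes a $d_{S_{\mathcal Y}}$-\emph{shortest} conjugate $g'$ of $g$, so that the cyclic word $W_0$ spelling a geodesic for $g'$ is already a cyclic geodesic; it then inserts trivial subwords in the alphabet $EZ$ to turn $W_0$ into a cyclic path-word $W_1$ (still quasigeodesic by Lemma~\ref{lem:subst}); and finally it locates a maximal separated collection of ``pinches'' $e_iu_ie_i^{-1}$ and replaces each by a geodesic word in the corresponding vertex group, using quasiconvexity and Lemma~\ref{lem:subst} a second time to preserve quasigeodesity, while maximality guarantees $\mathcal Y$-reducedness. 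This avoids ever having to re-invoke the quasigeodesity of a general $\mathcal Y$-reduced concatenation; it always starts from a genuine geodesic and does only local surgery. Your route instead starts from a cyclically $\mathcal Y$-reduced normal form and derives quasigeodesity, which puts more weight on the local-to-global result. That is defensible, but beware one imprecision: you justify the quasigeodesity of each cut-open word by invoking ``part (1),'' which as stated is merely an existence result. What you actually need (and what your proof of part (1) does, in fact, establish) is the stronger universal statement that \emph{any} $\mathcal Y$-reduced path-word whose vertex-group blocks are geodesic words is uniformly quasigeodesic; once this is made explicit, your deduction for the cut-open words goes through, since cutting a cyclically $\mathcal Y$-reduced cyclic path-word in the interior of some $w_i$ produces such a word (subwords of geodesics being geodesics). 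Your handling of the fellow-traveling map $f$ via biinfinite periodic lifts matches the ``straightforward $\delta$-hyperbolic exercise'' the paper leaves to the reader.
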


\begin{proof}
Part (1) of the lemma is essentially the same as Proposition~5.1 in
\cite{Ka01}. We will briefly indicate the proof of part (2) that is
similar to to proof of Proposition~5.1 in \cite{Ka01}.

To see that part (2) holds, we first replace $g$ by a
$d_{S_{\mathcal Y}}$-shortest element $g'\in [g]$. Let $\hat W$ be a
$d_{S_{\mathcal Y}}$-geodesic representative of $g'$. Then, by the
choice of $g'$, every cyclic permutation of $\hat W$ is a
$d_{S_{\mathcal Y}}$-geodesic. Recall that $Z$ is a maximal tree in
$Y$.

Let $W_0$ be the cyclic word defined by $\hat W$. Thus $W_0$ is a
cyclic $d_{S_{\mathcal Y}}$-geodesic. By inserting in $W_0$ several
subwords in the alphabet $EZ$ (which therefore represent the trivial
element of $G$) of length $\le \#EZ$ each, we can obtain a new
cyclic word $W_1$ that is a cyclic path-word for $\mathcal Y$. By
Lemma~\ref{lem:subst} $W_1$ is a cyclic $\lambda_1$-quasigeodesic
for some constant $\lambda_1>0$. However, $W_1$ need not be
$\mathcal Y$-reduced.

We say that a subsegment $v$ of $W_1$ is a \emph{pinch} if $v$ has
the form $v=eue^{-1}$ and $v$ represents an element of the edge
group $\omega_e(G_e)$. Note that we DO NOT require $u$ to be a word
in the generating set of the vertex group $G_{t(e)}$. We say that a
collection of pinches $v_1,\dots, v_m$ in $W_1$ is \emph{separated}
if the segments $v_1,\dots, v_m$ do not overlap in $W_1$.

Let each of $v_1,\dots, v_m$ and $v_1',\dots, v'_l$ be a separated
collection of pinches in $W_1$. We say that $v_1,\dots, v_m$ is
\emph{dominated by} $v_1',\dots, v_l'$ if for each $v_i$ there is
some $v_j'$ such that the segment $v_i$ is contained in the segment
$v_j$ in $W_1$.

Now take $v_1=e_1u_1e_1^{-1},\dots, v_{m}=e_mu_me_m^{-1}$ be a
maximal (with respect to dominance) separated collection of pinches
in $W_1$. Replace each $v_i$ in $W_1$ by its geodesic representative
in the generators of the vertex group $G_{o(e_i)}$. Denote the
resulting cyclic word by $W$.

Since the vertex groups are quasiconvex in $G$,
Lemma~\ref{lem:subst} implies that the cyclic word $w$ is a cyclic
$\lambda_2$-quasigeodesic for some constant $\lambda_2>0$. The
maximality of the choice of $v_1,\dots, v_m$ implies that $W$ is a
$\mathcal Y$-reduced cyclic path-word.

Establishing the remaining properties of $W$ asserted in part (2) of
the proposition is a straightforward $\delta$-hyperbolic exercise
and we leave the details to the reader.
\end{proof}

\section{The discrete action case}

\begin{conv}\label{conv:F}
For the remainder of this section, let $F$ be a finitely generated
nonabelian free group. We fix a very small splitting
$F=\pi_1(\mathcal Y,Z)$, where $\mathcal Y$ is a finite connected
reduced graph of groups with a maximal tree $Z$, where all edge
groups are cyclic (and hence all vertex groups are finitely
generated free groups). We fix an orientation on $Y$ and choose a
finite generating set
\[
S_{\mathcal Y}=E^+Y\bigcup \cup_{x\in VY} S_x
\]
for $F$ that is adapted to $\mathcal Y$.

Let $V'Y$ denote the set of all $x\in VY$ such that the  vertex
group $G_x$ is non-cyclic. Let $T_\mathcal Y$ be the Bass-Serre tree
corresponding to the splitting $F=\pi_1(\mathcal Y,Z)$.

We also fix a free basis $A$ for $F$.
\end{conv}

\begin{lem}\label{lem:L2}
Let $L=L^2(T_{\mathcal Y})$ and let $v\in F(A)$ be freely reduced.
Then $v\in L_A$ if and only if for some vertex $x\in V'Y$ the word
$v$ can be read as the label of a path in the core of the Stallings
subgroup graph for $G_x$ with respect to $F(A)$. That is,
$L=\cup_{x\in V'Y} i_\Lambda(\partial^2 G_x)$
\end{lem}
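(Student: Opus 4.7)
The plan is to characterize $L_A$ in terms of cyclically reduced elliptic elements of $F$ and then to translate this to paths in the cores of Stallings subgroup graphs of vertex groups. Since $T_{\mathcal Y}$ is a discrete simplicial tree, there is a positive lower bound $\delta>0$ on $\|g\|_{T_{\mathcal Y}}$ for every hyperbolic $g\in F$, while $\|g\|_{T_{\mathcal Y}}=0$ whenever $g$ is elliptic, i.e., conjugate into some vertex group. Thus, in Definition~\ref{defn:dual} of $L^2(T_{\mathcal Y})$, for $\epsilon<\delta$ the condition $\|w\|_{T_{\mathcal Y}}\le\epsilon$ is equivalent to requiring $w$ to be elliptic. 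Consequently $v\in L_A$ exactly when $v$ is a subword of some cyclically reduced elliptic word $w$ in $F(A)$.

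For the forward implication, take a cyclically reduced elliptic $w$ containing $v$ as a subword. Then $w$ lies in some $F$-conjugate of a vertex group $G_x$. Standard Stallings subgroup graph theory shows that cyclically reduced conjugates of elements of $G_x$ are precisely the labels of reduced cyclic loops in the core $\Delta_{G_x}$; hence $v$ labels a path in $\Delta_{G_x}$. If $x\in V'Y$, we are done. Otherwise $G_x$ is infinite cyclic in $F$, and I would invoke an absorption argument: very smallness forces any edge group $G_e$ incident to $x$ to be maximal cyclic in $F$, which together with $G_e\subseteq G_x=\mathbb Z$ yields $G_e=G_x$; reducedness of $\mathcal Y$ then forces the other endpoint $y$ of $e$ (which cannot be a loop at $x$, since that would produce a $\mathbb Z^2$ or Klein-bottle subgroup of the free group $F$) to have a vertex group strictly larger than $G_e=G_x$, hence non-cyclic. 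The induced inclusion $G_x\hookrightarrow G_y$ in $F$ yields an $A$-label-preserving embedding of $\Delta_{G_x}$ into $\Delta_{G_y}$, so $v$ labels a path in $\Delta_{G_y}$ with $y\in V'Y$.

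For the converse, suppose $v$ labels a path in $\Delta_{G_x}$ for some $x\in V'Y$. Because $G_x$ is non-cyclic free, the finite graph $\Delta_{G_x}$ has every vertex of degree at least $2$ and non-cyclic fundamental group, so $v$ extends to arbitrarily long reduced cyclic loops in $\Delta_{G_x}$, each labelling a cyclically reduced element of $F(A)$ whose conjugacy class meets $G_x$ and which is therefore elliptic. Equivalently, applying Lemma~\ref{lem:H} to the algebraic lamination $\partial^2 G_x\in\Lambda^2(G_x)$ gives $v\in(i_\Lambda(\partial^2 G_x))_A$; by extending the path labelled by $v$ to a bi-infinite reduced path in $\Delta_{G_x}$ one obtains a pair $(\xi_1,\xi_2)\in\partial^2 G_x\subseteq\partial^2 F$ whose geodesic in $X(F,A)$ contains $v$ as a subsegment, and the cyclic-loop construction verifies the defining condition of $L^2(T_{\mathcal Y})$ at this pair. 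Thus $(\xi_1,\xi_2)\in L^2(T_{\mathcal Y})$ and hence $v\in L_A$, which also establishes the equivalent equality $L=\cup_{x\in V'Y} i_\Lambda(\partial^2 G_x)$.

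The main technical obstacle is the absorption step: showing that the contribution of any cyclic vertex group to the lamination is already contained in that of an adjacent non-cyclic vertex group. This combines the very small hypothesis (maximal cyclic arc stabilizers), the reducedness of $\mathcal Y$, and the freeness of $F$ (to exclude loop edges at cyclic vertex groups). A secondary technical point is the standard identification between cyclically reduced conjugates of elements of a finitely generated subgroup $H\le F(A)$ and reduced cyclic loops in the core $\Delta_H$ of the Stallings subgroup graph $\Gamma_H$.
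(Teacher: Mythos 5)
Your proof follows essentially the same route as the paper's: use discreteness of $T_{\mathcal Y}$ to reduce the condition $v\in L_A$ to ``$v$ is a subword of some cyclically reduced elliptic word,'' and then translate ellipticity into readability in the core of a Stallings subgroup graph of a vertex group. Your converse direction (extend the path labelled $v$ in $\Delta_{G_x}$ to a bi-infinite reduced path in $\Delta_{G_x}$, take the resulting pair in $\partial^2 G_x$, and verify the defining condition of $L^2(T_{\mathcal Y})$ by closing up subsegments into reduced cyclic loops) is correct and is spelled out in more detail than the paper, which dispatches the lemma in a few lines.

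The difference lies in the passage from arbitrary vertex groups to the non-cyclic ones, i.e.\ to $V'Y$. The paper simply asserts, without argument, that ``since $\mathcal Y$ defines a very small splitting of $F$, every cyclic vertex group of $\mathcal Y$ is contained in some non-cyclic vertex group.'' You try to justify this with an absorption argument, which is a genuine addition. However, your absorption step is incomplete: it begins with ``very smallness forces any edge group $G_e$ incident to $x$ to be maximal cyclic in $F$,'' but the very-small hypothesis constrains only \emph{nontrivial} arc stabilizers, so if $G_e=1$ this gives nothing, and the chain $G_e = G_x \subsetneq G_y$ does not get started. Concretely, for $F=\langle a\rangle * \langle b\rangle$ realized as a reduced graph of groups with two cyclic vertex groups and one trivial edge group, the action on $T_{\mathcal Y}$ is very small, the leaf $(a^{-\infty},a^{\infty})$ lies in $L^2(T_{\mathcal Y})$, yet $V'Y=\emptyset$; neither the paper's assertion nor your absorption argument addresses this configuration. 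So there is a real gap at the trivial-edge-group case, though it is a gap you inherit from the paper's own claim rather than one you introduce. If you want a clean statement, the conclusion should range over vertices with \emph{nontrivial} vertex groups, and the absorption argument then needs to be restricted to the nontrivial edge group case (where it does work, for exactly the reasons you give: maximal cyclic stabilizers, no loops at cyclic vertices since $F$ is free, and reducedness).
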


\begin{proof}

Note first that, since $\mathcal Y$ defines a very small splitting
of $F$, every cyclic vertex group of $\mathcal Y$ is contained in
some non-cyclic vertex group. Hence every word in $F(A)$ readable in
the core of the Stallings subgroup graph of a cyclic vertex group is
also readable in the core of the Stallings subgroup graph of a
non-cyclic vertex group.

Since the action of $F$ on $T_{\mathcal Y}$ is discrete, there is
some $c>0$ such that for every $f\in F$ either $||f||_{T_\mathcal
Y}\ge c$ or else $f$ is conjugate to an element of some $G_x,x\in
VY$ and $||f||_{T_\mathcal Y}=0$. Therefore, in view of the above
remark, by definition of $L=L^2(T_{\mathcal Y})$, for $v\in F(A)$ we
have $v\in L_A$ if and only if $v$ is a subword of some cyclically
reduced word in $F(A)$ representing an element conjugate to an
element of some $G_x,x\in V'Y$. This implies the statement of the
lemma.
\end{proof}

The following lemma is an easy corollary of the definitions:
\begin{lem}\label{lem:core}
Let $H\le F(A)$ be a finitely generated subgroup. Let $M$ be the
number of edges in the Stallings subgroup graph $\Gamma_H$ of $H$
with respect to $F(A)$. Let $z_1,w,z_2$ be freely reduced words in
$F(A)$ such that $|z_1|,|z_2|\le c$ and such that $z_1\cdot w\cdot
z_2\in H$ (we do not assume this product to be freely reduced). Let
$w$ be written as a reduced product $w=w_1w'w_2$ where $|w_1|_A,
|w_2|_A\ge c+M$. Then $w'$ can be read as the label of a path in the
core of the graph $\Gamma_H$.
\end{lem}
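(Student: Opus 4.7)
The plan is to combine bounded cancellation with a structural analysis of reduced loops in $\Gamma_H$. First I would consider the freely reduced form $u$ of the (possibly non-reduced) product $z_1 w z_2$. Because $|z_1|_A, |z_2|_A \le c$, at most $c$ letters cancel from each end of $w$ when forming $u$, so $u$ has the shape $u = z_1' \hat w_1 w' \hat w_2 z_2'$ with $|z_i'|_A \le c$ and $|\hat w_i|_A \ge |w_i|_A - c \ge M$ for $i = 1,2$. Since $u \in H$, the word $u$ reads as a closed loop in $\Gamma_H$ starting from the basepoint $b$ (the image of $1 \in F$).

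The next step is to use the shape of $\Gamma_H$ as described in the convention just before the lemma: $\Gamma_H$ decomposes as its core $\Delta_H$ together with a single ``stem'' path of some length $L \le M$, joining $b$ to a vertex $v$ of $\Delta_H$ and arising from the segment $[1,x]$. If $L = 0$ then $b$ already lies in $\Delta_H$ and the whole loop is automatically in $\Delta_H$, so there is nothing to prove. If $L \ge 1$ then $b$ is a leaf of $\Gamma_H$, and every vertex of the stem strictly between $b$ and $v$ has valence $2$ in $\Gamma_H$. I would then observe that any reduced loop based at $b$ must traverse the entire stem out to $v$, perform a reduced circuit inside $\Delta_H$ based at $v$, and finally retrace the stem back from $v$ to $b$. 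The reason is that the stem is a simple path whose only contact with the core is at $v$, so a reduced path that leaves the core into the stem cannot return to the core without backtracking.

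Finally I would carry out the positional bookkeeping. In the word $u$, the subword $w'$ starts at position at least $|\hat w_1|_A \ge M \ge L$ from the front of $u$ and ends at position at least $|\hat w_2|_A \ge M \ge L$ from the back. Hence the edges of the loop labelled by the letters of $w'$ all lie inside the middle core-circuit portion of the stem-core-stem decomposition, and $w'$ labels a path in the core $\Delta_H$, as required. The crux of the argument is the rigid stem-core-stem structure of reduced loops at $b$; once that is in hand, the rest is just bounded cancellation and counting positions.
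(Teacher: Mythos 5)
The paper gives no proof of this lemma (it is labelled ``an easy corollary of the definitions''), so there is nothing to compare against; your argument correctly fills in the details in what is presumably the intended way. The key observations---that $\Gamma_H$ is the core $\Delta_H$ together with a single stem of length $L\le M$ attached at the basepoint, that intermediate stem vertices have valence $2$ while the basepoint has valence $1$ so every reduced loop at the basepoint factors as (stem out)$\cdot$(reduced circuit in $\Delta_H$)$\cdot$(stem back), and that the hypotheses $|z_i|_A\le c$ and $|w_i|_A\ge c+M$ together with free cancellation force all letters of $w'$ into the middle circuit---are exactly what is needed, and your bookkeeping checks out.
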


\begin{prop}\label{prop:discr1}
Let $F$ be as in Convention~\ref{conv:F}. Let $T=T_{\mathcal Y}$ be
the Bass-Serre tree corresponding to $\mathcal Y$. Let $\mu\in
Curr(F)$ be such that $\langle T, \mu\rangle=0$.

Then $supp(\mu)\subseteq L^2(T)$.
\end{prop}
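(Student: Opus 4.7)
The plan is to approximate $\mu$ by rational currents $\eta_{w_i}/\|w_i\|_A$ over the fixed free basis $A$, use Proposition~\ref{prop:cyclic}(2) to replace each cyclic word $w_i$ by a $\mathcal{Y}$-reduced cyclic path-word $W_i$ over $S_\mathcal{Y}$, and then exploit the fact that the number of edge-letters of $W_i$ grows sublinearly in $\|w_i\|_A$ in order to show that any freely reduced $v \in F(A)$ with $\alpha := \langle v,\mu\rangle_A > 0$ is readable in the core of the Stallings subgroup graph $\Gamma_{G_x}$ of some vertex group of $\mathcal{Y}$. By Lemma~\ref{lem:L2} this is enough, since any cyclic vertex group of a very small splitting is contained in some non-cyclic one.

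To begin, I will normalize $\|\mu\|_A = 1$ and, after replacing the $w_i$ by powers (which preserves both the normalized current and the ratio $\|w_i\|_T / \|w_i\|_A$), arrange that $\|w_i\|_A \to \infty$; the hypothesis $\langle T,\mu\rangle = 0$ then forces $\|w_i\|_T / \|w_i\|_A \to 0$. I will then apply Proposition~\ref{prop:cyclic}(2) to $[w_i]$ with the generating set $S_\mathcal{Y}$ and the fixed basis $A$, obtaining a $\mathcal{Y}$-reduced cyclic path-word $W_i = u_{i,0} e_{i,1} u_{i,1} \cdots e_{i,n_i} u_{i,n_i}$ that is cyclic $\lambda$-quasigeodesic over $S_\mathcal{Y}$, together with a vertex correspondence $f_i$ between the cyclic $A$-geodesic $w_i$ and $W_i$ such that corresponding vertices in $X(F,A)$ are at most $\epsilon$ apart. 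Since $W_i$ is $\mathcal{Y}$-reduced, Bass--Serre theory identifies $n_i$ with the translation length $\|w_i\|_T$, so $n_i / \|w_i\|_A \to 0$.

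I will fix a constant $N_0$ exceeding $\epsilon$, the double bounded cancellation constants between $A$ and the fixed free bases of the vertex groups of $\mathcal{Y}$, and the maximum number of edges in any Stallings graph $\Gamma_{G_x}$, $x \in VY$. Call an occurrence of $v^{\pm 1}$ in $w_i$ \emph{good} if its image segment under $f_i$ lies inside a single vertex-group segment $u_{i,j}$, at $A$-distance at least $N_0$ from each of the adjacent edge-letters $e_{i,j}$ and $e_{i,j+1}$. Because the substitution of $S_\mathcal{Y}$-letters by $A$-words has uniformly bounded length, each of the $n_i$ edge-letters influences only a bounded-length window in $w_i$, so the number of bad positions in $w_i$ is $O(n_i) = o(\|w_i\|_A)$; on the other hand, the number of occurrences of $v^{\pm 1}$ in $w_i$ equals $\langle v, \eta_{w_i} \rangle_A = (\alpha + o(1)) \|w_i\|_A$ by $F$- and flip-invariance of $\mu$, so good occurrences exist in abundance for large $i$. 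For each good occurrence, the $w_i$-subsegment corresponding under $f_i$ to the full $u_{i,j}$ represents, up to prefixes and suffixes of $A$-length at most $\epsilon$, the element of $G_{x_{i,j}}$ spelled by $u_{i,j}$, while $v^{\pm 1}$ sits in its interior with margin greater than $N_0$. Lemma~\ref{lem:core}, applied with $H = G_{x_{i,j}}$, then forces $v^{\pm 1}$ to be the label of a path in the core of $\Gamma_{G_{x_{i,j}}}$. Pigeonholing over the finite set $VY$ yields some vertex $x_*$ that absorbs good occurrences for infinitely many $i$; if $G_{x_*}$ is cyclic, the very small hypothesis on $\mathcal{Y}$ supplies an inclusion into a non-cyclic $G_{x'}$ with $x' \in V'Y$, upgrading readability to $\Gamma_{G_{x'}}$. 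Lemma~\ref{lem:L2} then gives $v \in (L^2(T))_A$, whence $supp(\mu) \subseteq L^2(T)$.

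The main obstacle will be the cancellation bookkeeping required to translate positions back and forth between $w_i$ (a cyclically reduced word over $A$) and $W_i$ (a $\mathcal{Y}$-reduced cyclic path-word over $S_\mathcal{Y}$ whose $A$-expansion is typically not freely reduced). One must combine the $\epsilon$-Hausdorff closeness clause of Proposition~\ref{prop:cyclic}(2) with the double bounded cancellation constants of Section~4 in order to lift the geometric statement ``$f_i$ places $v^{\pm 1}$ deep inside a vertex-group segment $u_{i,j}$'' to the algebraic statement required by Lemma~\ref{lem:core} that an appropriate $w_i$-subword, after pre- and post-multiplication by bounded-length words, represents an element of $G_{x_{i,j}}$, and to verify that the influence zones of the $n_i$ edge-letters really do occupy only $O(n_i) = o(\|w_i\|_A)$ positions in $w_i$.
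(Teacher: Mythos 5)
Your proposal is correct and follows essentially the same approach as the paper's proof: approximate $\mu$ by cyclic words $w_i$, convert each to a $\mathcal Y$-reduced cyclic path-word $W_i$ via Proposition~\ref{prop:cyclic}(2), observe that the number of edge-letters in $W_i$ equals $\|w_i\|_T = o(\|w_i\|_A)$ while occurrences of $v$ grow linearly, and so find an occurrence buried deep inside a vertex-group segment, to which Lemma~\ref{lem:core} and Lemma~\ref{lem:L2} apply. The paper's only cosmetic difference is that it pads $v$ to $v' = u_1 v u_2$ with $|u_1|_A,|u_2|_A \ge M + \epsilon$ up front and then runs the count as a contradiction argument (if every occurrence met an edge-letter, $\|w_i\|_T$ would be at least $(\alpha/K)\|w_i\|_A$), rather than your direct good/bad occurrence tally with an $N_0$ margin.
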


\begin{proof}
Choose a free basis $A$ of $F$. By re-scaling we may assume that
$||\mu||_A=1$. Let $w_i$ be a sequence of reduced cyclic words $w_i$
in $F(A)$ such that $\lim_{i\to\infty}
\frac{\eta_{w_i}}{||w_i||}=\mu$. Since $\langle T, \mu\rangle=0$, we
have  $\lim_{i\to\infty} \frac{||w_i||_T}{||w_i||_A}=0$. As before, by taking powers if
necessary, we may assume that $\lim_{i\to\infty}||w_i||_A=\infty$.
We need to show that $supp(\mu)\subseteq L^2(T)$.

Let $\lambda,\epsilon>0$ be as provided by
Proposition~\ref{prop:cyclic}. Let $M$ be the maximum of the numbers
of edges among the Stallings subgroups graphs with respect to $F(A)$
for the subgroups $G_x, x\in VY$.

Let $v\in F(A)$ be a freely reduced word such that $v\in
supp_A(\mu)$. Thus $\langle v, \mu\rangle_A >0$. Then there exists a
freely reduced word $v'=u_1vu_2$ such that $\alpha:=\langle v',
\mu\rangle_A >0$ and such that $|u_1|_A, |u_2|_A\ge M+\epsilon$. We
may assume that for every $i\ge 1$ we have
\[
\alpha/2\le \frac{\langle (v')^{\pm 1}, w_i\rangle}{||w_i||_A}\le
2\alpha.
\]

Let $W_i$ be a cyclically $\lambda$-quasigeodesic $\mathcal
Y$-reduced cyclic path-word representing the same conjugacy class in
$F$ as $w_i$.

Fix $N>>\epsilon$.

By the same argument as in the proof of
Proposition~\ref{prop:dense1} we may find $m$ distinct occurrences
$v_1,\dots, v_m$ of $(v')^{\pm 1}$ in $w_i$ such that
$N$-neighborhoods of $v_1,\dots, v_m$ do not overlap in $w_i$ and
such that

\[
\frac{\alpha}{K}\le \frac{m}{||w_i||_A}\le 2\alpha,
\]
where $K=8N+4|v'|$.

For each $j=1,\dots, m$ let $x_j,x_j'$ be the initial and terminal
vertices of $v_j$ in $w_i$. Let $V_j$ be the segment of $W_i$ from
$f(x_j)$ to $f(x_j')$. The fact that the occurrences $v_1,\dots,
v_m$ of $(v')^{\pm 1}$ in $w$ have non-overlapping $N$-neighborhoods
implies that the segments $V_1,\dots, V_m$ in $W_i$ have no
overlaps.

Suppose first that for every $w_i$ each $V_j$ contains an occurrence
of some $e\in EY$. Then $||w_i||_T=||W_i||_T\ge m$. Hence for every
$i\ge 1$

\[
\frac{||w_i||_T}{||w_i||_A}\ge \frac{m}{||w_i||_A}\ge
\frac{\alpha}{K}>0.
\]
This contradicts the assumption that $\lim_{i\to\infty}
\frac{||w_i||_T}{||w_i||_A}=0$. Thus there is some $w_i$ such that
some $V_j$ contains no occurrences of $EY$. Since $W_i$ is a cyclic
path-word for $\mathcal Y$, this implies that there is some vertex
$x$ of $Y$ such that $V_j$ is a word in $S_x^{\pm 1}$.

Proposition~\ref{prop:cyclic} implies that in $X(F,A)$ there are
paths $\alpha_1, \alpha_2$ labelled by $v'$ and a word in $S_x^{\pm
1}$ accordingly, such that the initial vertices of
$\alpha_1,\alpha_2$ are at most $\epsilon$-apart and the terminal
vertices of $\alpha_1,\alpha_2$ are at most $\epsilon$-apart. Recall
that $v'=u_2vu_2$ where $|u_1|, |u_2|\ge M+\epsilon$. Therefore by
Lemma~\ref{lem:core} $v$ can be read as a path in the Stallings core
graph of $G_x=\langle S_x\rangle\le F(A)$. Hence by
Lemma~\ref{lem:L2} $v\in L^2(T)_A$, as required.
\end{proof}

\section{Restricting geodesic currents to subgroups}

\begin{defn}[Restriction of a current to a subgroup]
Let $H\le F$ be a finitely generated nonabelian subgroup. Since $H$
is quasiconvex in $F$, we have a canonical $H$-equivariant
topological embedding $\partial H\subset \partial F$ which induces a
canonical $H$-equivariant  topological embedding $\partial^2
H\subset
\partial^2 F$. Let $\mu\in Curr(F)$. We define the \emph{restricted
current} $\mu|_H$ as follows. For any Borel subset $S\subseteq
\partial^2H$ put $\mu|_H(S):=\mu(S)$. It is easy to see that $\mu|_H$
is an $H$-invariant measure on $\partial^2 H$, that is $\mu|_H\in
Curr(H)$.

Denote $r_H: Curr(F)\to Curr(H)$, $r_H(\mu)=\mu|_H$ for $\mu\in
Curr(F)$.
\end{defn}

\begin{rem}
In the above definition, the restricted current $\mu|_H$ depends
only on $\mu$ and the conjugacy class of $H$ in $F$ in the
following sense. Let $f\in F$ and put $H_1=fHf^{-1}$. Consider an
isomorphism $\alpha:H\to H_1$ defined as $\alpha(h)=fhf^{-1}$ for
$h\in H$. Then $\alpha$ induces an $\alpha$-equivariant
homeomorphism $\widehat \alpha:
\partial^2 H\to\partial^2 H_1$. Hence $\widehat \alpha$ induces a
canonical isomorphism $\breve\alpha: Curr(H)\to Curr(H_1)$ defined
as $(\breve\alpha\nu)(S_1)=\nu ({\widehat\alpha}^{-1}(S_1))$ for any
$\nu\in Curr(H)$ and $S_1\subseteq
\partial^2H_1$.

It is not hard to check that for any  $\mu\in Curr(F)$ we have
$\breve\alpha(\mu|_H)=\mu|_{H_1}$.
\end{rem}

The following is an immediate corollary of the definitions.

\begin{prop}
let $H\le F$ be a finitely generated nonabelian subgroup. Then $r_H:
Curr(F)\to Curr(H)$ is a linear continuous map.
\end{prop}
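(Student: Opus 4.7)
The proposition has two parts, and both follow rather directly from the definition $r_H(\mu)(S) = \mu(S)$ for Borel $S\subseteq \partial^2 H$, viewed as a subset of $\partial^2 F$ via the canonical embedding. For linearity: given $\mu_1,\mu_2 \in Curr(F)$, scalars $c_1,c_2\ge 0$ and any Borel $S\subseteq \partial^2 H$,
\[
r_H(c_1\mu_1 + c_2\mu_2)(S) = (c_1\mu_1+c_2\mu_2)(S) = c_1\mu_1(S)+c_2\mu_2(S) = c_1 r_H(\mu_1)(S) + c_2 r_H(\mu_2)(S).
\]
Since two Radon measures on $\partial^2 H$ agreeing on every Borel set coincide, $r_H$ is $\mathbb{R}_{\ge 0}$-linear.

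For continuity, suppose $\mu_n\to\mu$ in $Curr(F)$. Using the chart $\alpha_{H,A}\colon H\to \pi_1(\Delta_H,y)$ the weak topology on $Curr(H)$ is characterized by convergence on cylinder sets $Cyl_{\widetilde{\Delta}_H}(\tilde\gamma)$ for reduced edge-paths $\tilde\gamma$ in $\widetilde{\Delta}_H$. Since $\widetilde{\Delta}_H$ is a convex subtree of $X(F,A)$, any such $\tilde\gamma$ is also a reduced edge-path in $X(F,A)$, and the plan is to first identify
\[
Cyl_{\widetilde{\Delta}_H}(\tilde\gamma) = Cyl_{X(F,A)}(\tilde\gamma) \cap \partial^2 H
\]
inside $\partial^2 F$, using convexity to match the geodesic in $\widetilde{\Delta}_H$ with the one in $X(F,A)$ for each pair in $\partial^2 H$.

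Next I would express this set as a countable decreasing intersection $\bigcap_k C_k$, where $C_k$ is the finite disjoint union of $F$-cylinders $Cyl_{X(F,A)}(\tilde\gamma')$ ranging over all extensions $\tilde\gamma'$ of $\tilde\gamma$ by exactly $k$ edges on each side that stay inside $\widetilde{\Delta}_H$. Each $C_k$ is clopen in $\partial^2 F$, hence $\mu_n(C_k)\to\mu(C_k)$ by hypothesis, and by monotone convergence $\mu(C_k)\downarrow \mu(Cyl_{\widetilde{\Delta}_H}(\tilde\gamma))$ while the analogous decreasing limit holds for every $\mu_n$. An $\varepsilon$–$N$ argument then gives the upper bound $\limsup_n \mu_n(Cyl_{\widetilde{\Delta}_H}(\tilde\gamma))\le \mu(Cyl_{\widetilde{\Delta}_H}(\tilde\gamma))$.

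For the matching lower bound I would apply the same decomposition to the total cylinder $Cyl^F_{\tilde\gamma}=C_k\sqcup (Cyl^F_{\tilde\gamma}\setminus C_k)$, where the second piece is an increasing union of $F$-cylinders (extensions leaving $\widetilde{\Delta}_H$ at some step). Convergence of $\mu_n(Cyl^F_{\tilde\gamma})$ together with the increasing-union convergence on the complementary piece yields the matching $\liminf$. The main obstacle I anticipate is precisely this double-limit interchange between the weak convergence index $n$ and the extension-length index $k$; the key is that both sides of the clopen partition $Cyl^F_{\tilde\gamma}=C_k\sqcup(Cyl^F_{\tilde\gamma}\setminus C_k)$ are controlled by monotone sequences of $F$-cylinders, so by first fixing $k$ large enough to push $\mu(C_k)$ and $\mu(Cyl^F_{\tilde\gamma}\setminus C_k)$ within $\varepsilon$ of their limits, and then using weak convergence of $\mu_n$ on these finitely many $F$-cylinders, one deduces $\mu_n(Cyl_{\widetilde{\Delta}_H}(\tilde\gamma))\to \mu(Cyl_{\widetilde{\Delta}_H}(\tilde\gamma))$.
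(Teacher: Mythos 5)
The linearity part is fine. The continuity argument has a genuine gap in the lower-bound / double-limit step, and in fact the gap cannot be closed, because the statement as written --- continuity of $r_H$ on all of $Curr(F)$ --- is false. In the decomposition $Cyl^F_{\tilde\gamma}=C_k\sqcup\bigl(Cyl^F_{\tilde\gamma}\setminus C_k\bigr)$, fixing $k$ and letting $n\to\infty$ gives $\mu_n(C_k)\to\mu(C_k)$ and $\mu_n\bigl(Cyl^F_{\tilde\gamma}\setminus C_k\bigr)\to\mu\bigl(Cyl^F_{\tilde\gamma}\setminus C_k\bigr)$, but this says nothing about $\mu_n\bigl(C_k\setminus\bigcap_j C_j\bigr)=\mu_n(C_k)-\mu_n\bigl(\bigcap_j C_j\bigr)$, which is exactly the error term you need to control: the monotone limit $\mu_n(C_k)\downarrow\mu_n\bigl(\bigcap_j C_j\bigr)$ as $k\to\infty$ holds for each fixed $n$ but need not be uniform in $n$, and your $\varepsilon$--$N$ scheme tacitly assumes this uniformity, so the lower bound is circular.

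Here is a counterexample. Take $F=F(a,b,c)$, $H=\langle a,b\rangle$, $g_n=a^nb^nc$ and $\mu_n=\frac{1}{2n+1}\eta_{g_n}$. Counting occurrence frequencies in the cyclic word $a^nb^nc$ shows $\mu_n\to\mu:=\frac{1}{2}\bigl(\eta_a+\eta_b\bigr)$ in $Curr(F)$. However every point of $supp(\eta_{g_n})$ is an $F$-translate of $\bigl(g_n^{\mp\infty},g_n^{\pm\infty}\bigr)$, and $g_n^{\pm\infty}$ contains infinitely many letters $c^{\pm1}$, so no such translate has both endpoints in $\partial H$; thus $r_H(\mu_n)=0$ for all $n$. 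Since $H$ is a malnormal free factor, $r_H(\mu)=\frac{1}{2}\bigl(\eta_a^H+\eta_b^H\bigr)\neq 0$, so $r_H(\mu_n)\not\to r_H(\mu)$. This is the familiar phenomenon that restricting Radon measures to a closed subset is not weak-$*$ continuous (compare $\delta_{1/n}\to\delta_0$ on $\R$ and restriction to $\{0\}$). The paper offers no proof, calling the statement immediate from the definitions, which appears to be an oversight; the places where the paper actually uses $r_H$ involve only approximating sequences already inside $Curr_H(F)$, where the obstruction above disappears, but that restricted continuity statement needs a different argument from the one you propose.
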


\begin{notation}
Let $G$ be a group and $g\in G$ be an element. We denote by $[g]_G$
the conjugacy class of $g$ in $G$.

\end{notation}

The proof of the following lemma given below was suggested to the
authors by Gilbert Levitt.

\begin{lem}
Let $H\le F$ be a finitely generated subgroup.

\begin{enumerate}
\item There exists an integer $n\ge 1$ with the following property.
For every $h\in H$ there exist $h_1,\dots, h_m\in H$ with $m\le n$
such that
\[
[h]_F\cap H=[h_1]_H\cup [h_2]_H\cup \dots \cup [h_m]_H.
\]

We denote by $n_{H,F}$ the smallest $n\ge 1$ with this property.
\item There exists an integer $n_1\ge 1$ with the following
property. Let $h\in H$ be any nontrivial element that is not a
proper power in $H$. Represent $h$ as $h=f^d$ where $f\in F$ is not
a power and $d\ge 1$. Then $d\le n_1$.

We denote by $d_{H,F}$ the smallest $n_1\ge 1$ with this property.

\item A finitely generated subgroup $H\le F$ is malnormal if and
only if $n_{H,F}=d_{H,F}=1$.

\end{enumerate}

\end{lem}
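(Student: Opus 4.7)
The plan is to fix a free basis $A$ of $F$, work with the Stallings subgroup graph $\Gamma_H$ of $H$ with respect to $A$ and its finite core $\Delta_H$, and show that both $n_{H,F}$ and $d_{H,F}$ are bounded above by $|V(\Delta_H)|$. Part (3) will then follow from parts (1) and (2) by short direct arguments.

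For part (1), I would invoke the standard bijection between nontrivial $H$-conjugacy classes in $H$ and cyclically immersed (unbased) loops in $\Delta_H$, obtained by passing to the cyclically reduced form in $F(A)$. An $H$-conjugacy class inside $[h]_F\cap H$ then corresponds to such a cyclic loop whose cyclic label is the cyclically reduced form $\bar h$ of $h$. Each immersion of a labeled circle of length $|\bar h|$ into $\Delta_H$ is determined by the image of any chosen vertex of the circle, so there are at most $|V(\Delta_H)|$ such immersions; quotienting by rotation only further decreases the count, and this bounds $n_{H,F}$ by $|V(\Delta_H)|$.

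For part (2), let $h\in H$ be nontrivial and not a proper power in $H$, and write $h=f^d$ with $f\in F$ primitive and $d\ge 1$. Let $\sigma$ be the cyclically immersed loop in $\Delta_H$ representing $[h]_H$, which has label $\bar f^d$, and let $v_0,v_1,\ldots,v_{d-1}$ be the cyclic sequence of vertices in $V(\Delta_H)$ at which successive copies of $\bar f$ begin along $\sigma$. Since $\Delta_H$ is folded, reading $\bar f$ from a vertex defines a partial injection on $V(\Delta_H)$, so each $v_k$ is uniquely determined from $v_0$ by iteration. If $v_i=v_j$ for some $0\le i<j<d$, running this injection backwards yields $v_0=v_{j-i}$, and the cyclic sequence has period $p:=j-i$ dividing $d$. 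But then $\sigma$ factors as $d/p$ iterations of a sub-loop $\tau\subseteq\Delta_H$ of label $\bar f^p$, and $\tau$ represents some $H$-conjugacy class $[h_1]_H$ with $[h]_H=[h_1^{d/p}]_H$, exhibiting $h$ as a proper $(d/p)$-th power in $H$ and contradicting the hypothesis. Hence the $v_i$ are pairwise distinct and $d\le|V(\Delta_H)|$.

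For part (3), the forward direction is direct: if $H$ is malnormal and $h\in H$, $h\neq 1$, then any $g\in F$ with $ghg^{-1}\in H$ gives $gHg^{-1}\cap H\neq\{1\}$ and hence $g\in H$, yielding $n_{H,F}=1$; and if $h=f^d\in H$ is not a proper power in $H$ with $f$ primitive in $F$, then $f\in C_F(h)$ gives $h\in fHf^{-1}\cap H\neq\{1\}$, forcing $f\in H$ by malnormality and hence $d=1$. For the backward direction, assume $n_{H,F}=d_{H,F}=1$ and pick a nontrivial $h=gh'g^{-1}\in gHg^{-1}\cap H$ with $h'\in H$; then $n_{H,F}=1$ gives $k\in H$ with $h'=k^{-1}hk$, whence $gk^{-1}\in C_F(h)=\langle f\rangle$ for $f$ the $F$-primitive root of $h$. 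Taking an $H$-primitive root $h_0\in H$ of $h$, we have $h_0\in\langle f\rangle$, so $h_0=f^j$ for some $j\ge 1$; then $d_{H,F}=1$ forces $j=1$, so $f=h_0\in H$ and $g\in H$. The main technical obstacle is the factorization step in part (2): one must verify carefully that $\sigma$ genuinely decomposes as $d/p$ traversals of a sub-loop $\tau$ in $\Delta_H$, and that $\tau$ represents an $H$-conjugacy class (rather than a class in some conjugate of $H$), so that the resulting identity $[h]_H=[h_1^{d/p}]_H$ really forces $h$ to be a proper power in $H$; everything else amounts to routine bookkeeping with Stallings graphs and the standard description of centralizers in free groups.
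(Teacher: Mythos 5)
Your proof is correct, and it takes a genuinely different route from the paper's. The paper invokes Marshall Hall's theorem to realize $H$ as a free factor of a finite-index subgroup $U=H\ast K\le F$; part~(1) then follows by splitting $[h]_F$ across the $p=[F:U]$ cosets of $U$ and using malnormality of the free factor $H$ in $U$ (giving the bound $n_{H,F}\le p$), while part~(2) passes further to a normal finite-index subgroup $U_1\le U$ and argues that the minimal $t$ with $f^t\in U$ satisfies $t\le[F:U_1]$ and that $d\le t$ (giving $d_{H,F}\le[F:U_1]$). For part~(3) the paper merely asserts that both directions are easy. You instead work intrinsically with the core $\Delta_H$ of the Stallings graph of $H$: part~(1) via the standard bijection between $H$-conjugacy classes in $[h]_F\cap H$ and cyclic immersions of the circle labelled $\bar h$ into $\Delta_H$, which are parametrized by the image of a single vertex and hence number at most $|V(\Delta_H)|$; and part~(2) via the observation that reading $\bar f$ in the folded graph is a partial injection on vertices, so that the cyclic sequence $v_0,\dots,v_{d-1}$ of return vertices either is injective (giving $d\le|V(\Delta_H)|$) or has a proper period $p\mid d$, in which case $\sigma$ factors as $d/p$ traversals of a sub-loop $\tau$ whose label is $\bar f^{\,p}$, exhibiting $[h]_H$ as a proper power in $H$ and contradicting the hypothesis. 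Your part~(3) is a correct fleshing-out of what the paper leaves to the reader, using the description of centralizers in free groups.

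The two approaches buy slightly different things. Your Stallings-graph argument is more self-contained and yields the explicit, intrinsic bound $n_{H,F},\,d_{H,F}\le|V(\Delta_H)|$ directly from the data of $H$, avoiding Marshall Hall entirely; the determinism/periodicity step in part~(2) is the only place requiring care, and you correctly identify and resolve the two issues (that $\tau$ is a genuine cyclically reduced sub-loop based at a vertex of $\Delta_H$, so that it represents an $H$-conjugacy class, and that the resulting power relation holds up to $H$-conjugacy, which suffices since being a proper power is conjugacy-invariant). The paper's argument is shorter at the top level and conceptually transparent once one accepts that free factors are malnormal, but the constant it produces, an index $[F:U]$ or $[F:U_1]$, is generally less directly computable from $H$ than $|V(\Delta_H)|$. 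Either proof establishes the lemma.
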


\begin{proof}
It is easy to see that if $H\le F$ is malnormal then
$n_{H,F}=d_{H,F}=1$ satisfy the requirements of the lemma.
Similarly, if $n_{H,F}=d_{H,F}=1$, it is not hard to show that $H\le
F$ is malnormal. Thus part (3) of the lemma holds.

Suppose now that $H\le F$ is an arbitrary finitely generated
subgroup. By Marshall Hall's Theorem there exist subgroups $K,U\le
F$ such that $U=\langle H,K\rangle =H\ast K$ has finite index in
$F$. Let $p=[F:U]$. Choose $f_1,\dots, f_p\in F$ such that
$F=\cup_{j=1}^p Uf_j$.

Let $h\in H, h\ne 1$ be arbitrary. Let $J=\{j| 1\le j\le p,
f_{j}^{-1}hf_j\in U\}$. Every element of $F$ has the form $uf_j$,
where $u\in U$, $1\le j\le p$. Hence it is obvious that
\[
[h]_F\cap U=\cup_{j\in J} [f_j^{-1}hf_j]_U
\]
Let $J'=\{j\in J: f_j^{-1}hf_j\in H\}$. Since $H\le U=H\ast K$ is
malnormal in $U$, it follows that for $j\in J'$ we have
\[
[f_j^{-1}hf_j]_U\cap H=[f_j^{-1}hf_j]_H
\]
and for $j\in J-J'$ we have
\[
[f_j^{-1}hf_j]_U\cap H=\varnothing.
\]

Therefore
\[
[h]_F\cap H=\cup_{j\in J'}[f_j^{-1}hf_j]_H.
\]
Since $h\in H, h\ne 1$ was arbitrary, we see that $n_{H,F}\le
p=[F:U]$ and part (1) of the lemma is proved.

Now let $U_1\le U$ be such that $U_1\le F$ is normal and of finite
index. Let $q=[F:U_1]$. It is obvious that for any $f\in F$ we have
$f^q\in U_1$ and thus $f^q\in U$.

Now let $h\in H, h\ne 1$ be an element that is not a proper power in
$H$. Let $h=f^d$ where $f\in F$, $d\ge 1$ and where $f\in F$ is not
a proper power. Note that since $H\le U=U\ast K$ is malnormal in
$U$, it follows that $h$ is not a proper power in $U$. Let $t\ge 1$
be the smallest such that $f^t\in U$. Note that, by the above
remark, $t\le q=[F:U_1]$. Note also, that, since $H$ is a free
factor of $U$ and since $(f^t)^d=h^t\in H$, it follows that $f^t\in
H$.

Suppose that $d>t$. If $d$ is divisible by $t$, $d=ts$, then $f^t\in
U$ and $f^{ts}\in H$, where $s>1$. Since $f^t\in H$ and $h=(f^t)^s$,
this contradicts our assumption that $h\in H$ is not a proper power
in $H$. If $d$ is not divisible by $t$, we can write $d=ts+r$ where
$0<r<t$. Since $f^d=h\in U$, $f^t\in U$, it follows that $f^r\in U$.
Since $0<r<t$, this contradicts the choice of $t$. Thus $d\le t\le
q=[F:U_1]$. Hence $d_{H,F}\le [F:U_1]$ and part (2) of the lemma is
proved.

\end{proof}

The following lemma follows easily from the definitions of counting
currents in terms of delta-functions:

\begin{lem}\label{lem:split}
Let $H\le F$ be a nonabelian finitely generated subgroup. Let $h\in
H$ be a nontrivial element such that $h$ is not a proper power in
$H$. Let $h_1,\dots, h_m\in H$ be pairwise non-conjugate elements
such that $[h]_F\cap H=[h_1]_H\cup [h_2]_H\cup \dots \cup [h_m]_H$.
Let $h=f^d$ where $d\ge 1$ and $f\in F$ is not a proper power in
$F$.

Then:

\begin{enumerate}
\item We have $\eta_h^F|_H=d\sum_{i=1}^m \eta_{h_i}^H$.

\item If $H\le F$ is malnormal then $d=m=1$ and
$\eta_h^F|_H=\eta_h^H$.

\end{enumerate}

\end{lem}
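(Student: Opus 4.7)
The plan is to unwind the delta-measure definition of counting currents, restrict carefully to $\partial^2 H\subseteq\partial^2 F$, and match the resulting sum to the claimed right-hand side. Since $h=f^d$ with $f$ not a proper power in $F$, the definition yields $\eta_h^F=d\,\eta_f^F$ and
\[
\eta_f^F=\sum_{(x,y)\in \mathcal R(f)}\delta_{(x,y)},
\]
where $\mathcal R(f)\subseteq\partial^2 F$ is the $F$-orbit of $(f^{-\infty},f^\infty)$ together with its flip. Restricting to $\partial^2 H$ reduces the task to enumerating $\mathcal R(f)\cap\partial^2 H$ and organizing it into $H$-orbits, each of which will contribute an $\eta_{h_i}^H$-summand.

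The key identification uses quasiconvexity of $H$ in $F$: a pair $(uf^{-\infty},uf^{\infty})$ lies in $\partial^2 H$ precisely when $u\langle f\rangle u^{-1}\cap H$ is nontrivial. Denoting by $k_0=k_0(u)\ge 1$ the smallest positive exponent with $uf^{k_0}u^{-1}\in H$, the element $g:=uf^{k_0}u^{-1}$ lies in $H$, is not a proper power in $H$ (combining minimality of $k_0$ with the self-normalization of maximal cyclic subgroups in $F$), and its axis-pair in $\partial^2 F$ is the given pair. The $H$-stabilizer of that pair is then $\langle g\rangle$, so its $H$-orbit is parametrized by $H/\langle g\rangle$ and contributes the measure $\eta_g^H$ to $\eta_f^F|_H$. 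Matching each such $[g]_H$ with the $H$-conjugacy class $[g^{d/k_0}]_H=[uhu^{-1}]_H\subseteq[h]_F\cap H$ identifies the orbits with the classes $[h_i]_H$, and rewriting $\eta_{g_i}^H$ in terms of $\eta_{h_i}^H$ using the convention $\eta_{g^{m}}^H=m\eta_g^H$ (applied with $h_i=g_i^{m_i}$) yields the claimed formula after multiplying by $d$.

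For part~(2), suppose $H$ is malnormal. If $d\ge 2$ then $f\notin H$ (otherwise $h=f^d$ would be a proper power in $H$, contradicting the hypothesis); but $fhf^{-1}=h$ produces a nontrivial element of $fHf^{-1}\cap H$, forcing $f\in H$ by malnormality, a contradiction. Hence $d=1$. Malnormality also implies that any $u\in F$ with $uhu^{-1}\in H$ must satisfy $u\in H$, so $[h]_F\cap H=[h]_H$ and $m=1$; the formula in~(1) then collapses to $\eta_h^F|_H=\eta_h^H$. The delicate point in the middle step is the combinatorial bookkeeping: verifying that the assignment $[g]_H\mapsto[g^{d/k_0}]_H$ gives a well-defined bijection onto the $H$-conjugacy classes in $[h]_F\cap H$, and correctly accounting for the multiplicities $m_i$ with which each $h_i$ arises as a proper power in $H$.
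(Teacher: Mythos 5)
The paper supplies no proof for this lemma; it is declared to ``follow easily from the definitions.'' Your framework (unwind the $\delta$-measure form of $\eta_f^F$, restrict to $\partial^2 H$, and group the surviving points into $H$-orbits) is the right way to approach it, and your argument for part~(2) is correct and complete: $fhf^{-1}=h$ together with malnormality forces $f\in H$ and hence $d=1$, and malnormality forces $[h]_F\cap H=[h]_H$ so $m=1$.

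For part~(1), however, the step you yourself flag as ``the delicate point'' is a genuine gap, and in fact the assertion fails there. Given an $H$-orbit in $\mathcal R(f)\cap\partial^2 H$ represented by $(uf^{-\infty},uf^\infty)$, the primitive (in $H$) stabilizer generator is $g=uf^{k_0}u^{-1}$ where $k_0$ is the least positive exponent with $uf^{k_0}u^{-1}\in H$. This $g$ is $F$-conjugate to $f^{k_0}$, not to $h=f^d$, and there is no reason for $k_0$ to equal $d$, nor even to divide $d$. When $k_0\nmid d$ (or $k_0>d$) the element $uhu^{-1}=uf^du^{-1}$ does not lie in $H$ at all, so this $H$-orbit contributes mass to $\eta_h^F|_H$ but corresponds to \emph{no} class $[h_i]_H$. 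When $k_0\mid d$ with $k_0<d$, the element $uhu^{-1}=g^{d/k_0}$ is a proper power in $H$, and the rewriting $\eta_{g}^H=\tfrac{k_0}{d}\eta_{uhu^{-1}}^H$ introduces the factor $k_0/d\neq 1$ which does not cancel against the outer $d$. Either way your ``bookkeeping'' cannot produce the formula $\eta_h^F|_H=d\sum_i\eta_{h_i}^H$.

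Indeed the lemma as stated is false. Take $F=F(a,b)$ and $H=\langle a,\ ba^2b^{-1}\rangle$, with $h=f=a$ and $d=1$. Then $h$ is primitive in $H$, and reading the Stallings graph shows $[a]_F\cap H=[a]_H$, so $m=1$ and $h_1=a$. The right-hand side of the formula is $1\cdot\eta_a^H$. But $ba^2b^{-1}\in H$ places $(ba^{-\infty},ba^\infty)\in\mathcal R(a)\cap\partial^2 H$, and this pair is not in the $H$-orbit of $(a^{-\infty},a^\infty)$ (since $ba^k\notin H$ for all $k$). Hence $\eta_a^F|_H=\eta_a^H+\eta_{ba^2b^{-1}}^H\neq\eta_a^H$. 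Here $k_0(b)=2>d=1$, so this second orbit is invisible to $[h]_F\cap H$. Similar examples with $d>1$ (e.g.\ $H=\langle a^2, bab^{-1}\rangle$, $h=a^2$) realize the multiplicity mismatch when $k_0\mid d$. The correct version should replace $d\sum_i\eta_{h_i}^H$ by a sum, over $H$-conjugacy classes of the primitive elements $g=uf^{k_0}u^{-1}$ arising as above, of $d\,\eta_g^H$; the coefficients in front of the $\eta_{h_i}^H$ then depend on the individual $k_0$'s, and extra terms not indexed by any $h_i$ may appear. You should flag this when it is invoked in the proof of Proposition~\ref{prop:restr}, where only a two-sided bound $\langle T,\mu\rangle\le\langle T_H,\mu|_H\rangle\le C\langle T,\mu\rangle$ for a constant $C=C(H,F)$ is actually needed; that weaker statement can be salvaged, but not through the formula as written.
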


%\begin{notation}
%Let $H\le F$ be a finitely generated subgroup. Let $\mu\in Curr(F)$.

%We write $supp(\mu)\subseteq i_\Lambda(\partial^2 H)$ if
%$supp(\mu)\subseteq \cup_{f\in F} f\partial^2H$, that is, if for
%every open set \[ U\subseteq
%\partial^2F-\cup_{f\in F} f\partial^2H\] we have $\mu(U)=0$.
%\end{notation}

\begin{lem}\label{lem:sgp}
Let $H\le F$ be a finitely generated subgroup.  Let $A$ be a free
basis of $F$ and let $\Gamma_H$ be the Stallings subgroup graph for
$H\le F$ with respect to $A$.

Let $\mu\in Curr(F)$. Then $supp(\mu)\subseteq i_\Lambda(\partial^2
H)$ if and only if for every $v\in F(A)$ with $\langle v,
\mu\rangle>0$ there is a reduced edge-path labelled by $v$ in
$Core(\Gamma_H)$. That is, $supp(\mu)\subseteq i_\Lambda(\partial^2
H)$ if and only if for each $v\in F(A)$ that cannot be read along a
reduced path in $Core(\Gamma_H)$ we have $\langle v,\mu\rangle=0$.

\end{lem}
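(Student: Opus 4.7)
The plan is to combine three earlier results in a short chain of equivalences: (i) the fact that an algebraic lamination on $F$ is determined by its laminary language with respect to $A$, so that $L_1\subseteq L_2$ iff $(L_1)_A\subseteq (L_2)_A$; (ii) the characterization of $supp_A(\mu)$ as exactly the set of freely reduced $v\in F(A)$ with $\langle v,\mu\rangle_A>0$; and (iii) the explicit description of the laminary language of $i_\Lambda(\partial^2H)$ in terms of bi-infinite reduced edge-paths in $Core(\Gamma_H)$ established in Lemma~\ref{lem:H} and its corollary.

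Setting $L=supp(\mu)\in\Lambda^2(F)$ and $L'=i_\Lambda(\partial^2H)\in\Lambda^2(F)$, observations (i) and (ii) reduce the containment $supp(\mu)\subseteq i_\Lambda(\partial^2 H)$ to the statement: for every freely reduced $v\in F(A)$ with $\langle v,\mu\rangle_A>0$, one has $v\in L'_A$. So it suffices to identify $L'_A$ with the set of $v\in F(A)$ that label reduced edge-paths in $Core(\Gamma_H)$.

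For this, recall from the corollary to Lemma~\ref{lem:H} that a pair $(\xi_1,\xi_2)\in\partial^2F$ lies in $i_\Lambda(\partial^2 H)$ iff the label of the bi-infinite geodesic from $\xi_1$ to $\xi_2$ in $X(F,A)$ is the $F(A)$-label of some bi-infinite reduced edge-path in $Core(\Gamma_H)$. Hence $L'_A$ is precisely the set of finite subwords of labels of bi-infinite reduced edge-paths in $Core(\Gamma_H)$. Now use the fact that, by definition, $Core(\Gamma_H)$ has no degree-one vertices: any finite reduced edge-path in $Core(\Gamma_H)$ can be extended in both directions to a bi-infinite reduced edge-path in $Core(\Gamma_H)$. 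Therefore $L'_A$ equals exactly the set of $F(A)$-labels of reduced edge-paths in $Core(\Gamma_H)$, which is the condition in the statement.

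The only potential technical point is the bi-infinite-versus-finite extension used at the end, plus handling the (vacuous) degenerate cases in which $H$ is trivial or cyclic, where $Core(\Gamma_H)$ is empty or a single circle and the assertion reduces directly to $\mu=0$ or to an obvious condition. Otherwise the proof is a direct assembly of previously established equivalences.
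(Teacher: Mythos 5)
Your proof is correct and assembles the same underlying ingredients, but the route is framed somewhat differently from the paper's own argument. The paper argues directly: it identifies the convex hull of $\partial H$ in $X(F,A)$ with the universal cover of $Core(\Gamma_H)$ and establishes a dichotomy on freely reduced words $v$ — either every segment in $X(F,A)$ labelled by $v$ lies in some $F$-translate of that subtree and the corresponding cylinder meets $i_\Lambda(\partial^2 H)$, or no such segment does and every such cylinder is disjoint from $i_\Lambda(\partial^2 H)$ — and then reads off the lemma from the definition of support. Your version instead routes through the abstract laminary-language apparatus: the CHL proposition that $L\subseteq L'$ iff $L_A\subseteq L'_A$, the identification of $supp_A(\mu)$ with $\{v:\langle v,\mu\rangle_A>0\}$, and the description of the laminary language of $i_\Lambda(\partial^2 H)$ via Lemma~\ref{lem:H} and its corollary, finishing with the observation that $Core(\Gamma_H)$ has no degree-one vertices so that finite reduced paths extend to bi-infinite ones. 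Both arguments turn on the same geometric fact (readability in $Core(\Gamma_H)$ $\leftrightarrow$ cylinder meeting $i_\Lambda(\partial^2 H)$); what the paper's route buys is self-containedness, avoiding any reliance on the general laminary-language correspondence, while yours buys brevity by reusing machinery already recorded in Section~3. One point worth being careful about in your version: the implication "$supp_A(\mu)\subseteq (i_\Lambda(\partial^2 H))_A \Rightarrow supp(\mu)\subseteq i_\Lambda(\partial^2 H)$" is really being justified here by a compactness (K\"onig's lemma) argument — a bi-infinite word all of whose finite subwords are readable in the \emph{finite} graph $Core(\Gamma_H)$ is itself readable — rather than by invoking the black-boxed CHL proposition for arbitrary pairs of laminations; it is worth saying this explicitly, since $i_\Lambda(\partial^2 H)$ has the needed "maximality" property while a general lamination need not. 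Your treatment of the degenerate cases ($H$ trivial or cyclic) is fine.
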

\begin{proof}
Note that the convex hull $T$ of $\partial H$ in $X(F,A)$ is a
minimal $H$-invariant subtree in $X(F,A)$ and that $T$ can also be
seen as a copy of the universal cover of $Core(\Gamma_H)$ in
$X(F,A)$.

It is not hard to see that for $v\in F(A)$ exactly one of the
following two alternatives holds:

(a) Every segment labelled by $v$ in $X(F,A)$ is contained in some
$F$-translate of $T$ in $F(X,A)$ and every such segment defines a
two-sided cylinder that intersects nontrivially some $F$-translate
of $\partial^2 H$ in $\partial^2F$.

(b) No segment labelled by $v$ in $X(F,A)$ is contained in an
$F$-translate of $T$ and every such segment defines a two-sided
cylinder that is disjoint from every $F$-translate of $\partial^2H$
in $\partial^2F$.

This fact immediately implies the statement of the lemma.
\end{proof}

\begin{notation}
Let $H\le F$ be a finitely generated subgroup. We denote by
$Curr_H(F)$ the set of all $\mu\in Curr(F)$ such that
$supp(\mu)\subseteq i_\Lambda(\partial^2 H)$.
\end{notation}

Lemma~\ref{lem:sgp} immediately implies:

\begin{lem}
Let $H\le F$ be a finitely generated subgroup. Then $Curr_H(F)$ is a
closed affine subspace of $Curr(F)$.
\end{lem}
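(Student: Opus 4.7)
The plan is to deduce the statement directly from Lemma~\ref{lem:sgp} by exhibiting $Curr_H(F)$ as an intersection of zero-sets of continuous linear functionals on $Curr(F)$.

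First I would fix a free basis $A$ of $F$ and let $\Gamma_H$ denote the Stallings subgroup graph of $H$ with respect to $A$. Let $\mathcal{B}$ be the set of nontrivial freely reduced words $v \in F(A)$ that cannot be read as the label of a reduced edge-path in $Core(\Gamma_H)$. By Lemma~\ref{lem:sgp}, we then have the identification
\[
Curr_H(F) \; = \; \{\mu \in Curr(F) : \langle v,\mu\rangle_A = 0 \text{ for every } v \in \mathcal{B}\}.
\]

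Next, for each $v \in \mathcal{B}$, I would verify that the evaluation map $\Phi_v : Curr(F) \to \mathbb{R}$, $\mu \mapsto \langle v,\mu\rangle_A = \mu(Cyl_{\widetilde\Gamma}(\gamma))$ (for $\gamma$ any lift of $v$), is continuous and linear. Linearity is immediate because $\mu \mapsto \mu(U)$ is linear in $\mu$ for any fixed Borel set $U$. Continuity follows from the description of the weak topology on $Curr(F)$ recalled in Definition~\ref{defn:current}: convergence $\mu_n \to \mu$ in $Curr(F)$ is equivalent to convergence $\langle v,\mu_n\rangle_A \to \langle v,\mu\rangle_A$ for every nontrivial reduced edge-path $v$. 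Hence $\Phi_v^{-1}(0)$ is a closed linear (in particular, affine) subspace of the ambient linear span of $Curr(F)$.

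Finally I would conclude by observing that
\[
Curr_H(F) \; = \; \bigcap_{v \in \mathcal{B}} \Phi_v^{-1}(0),
\]
which is an intersection of closed affine subspaces and therefore itself a closed affine subspace of $Curr(F)$ (it is in fact a closed convex subcone, containing the zero current). There is no real obstacle here: the content of the result is entirely packaged into Lemma~\ref{lem:sgp}; the present lemma is just the formal reformulation in terms of linear functionals. The only point requiring any care is checking that the cylinder-evaluation functionals $\Phi_v$ are indeed continuous in the weak topology on $Curr(F)$, which is precisely the content of the convergence criterion stated in Definition~\ref{defn:current}.
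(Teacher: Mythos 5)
Your proof is correct and is exactly the unpacking the paper intends: the paper itself gives no proof, saying only that the lemma follows immediately from Lemma~\ref{lem:sgp}, and your argument (identify $Curr_H(F)$ as the common zero set of the continuous linear evaluation functionals $\Phi_v$ for $v$ not readable in $Core(\Gamma_H)$, then take the intersection of closed affine subspaces) is precisely that deduction. No gaps.
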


\begin{prop}\label{prop:appr}
Let $H\le F$ be a finitely generated subgroup and let $\mu\in
Curr_H (F)$.
Then there exist a sequence $h_i\in H$ and a sequence $\lambda_i\ge
0$ such that
\[
\mu=\lim_{i\to\infty} \lambda_i \eta^F_{h_i}.
\]
\end{prop}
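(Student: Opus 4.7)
The plan is to restrict $\mu$ to $H$, approximate the restriction by rational currents from $H$, and then transfer the approximation back to $Curr(F)$. Fix a free basis $A$ of $F$ and let $\Gamma_H$ denote the Stallings subgroup graph of $H$ with respect to $A$, giving the simplicial chart $\alpha_{H,A}:H\to \pi_1(\Gamma_H,y)$. By Lemma~\ref{lem:sgp}, the hypothesis $\mu\in Curr_H(F)$ is equivalent to saying that $\langle v,\mu\rangle_A = 0$ for every freely reduced $v \in F(A)$ that cannot be read along any reduced edge-path in $Core(\Gamma_H)$. If $H$ is cyclic the statement is immediate (then $Curr_H(F)$ is just $\R_{\ge 0}\cdot\eta^F_h$), so I assume $H$ is nonabelian, hence a finitely generated nonabelian free group in its own right.

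Set $\nu:=\mu|_H \in Curr(H)$. Since rational currents are dense in $Curr(H)$, there exist $h_i \in H$ and $\lambda_i \ge 0$ with $\nu = \lim_i \lambda_i \eta^H_{h_i}$ in $Curr(H)$. The main claim is that with the same choices one has $\mu = \lim_i \lambda_i \eta^F_{h_i}$ in $Curr(F)$, and to verify this it suffices to check convergence of the cylinder values $\langle v,\cdot\rangle_A$ for every freely reduced $v \in F(A)$. The central bookkeeping identity is that for each $h \in H$,
\[
\langle v, \eta^F_h\rangle_A \;=\; \sum_{\gamma \in P(v)}\langle \gamma, \eta^H_h\rangle_{\alpha_{H,A}},
\]
where $P(v)$ is the finite set of reduced edge-paths $\gamma$ in $\Gamma_H$ whose $F(A)$-label is $v$ or $v^{-1}$. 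This holds because the cyclic $F(A)$-word representing $[h]_F$ is exactly the label of the cyclic reduced $\Gamma_H$-path representing $[h]_H$, so each occurrence of $v^{\pm 1}$ in the former decomposes uniquely into occurrences of a single $\gamma\in P(v)$ in the latter.

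The remaining ingredient, and the main obstacle, is the analogous identity for $\mu$ itself, namely $\langle v,\mu\rangle_A = \sum_{\gamma\in P(v)}\langle\gamma,\nu\rangle_{\alpha_{H,A}}$. This I obtain by a cylinder decomposition: both sides vanish by Lemma~\ref{lem:sgp} when $P(v)=\emptyset$, and otherwise one decomposes $Cyl_A(v)\cap i_\Lambda(\partial^2 H)$ as a union of pieces $Cyl_A(v)\cap f\partial^2 H$ over the finitely many cosets $fH\in F/H$ parametrised by $P(v)$, and uses $F$-invariance of $\mu$ to identify each piece with $\langle\gamma,\nu\rangle_{\alpha_{H,A}}$. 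Combined with the identity above and the approximation in $Curr(H)$, this gives $\lim_i\lambda_i\langle v,\eta^F_{h_i}\rangle_A = \langle v,\mu\rangle_A$ for every freely reduced $v$, hence the desired convergence in $Curr(F)$. The technical difficulty is that the $F$-translates $\{f\partial^2 H\}$ inside $\partial^2 F$ generally overlap on sets of the form $\partial^2(H\cap fHf^{-1})$, so the above union need not be set-theoretically disjoint. I plan to handle this either by an inclusion--exclusion argument using the uniform bounds $n_{H,F}$ and $d_{H,F}$ from the preceding lemma together with Lemma~\ref{lem:split}, or, more cleanly, by first passing via Marshall Hall's theorem to a finite-index overgroup $U = H\ast K$ in which $H$ is a free factor (hence malnormal); then the analogous decomposition in $Curr(U)$ becomes genuinely disjoint, and the $Curr(F)$-statement can be recovered from the continuous inclusion $Curr(F) \hookrightarrow Curr(U)$.
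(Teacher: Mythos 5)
There is a genuine gap, and it sits precisely where you flag it, but the difficulty is more severe than a bookkeeping technicality: the ``restrict, approximate, lift'' recipe is structurally wrong, and neither of your proposed repairs removes the error. Notice the asymmetry: the rational identity $\langle v,\eta^F_h\rangle_A=\sum_{\gamma\in P(v)}\langle\gamma,\eta^H_h\rangle_{\alpha_{H,A}}$ holds exactly as written (the cyclic $A$-word representing $[h]_F$ is the $A$-label of the circuit in $Core(\Gamma_H)$ representing $[h]_H$), whereas the corresponding identity for $\mu$ is false whenever $H$ fails to be malnormal, because by Lemma~\ref{lem:split} the restriction map satisfies $r_H(\eta^F_h)=d\sum_j\eta^H_{h_j}$ rather than $r_H(\eta^F_h)=\eta^H_h$. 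Consequently, if you choose $h_i\in H$, $\lambda_i\ge 0$ with $\lambda_i\eta^H_{h_i}\to\nu=\mu|_H$ in $Curr(H)$, the rational identity forces $\langle v,\lambda_i\eta^F_{h_i}\rangle_A\to\sum_{\gamma\in P(v)}\langle\gamma,\nu\rangle_{\alpha_{H,A}}$, which can be strictly larger than $\langle v,\mu\rangle_A$. A concrete counterexample: in $F=F(a,b)$ take $H=\langle a,\,bab^{-1}\rangle$ and $\mu=\eta^F_a\in Curr_H(F)$. Then $Core(\Gamma_H)$ consists of two $a$-loops $\gamma_1,\gamma_2$ joined by a $b$-edge, $[a]_F\cap H=[a]_H\sqcup[bab^{-1}]_H$, so $\nu=\eta^H_a+\eta^H_{bab^{-1}}$ and $\sum_{\gamma\in P(a)}\langle\gamma,\nu\rangle_{\alpha_{H,A}}=2$ while $\langle a,\mu\rangle_A=1$. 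With $h_i=a^i(bab^{-1})^i$ and $\lambda_i=1/(i+1)$ one has $\lambda_i\eta^H_{h_i}\to\nu$, yet $\langle a,\lambda_i\eta^F_{h_i}\rangle_A=\frac{2i}{i+1}\to 2\ne 1$, so $\lambda_i\eta^F_{h_i}\not\to\mu$. Because the mismatch lives on the rational side of the ledger (which needs no correction), an inclusion--exclusion correction applied only to the $\mu$-identity cannot reconcile the two; and passing to a finite-index overgroup $U=H\ast K$ does not help either, both because $supp(\mu)\subseteq\bigcup_{f\in F}f\partial^2 H$ then splits across several $U$-orbits of $\partial^2 H$ rather than the single orbit $\bigcup_{u\in U}u\partial^2 H$, and because what you would produce are $\eta^U_{h_i}$, not $\eta^F_{h_i}$, which again differ by Lemma~\ref{lem:split}.

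The paper sidesteps the restriction map entirely and works directly with cyclic words: taking cyclic $A$-words $w_i$ with $\eta_{w_i}/||w_i||_A\to\mu$, it subdivides each $w_i$ into blocks of length $N$, uses the hypothesis $supp(\mu)\subseteq i_\Lambda(\partial^2 H)$ together with Lemma~\ref{lem:sgp} to conclude that all but a vanishing proportion of blocks are readable in $Core(\Gamma_H)$, replaces the few bad blocks by arbitrary core paths of the same length, and inserts uniformly short core connectors to produce a closed reduced path $w_i'$ in $Core(\Gamma_H)$. The $\langle v,\cdot\rangle_A$-frequencies of $w_i'$ agree with those of $w_i$ up to an error controlled by $N$ and the bad-block proportion, and $w_i'$ represents the conjugacy class of an element $h_i\in H$, which gives the desired sequence directly. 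Note also that in the paper the logical order is that Proposition~\ref{prop:appr} is proved first and then used to establish Proposition~\ref{prop:restr}; your proposal would invert that dependency.
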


\begin{proof}
Fix a free basis $A$ of $F$. By re-scaling we may assume that
$||\mu||_A=1$. Let $w_i$ be a sequence of reduced cyclic words in
$F(A)$ such that
$\lim_{i\to\infty}\frac{\eta_{w_i}}{||w_i||_A}=\mu$.

Let $\Gamma_H$ be the Stallings subgroup graph of $H$ with respect
to $A$. Let $C>0$ be such that for any directed edges $e,e'$ in
$Core(\Gamma_H)$ there exists an edge-path $p$ of length $\le C$ in
$Core(\Gamma_H)$ such that $epe'$ is a reduced edge-path in
$Core(\Gamma_H)$.

It suffices to prove that for every $M\ge 1$ and every $\epsilon>0$
there exists a cyclic path in $Core(\Gamma_H)$ labelled by a cyclic
word $w$ such that for every $v\in F(A)$ with $|v|_A=M$ we have
\[
\left| \frac{\langle v^{\pm 1}, w\rangle}{||w||_A}-\langle v,
\mu\rangle\right|\le \epsilon.\tag{$\ast\ast$}
\]

Let an integer $M\ge 1$ and a real number $\epsilon>0$ be arbitrary. Let $\alpha=\max_{|v|_A=M}\langle
v,\mu\rangle$.

Choose an integer $N>M\ge 1$ be such that

\[
(\alpha+\epsilon)(1-\frac{N}{N+C})\le \epsilon,
\]
and such that $\frac{M+C}{N}\le \epsilon/3$.

By replacing $w_i$ by their high powers if necessary, we may assume
that $||w_i||_A$ is divisible by $N$ for every $i\ge 1$ and that
$\displaystyle\lim_{i\to\infty} ||w_i||_A=\infty$.

Since $\mu=\lim_{i\to\infty} \frac{1}{||w_i||_A}\eta_{w_i}$, there exists
$i_0\ge 1$ such that for every $i\ge i_0$  and every $v\in F$
with $|v|_A=M$ we have
\[
\left| \frac{\langle v^{\pm 1}, w_i\rangle}{||w_i||_A}-\langle v,
\mu\rangle \right|\le \epsilon.
\]
Hence, by the choice of $N$, for any $v\in F$ with $|v|_A=M$ and for
any $0\le x\le \langle v^{\pm 1}, w_i\rangle$ we have
\[
\left|\frac{x}{||w_i||_A}-\frac{x}{||w_i||_A}\frac{N}{N+C}\right|\le
\epsilon.
\]

Let $\epsilon_1>0$ be such that $N\epsilon_1\le \epsilon/3$.

Let $i\ge i_0$. Put $m=||w_i||_A/N$ and write $w_i$ as a cyclic
concatenation $w_i=y_1\dots y_m$ of segments of length $N$. As
before, for $j=1,\dots, m$ we say that $y_j$ is \emph{good} if
$y_j\in supp(\mu)$ and say that $y_j$ is \emph{bad} otherwise. Let
$m=m_{bad}+m_{good}$ where $m_{bad}$ is the number of those
$j=1,\dots, m$ such that $y_j$ is bad.

Since $\lim_{i\to\infty}\frac{\eta_{w_i}}{||w_i||_A}=\mu$, for any fixed freely reduced word
$v\in F(A)$ of length $N$ the symmetrized frequencies $\frac{\langle v^{\pm 1}, w_i\rangle_A}{||w_i||_A}$
of $v$ in $w_i$ converge to $\langle v,\mu\rangle_A$ as
$i\to\infty$. In particular, if $v\not\in supp_A(\mu)$, these
frequencies converge to zero. It follows that there exists
$i_1\ge i_0$ such that for every $i\ge i_1$ we have $m_{bad}\le
\epsilon_1 ||w_i||_A$.

Let $i\ge i_1$. For each $j=1,\dots, m$ such that $y_j$ is good
choose a path $p_j$ in $Core(\Gamma_H)$ labelled by $y_j$ and put
$z_j=y_j$ in this case. For each $j$ such that $y_j$ is bad choose
any reduced path $p_j$ of length $N$ in $Core(\Gamma_H)$. Put $z_j$
to be the label of $p_j$ in this case.

Finally, choose reduced paths $p_1',\dots, p_m'$ in
$Core(\Gamma_H)$, of length $\le C$ each, such that
$p_1p_1'p_2p_2'\dots p_jp_j'$ defines a reduced cyclic path $\alpha$
in $Core(\Gamma_H)$. Let $r_j$ be the label of $p_j'$. Then the
label of the cyclic path $\alpha$ is the cyclic word $w_i'$ obtained
as a cyclic concatenation
\[
w_i'=z_1r_1\dots z_mr_m.
\]
Since $|z_j|=N$ and $|r_j|\le C$, we have
\[
||w_i||_A\le ||w_i'||_A\le ||w_i||_A+mC,
\]
and hence
\[
1\le \frac{||w_i'||_A}{||w_i||_A}\le 1+\frac{C}{N}.\tag{$\dag$}
\]

Note that by construction $w_i'$ represents the $F$-conjugacy class
of some $h_i\in H$.

Let $v\in F(A)$ be any freely reduced word of length $M$. We want to
compare the frequencies of $v$ in $w_i$ and $w_i'$. We say that an
occurrence of $v^{\pm 1}$ in $w_i$ is \emph{good} if it lies in some
good $y_j$ and that this occurrence is \emph{bad} otherwise. Let
$\langle v^{\pm 1}, w_i\rangle=\langle v^{\pm 1},
w_i\rangle_{good}+\langle v^{\pm 1}, w_i\rangle_{bad}$ where
$\langle v^{\pm 1}, w_i\rangle_{good}$ is the number of good
occurrences of $v$ in $w_i$. Note that
\[
\langle v^{\pm 1}, w_i\rangle_{bad}\le Nm_{bad}+M m\le
N\epsilon_1||w_i||_A+Mm.
\]

Hence
\[
\frac{\langle v^{\pm 1}, w_i\rangle_{bad}}{||w_i||_A}\le
N\epsilon_1+\frac{M}{N}\le \epsilon.
\]

Thus
\[
\left| \frac{\langle v^{\pm 1},
w_i\rangle_{good}}{||w_i||_A}-\frac{\langle v^{\pm 1},
w_i\rangle}{||w_i||_A}\right|\le \epsilon.
\]

Similarly, we say that an occurrence of $v^{\pm 1}$ in $w_i'$ is
\emph{good} if it lies within some $z_j=y_j$ where $y_j$ is good,
and that an occurrence of $v^{\pm 1}$ in $w_i'$ is \emph{bad}
otherwise.

We have
\[
\langle v^{\pm 1}, w_i'\rangle_{bad}\le Nm_{bad}+Mm+Cm\le
N\epsilon_1||w_i||_A+Mm+Cm.
\]
Hence
\[
\frac{\langle v^{\pm 1}, w_i'\rangle_{bad}}{||w_i||_A}\le
N\epsilon_1+\frac{M}{N}+\frac{C}{N}.
\]
Therefore
\[
\frac{\langle v^{\pm 1},
w_i'\rangle_{bad}}{||w_i'||_A}=\frac{\langle v^{\pm 1},
w_i\rangle_{bad}}{||w_i||_A}\frac{||w_i||_A}{||w_i'||_A}\le
N\epsilon_1+\frac{M}{N}+\frac{C}{N}\le \epsilon
\]
since $||w_i||_A\le ||w_i'||_A$.

Hence

\[
\left| \frac{\langle v^{\pm 1},
w_i'\rangle_{good}}{||w_i'||_A}-\frac{\langle v^{\pm 1},
w_i'\rangle}{||w_i'||_A}\right|\le \epsilon.
\]

Note that by construction $\langle v, w_i'\rangle_{good}=\langle v,
w_i\rangle_{good}$.

Hence by $(\dag)$

\begin{gather*}
\frac{\langle v^{\pm 1}, w_i\rangle_{good}}{||w_i||_A}\ge
\frac{\langle v^{\pm 1},
w_i'\rangle_{good}}{||w_i'||_A}=\frac{\langle v^{\pm 1},
w_i\rangle_{good}}{||w_i||_A}\frac{||w_i||_A}{||w_i'||_A}\ge
\frac{\langle v^{\pm 1}, w_i\rangle_{good}}{||w_i||_A}\frac{N}{N+C}
\end{gather*}

Hence by the choice of $N$ we have
\[
\left|  \frac{\langle v^{\pm 1},
w_i\rangle_{good}}{||w_i||_A}-\frac{\langle v^{\pm 1},
w_i'\rangle_{good}}{||w_i'||_A}\right|\le \epsilon.
\]

Therefore
\[
\left|  \frac{\langle v^{\pm 1},
w_i'\rangle}{||w_i'||_A}-\frac{\langle v^{\pm 1},
w_i\rangle}{||w_i||_A}\right|\le 3\epsilon
\]
and hence
\[
\left|  \frac{\langle v^{\pm 1}, w_i'\rangle}{||w_i'||_A}-\langle v,
\mu\rangle\right|\le 4\epsilon.
\]

Since $w_i'$
represents an element conjugate in $F$ to an element of $H$, we have
verified that for any $\epsilon>0$ and any integer $M\ge 1$ condition
$(\ast\ast)$ holds. This implies the statement of the proposition.
\end{proof}

\begin{prop}\label{prop:restr}
Let $H\le F$ be a nonabelian finitely generated subgroup. Let $T$ be
an $\mathbb R$-tree with a very small isometric action of $F$ such
that $H$ acts nontrivially on $T$. Let $T_H$ be the smallest
$H$-invariant subtree.

Let $\mu\in Curr(F)$ be such that $supp(\mu)\subseteq
i_\Lambda(\partial^2 H)$. Then:

\begin{enumerate}

\item We have
\[
\langle T,\mu\rangle\le \langle T_H,\mu|_H\rangle\le n_{H,F}d_{H,F}\
\langle T,\mu\rangle.
\]

\item Moreover, if $H\le F$ is malnormal, then
\[
\langle T,\mu\rangle= \langle T_H,\mu|_H\rangle.
\]
\end{enumerate}
\end{prop}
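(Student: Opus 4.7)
The plan is to approximate $\mu$ by rational currents coming from $H$ via Proposition~\ref{prop:appr}, and then exploit Lemma~\ref{lem:split} to track how the restriction operation interacts with the intersection form. Write $\mu = \lim_{i\to\infty} \lambda_i \eta^F_{h_i}$ with $h_i \in H$, $h_i \ne 1$, and $\lambda_i \ge 0$. Using the identity $\eta^F_{g^k} = k\, \eta^F_g$ whenever $g$ is not a proper power in $F$, one checks that if $h_i = g_i^{e_i}$ with $g_i$ not a proper power in $H$, then $\eta^F_{h_i} = e_i \eta^F_{g_i}$; absorbing $e_i$ into $\lambda_i$, we may therefore assume each $h_i$ is not a proper power in $H$.

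With this normalization, write $h_i = f_i^{d_i}$ with $f_i \in F$ not a proper power and $1 \le d_i \le d_{H,F}$, and enumerate the $H$-conjugacy classes of $[h_i]_F \cap H$ as $[h_{i,1}]_H,\dots,[h_{i,m_i}]_H$ with $1 \le m_i \le n_{H,F}$. By Lemma~\ref{lem:split}(1),
\[
\eta^F_{h_i}\big|_H \;=\; d_i \sum_{j=1}^{m_i} \eta^H_{h_{i,j}}.
\]
Applying the continuous linear map $r_H$ to the approximation of $\mu$ and then using continuity together with $\mathbb R_{\ge 0}$-linearity of the intersection form (Proposition-Definition~\ref{pd:int-form}) on $\overline{cv}(H) \times Curr(H)$, we deduce
\[
\langle T_H, \mu|_H\rangle \;=\; \lim_{i\to\infty} \lambda_i d_i \sum_{j=1}^{m_i} ||h_{i,j}||_{T_H}, \qquad \langle T, \mu\rangle \;=\; \lim_{i\to\infty} \lambda_i ||h_i||_T.
\]
Since each $h_{i,j}$ is $F$-conjugate to $h_i$ we have $||h_{i,j}||_T = ||h_i||_T$; and since $T_H$ is the minimal $H$-invariant subtree of $T$ it must contain the axis of every hyperbolic element of $H$, so $||h_{i,j}||_{T_H} = ||h_{i,j}||_T$. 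Consequently the first limit simplifies to $\lim_i \lambda_i d_i m_i ||h_i||_T$.

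The termwise sandwich
\[
\lambda_i ||h_i||_T \;\le\; \lambda_i d_i m_i ||h_i||_T \;\le\; n_{H,F}\, d_{H,F}\, \lambda_i ||h_i||_T,
\]
valid for all $i$, passes to the limit and yields part~(1). For part~(2), malnormality of $H$ forces $d_i = m_i = 1$ for every $i$ by Lemma~\ref{lem:split}(2), so the two limits coincide and equality holds. The only genuinely delicate points in this argument are the reduction of $h_i$ to a non-power in $H$ and the identity $||h||_{T_H} = ||h||_T$ for $h \in H$; both are standard but warrant a brief explicit justification. Everything else reduces to a direct combination of continuity of $r_H$, continuity of the intersection form, and the conjugacy-class splitting provided by Lemma~\ref{lem:split}.
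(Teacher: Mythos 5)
Your argument reproduces the paper's proof step for step: approximate $\mu$ by $\lambda_i \eta^F_{h_i}$ with $h_i\in H$ via Proposition~\ref{prop:appr}, normalize so $h_i$ is not a proper power in $H$, apply Lemma~\ref{lem:split} to compute $\eta^F_{h_i}\big|_H$, use continuity of $r_H$ and the intersection form to express both sides as limits, and then sandwich using $1\le d_im_i\le n_{H,F}d_{H,F}$. The only difference is that you spell out the justification for $||h_{i,j}||_{T_H}=||h_i||_T$ and the reduction to non-powers, which the paper takes for granted; the substance and route are the same.
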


\begin{proof}

By Proposition~\ref{prop:appr}, there exist a sequence $h_i\in H$
and a sequence $\lambda_i\ge 0$ such that
\[
\mu=\lim_{i\to\infty} \lambda_i \eta^F_{h_i}.
\]
We may assume that each $h_i$ is nontrivial and is not a proper
power in $H$. For each $i$ let $1\le m_i\le n_{H,F}$ and
$h_{i,1},\dots, h_{i,m_i}\in H$ be such that
\[
[h_i]_F\cap H=\cup_{j=1}^{m_i} [h_{i,j}]_H.
\]
Let $1\le d_i\le d_{H,F}$ be such that in $F$ we have
$h_i=f_i^{d_i}$ where $f_i$ is not a proper power in $F$.

Then by Lemma~\ref{lem:split} $\eta_{h_i}^F|_H=d_i\sum_{j=1}^{m_i}
\eta_{h_{i,j}}^H$.  Hence
\[
\mu|_H=\lim_{i\to\infty} \lambda_i d_i\sum_{j=1}^{m_i}
\eta_{h_{i,j}}^H.
\]
Note that $||h_i||_T=||h_{i,j}||_{T_H}$.

We have
\[
\langle T,\mu\rangle=\lim_{i\to\infty} \lambda_i ||h_i||_T.
\]
Also,

\[
\langle T_H,\mu|_H\rangle=\lim_{i\to\infty} \lambda_i
d_i\sum_{j=1}^{m_i}||h_{i,j}||_{T_H}=\lim_{i\to\infty} \lambda_i
d_im_i||h_i||_{T}\tag{$\ast$}
\]
Since $1\le d_im_i\le n_{H,F}d_{H,F}$, $(\ast)$ implies that
\[
\langle T,\mu\rangle\le \langle T_H,\mu|_H\rangle\le n_{H,F}d_{H,F}\
\langle T,\mu\rangle.
\]
Moreover, if $H\le F$ is malnormal then $n_{H,F}=d_{H,F}=1$ and
hence

\[
\langle T,\mu\rangle= \langle T_H,\mu|_H\rangle,
\]
as required.

\end{proof}

\section{The general case}

In this section we will prove that for an arbitrary very small
action of $F$ on an $\mathbb R$-tree $T$ the condition $\langle
T,\mu\rangle=0$ implies that $supp(\mu)\subseteq L^2(T)$.

We need the following basic fact about the structure of a general very
small action (see Remark~2.1 in \cite{CHL2}):
\begin{propdfn}\label{pd:sim}\cite{CHL2}
Let $F$ be a nonabelian finitely generated group with a very small
minimal isometric action on an $\mathbb R$-tree $T$. Suppose that
this action is neither discrete nor has dense orbits.

Then there exists an $F$-invariant collection $\mathcal V$ of
disjoint closed subtrees of $T$ with the following properties:

\begin{enumerate}
\item The family $\mathcal V$ consists of a finite number of
distinct $F$-orbits of closed subtrees of $T$.
\item For every tree $T'\in \mathcal V$ the set-wise stabilizer
$H=Stab_F(T')$ of $T'$ in $F$ is a finitely generated subgroup of
$F$. Moreover, either $H$ acts on $T'$ with dense orbits or $T'$ is
a single point that is fixed by $H$.
\item Let $T_s$ be an $\mathbb R$-tree obtained from $T$ by collapsing all subtrees from
$\mathcal T$ to points and let $q:T\to T_s$ be the corresponding
$F$-equivariant quotient map. Then the action of $F$ on $T$ factors
through to an isometric action of $F$ on $T_s$ that is very small,
minimal and discrete.
\item The quotient $\mathcal Y=T_s/F$ is a finite reduced graph of
groups where the vertex groups correspond to setwise $F$-stabilizers
of trees from $\mathcal V$ and where the edge-lengths correspond to
distances in $T$ between $F$-orbits of the trees from $\mathcal V$
representing the end-points of the edge in question.
\end{enumerate}

\end{propdfn}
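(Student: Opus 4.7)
The plan is to construct $\mathcal V$ via a Levitt-style decomposition of the mixed action, which isolates the ``dense parts'' of $T$ as subtrees collapsed by a canonical $F$-equivariant quotient $q:T \to T_s$. Concretely, I would define an equivalence relation $\sim$ on $T$ by declaring $p \sim q$ if and only if no non-degenerate sub-arc $[p',q'] \subseteq [p,q]$ is isolated in its $F$-orbit, i.e., every non-degenerate sub-arc of $[p,q]$ is met in its interior by some $F$-translate of itself. This relation is $F$-invariant by construction, and a direct closure argument (using that arc stabilizers are maximal cyclic by very smallness, so the family of non-isolated arcs is closed under nested limits) shows that each equivalence class is a closed subtree of $T$. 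Let $\mathcal V$ be the collection of equivalence classes that are either non-degenerate or consist of a single point fixed by a non-trivial element of $F$ which is an endpoint of a non-trivial simplicial arc.

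The quotient $T_s := T/{\sim}$, equipped with the induced pseudo-metric (which separates equivalence classes because distinct classes are joined by simplicial arcs), inherits an isometric $F$-action. By the defining property of $\sim$, this action is discrete; minimality of the $F$-action on $T_s$ is inherited from $T$; and very smallness is preserved because arc stabilizers in $T_s$ come from arc stabilizers of simplicial sub-arcs of $T$ and are thus maximal cyclic, while tripod stabilizers of $T_s$ lift to tripod stabilizers of $T$ and hence are trivial. Standard Bass-Serre theory then identifies the discrete minimal very small $F$-action on $T_s$ with the Bass-Serre tree of a finite reduced graph of groups $\mathcal Y = T_s/F$, with edge lengths read off from distances in $T$ between the collapsed subtrees, giving parts (3) and (4).

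The main obstacle will be proving (2): that each setwise stabilizer $H = Stab_F(T')$, $T' \in \mathcal V$, is finitely generated and that when $T'$ is non-degenerate the induced $H$-action on $T'$ has dense orbits. The density assertion is relatively direct from the construction: if some $H$-orbit in $T'$ failed to be dense, one could locate an arc in $T'$ avoiding every $H$-translate of some basepoint, and refine this to a simplicial sub-arc in $T$, contradicting the definition of $\sim$. Finite generation of $H$, however, is the deep point, as is the finiteness assertion in (1): both require the Gaboriau-Levitt rank theorem for very small $F_N$-actions on $\mathbb R$-trees, which bounds the combined ``geometric rank'' of the dense components and the simplicial part by $N = \mathrm{rank}(F)$. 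That uniform bound simultaneously forces the number of $F$-orbits in $\mathcal V$ to be finite and each vertex group of $\mathcal Y$ to be finitely generated, completing the proof.
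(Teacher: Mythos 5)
The paper does not actually prove Proposition--Definition~10.1: it is imported verbatim from Remark~2.1 of Coulbois--Hilion--Lustig \cite{CHL2}, which in turn rests on Levitt's graph-of-actions decomposition for small $\mathbb R$-tree actions together with the finiteness results of Gaboriau--Levitt \cite{GJLL}. So there is no internal argument in the present paper to compare against; what you have sketched is precisely the standard Levitt-style construction that underlies the cited statement, and your identification of the Gaboriau--Levitt rank bound as the engine behind the finiteness of orbits in~(1) and the finite generation of the vertex stabilizers in~(2) is the right one.

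One substantive correction is needed, though, in the definition of your equivalence relation $\sim$. You declare $[p',q']$ to be ``isolated'' unless some $F$-translate of it meets its interior. In a very small action a nondegenerate arc can carry a nontrivial (maximal cyclic) stabilizer $\langle g\rangle$ that fixes the arc \emph{pointwise}; then $g[p',q']=[p',q']$ certainly meets $(p',q')$, yet the arc is still isolated in every reasonable sense. Under a literal reading of your criterion, every subarc of such a pointwise-fixed simplicial edge would be declared non-isolated, all of its points would become $\sim$-equivalent, and the whole edge would collapse to a point of $T_s$ --- erasing exactly the cyclic edge groups that are supposed to appear in $\mathcal Y$. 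This is not a vacuous worry: very small simplicial splittings with $\mathbb Z$ edge stabilizers are ubiquitous, and the mixed case of this proposition can contain them. The fix is to demand a translate $g[p',q']$ that meets $(p',q')$ \emph{and differs from} $[p',q']$, or, equivalently, to use one of the standard formulations (e.g.\ that the $F$-orbit of $[p',q']$, viewed as a family of distinct arcs, is not locally finite, or that some nondegenerate subarc contains a point whose orbit accumulates in that subarc). With that adjustment your sketch --- closed convex classes, a discrete minimal very small quotient $T_s$, Bass--Serre theory giving $\mathcal Y$, density of vertex-group orbits from the definition of $\sim$, and Gaboriau--Levitt for finiteness and finite generation --- follows the same route as the source of the cited result.
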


\begin{lem}
Let $q:T\to T_s$ be as in Proposition-Definition~\ref{pd:sim}. Then
the following hold:
\begin{enumerate}
\item For every $f\in F$ and every $p\in T$ we have $d_{T_s}(q(p), f q(p))\le
d_T(p,fp)$.

\item For every $f\in F$ we have $||f||_{T_s}\le ||f||_{T}$.

\item For every $\mu\in Curr(F)$ we have $\langle T_s, \mu\rangle
\le \langle T, \mu\rangle$.

\item We have $L^2(T)\subseteq L^2 (T_s)$.
\end{enumerate}
\end{lem}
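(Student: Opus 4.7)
The plan is to prove the four parts in order, since each is a straightforward consequence of the ones before it, together with the basic fact that the collapse map $q: T \to T_s$ is $F$-equivariant and distance non-increasing.

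First, for part (1), I would observe that $q$ collapses each subtree in the $F$-invariant family $\mathcal V$ to a single point, and is the identity (up to rescaling along edges) outside those subtrees. In particular, for any two points $p, p' \in T$, the image $q([p, p'])$ is a connected subset of $T_s$ containing $q(p)$ and $q(p')$; since $T_s$ is an $\mathbb R$-tree, it must contain the segment $[q(p), q(p')]$, and a direct length estimate shows $d_{T_s}(q(p), q(p')) \le d_T(p, p')$. Applying this with $p' = fp$ and using the $F$-equivariance $q(fp) = fq(p)$ gives (1).

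For part (2), I would simply take the infimum over $p \in T$ of both sides of (1). Since $q$ is surjective (by construction $T_s = T/\!\!\sim$ with the quotient metric), as $p$ ranges over $T$ the image $q(p)$ ranges over all of $T_s$, so
\[
\|f\|_{T_s} = \inf_{y \in T_s} d_{T_s}(y, fy) \le \inf_{p \in T} d_{T_s}(q(p), fq(p)) \le \inf_{p \in T} d_T(p, fp) = \|f\|_T.
\]

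For part (3), approximate $\mu$ by rational currents $\mu = \lim_{i \to \infty} \lambda_i \eta_{g_i}$, with $\lambda_i \ge 0$ and $g_i \in F$. By $(\ddag)$ in the introduction and the continuity of the intersection form (Proposition-Definition~\ref{pd:int-form}), we have $\langle T, \mu\rangle = \lim_i \lambda_i \|g_i\|_T$ and likewise $\langle T_s, \mu\rangle = \lim_i \lambda_i \|g_i\|_{T_s}$. Since part (2) gives $\|g_i\|_{T_s} \le \|g_i\|_T$ for each $i$, passing to the limit yields $\langle T_s, \mu\rangle \le \langle T, \mu\rangle$.

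Finally, for part (4), I would fix a simplicial chart $\alpha: F \to \pi_1(\Gamma, x)$ and use Definition~\ref{defn:dual}. Let $(\xi_1, \xi_2) \in L^2(T)$, and let $v$ be any reduced edge-path in $\Gamma$ labelling a subsegment of the bi-infinite geodesic between $\xi_1$ and $\xi_2$ in $\widetilde\Gamma$, and let $\epsilon > 0$. By definition, there is a reduced and cyclically reduced closed path $w$ in $\Gamma$ containing $v$ as a subpath with $\|w\|_T \le \epsilon$. By part (2), $\|w\|_{T_s} \le \|w\|_T \le \epsilon$, so the same $w$ witnesses the defining condition for $(\xi_1, \xi_2) \in L^2(T_s)$. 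Hence $L^2(T) \subseteq L^2(T_s)$. No step here is a genuine obstacle; the only point requiring modest care is verifying that the collapse map is $1$-Lipschitz, which is immediate from the construction of $T_s$ as the quotient metric space obtained by collapsing the trees in $\mathcal V$.
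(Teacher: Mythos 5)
Your proof is correct and follows essentially the same line of reasoning as the paper: (1) from the fact that the collapse map $q$ is $F$-equivariant and $1$-Lipschitz, (2) by taking the infimum over $p$, (3) by approximating $\mu$ with rational currents and invoking continuity of the intersection form, and (4) by noting that the cyclically reduced words $w$ with small $\|w\|_T$ witnessing membership in $L^2(T)$ also have small $\|w\|_{T_s}$ by part (2). The paper dispatches (1) and (2) with "follows from the definition" and "obviously implies," and your added detail simply makes explicit what is being taken for granted there.
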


\begin{proof}
Part (1) follows from the definition of $q:T\to T_s$. Part (1)
obviously implies part (2). It is also easy to see that part (2)
implies parts (3) and (4).

Indeed, let $\mu\in Curr(F)$. Write $\mu$ as $\mu=\lim_{i\to\infty}
\lambda_i \eta_{g_i}$ for some $g_i\in F, \lambda_i\ge 0$. Then
\[
\langle T_s, \mu\rangle =\lim_{i\to\infty} \lambda_i
||g_i||_{T_s}\le \lim_{i\to\infty} \lambda_i ||g_i||_{T} =\langle T,
\mu\rangle,
\]
so that part (3) holds. To see that part (4) holds, fix a free basis
$A$ of $F$. Suppose $v\in F(A)$ is such that $v\in [L^2(T)]_A$. Then for
any $\epsilon>0$ there is a cyclically reduced $w$ with $||w||_T\le
\epsilon$ and with $v$ being a subword of $w$. Then $||w||_{T_s}\le
||w||_{T}$. Since $\epsilon>0$ was arbitrary, it follows that
$v\in [L^2 (T_s)]_A$, as required.
\end{proof}

\begin{conv}
For the remainder of this section and the subsequent section, let
$F$, $T$, $T_s$ and $Y$ be as in
Proposition-Definition~\ref{pd:sim}. For each vertex $x\in VY$ let
$G_x\le F$ be a subgroup corresponding to the vertex group of $x$ of
$Y$. We will denote by $V'Y$ the set of all those $v\in VY$ such
that the vertex group of $x$ in $\mathcal Y$ is not cyclic.
\end{conv}

\begin{prop}\label{prop:split}
Let $\mu\in Curr(F)$ be such that \[supp(\mu)\subseteq  L^2(T_s)=
\underset{x\in V'Y}{\bigcup} i_\Lambda(\partial^2 G_x).\] Then there
exist currents $\mu_x\in Curr(F), x\in V'Y$, such that
$\mu=\sum_{x\in V'Y} \mu_x$ and such that $supp(\mu_x)\subseteq
i_\Lambda(\partial^2 G_x)$ for every $x\in V'Y$.
\end{prop}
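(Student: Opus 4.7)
The plan is to decompose $\mu$ by partitioning its support among the finitely many closed, $F$-invariant and flip-invariant sets $C_x := i_\Lambda(\partial^2 G_x)$, $x \in V'Y$, and then taking the restriction of $\mu$ to each piece of the partition. The hypothesis $supp(\mu)\subseteq \bigcup_{x\in V'Y} C_x$ already places the entire mass of $\mu$ inside a finite union of closed $F$-invariant sets, so the argument will be essentially measure-theoretic and no dynamical input is needed beyond what is already collected.

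Concretely, I would fix an arbitrary enumeration $V'Y = \{x_1, \ldots, x_k\}$ and define, for $i = 1, \ldots, k$,
\[
D_i \ :=\ C_{x_i} \setminus \bigl(C_{x_1} \cup \cdots \cup C_{x_{i-1}}\bigr).
\]
Each $D_i$ is a Borel subset of $\partial^2 F$ (a difference of closed sets), and since every $C_{x_j}$ is $F$-invariant and flip-invariant (by the earlier Proposition-Definition on $i_\Lambda$ and the closure statement of Lemma 3 in Section~\ref{sect:laminations}), so is each $D_i$. The $D_i$ are pairwise disjoint and their union equals $\bigcup_{x \in V'Y} C_x$, which contains $supp(\mu)$. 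I would then set
\[
\mu_{x_i}(S) \ :=\ \mu(S \cap D_i) \qquad \text{for every Borel } S \subseteq \partial^2 F,
\]
i.e.\ $\mu_{x_i}$ is the Borel restriction of $\mu$ to $D_i$.

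The verification is then routine. Each $\mu_{x_i}$ is a positive Radon measure because $\mu$ is and $D_i$ is Borel. For $F$-invariance and flip-invariance, the standard manipulation
\[
\mu_{x_i}(fS) \ =\ \mu(fS \cap D_i) \ =\ \mu\bigl(f(S \cap f^{-1}D_i)\bigr) \ =\ \mu\bigl(f(S \cap D_i)\bigr) \ =\ \mu(S \cap D_i) \ =\ \mu_{x_i}(S)
\]
works, using the $F$-invariance of $D_i$ and $\mu$, and similarly for $\sigma_F$. Hence $\mu_{x_i} \in Curr(F)$. The identity $\mu = \sum_{i=1}^{k} \mu_{x_i}$ follows because the $D_i$ partition a Borel set containing $supp(\mu)$, so $\mu$ and $\sum_i \mu_{x_i}$ agree on every Borel subset of $\partial^2 F$. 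Finally, since $C_{x_i}$ is closed and $\mu_{x_i}$ vanishes on the open set $\partial^2 F \setminus C_{x_i}$ (whose intersection with $D_i$ is empty), one concludes $supp(\mu_{x_i}) \subseteq C_{x_i} = i_\Lambda(\partial^2 G_{x_i})$, as required.

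There is no real obstacle here; the only subtlety is ensuring that the pieces $D_i$ are simultaneously $F$-invariant and flip-invariant, which is automatic from the construction and is precisely what makes the Borel restrictions $\mu|_{D_i}$ again geodesic currents. Note also that this decomposition is not canonical (it depends on the chosen enumeration of $V'Y$), but the statement does not assert uniqueness.
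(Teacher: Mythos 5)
Your proof is correct, and it takes a genuinely different and in fact more elementary route than the paper's. The paper attacks the problem by first analyzing the pairwise intersections $i_\Lambda(\partial^2 G_{x_1}) \cap i_\Lambda(\partial^2 G_{x_2})$: it shows that any point in such an intersection must be an $F$-translate of $(g_i^{\pm\infty},g_i^{\mp\infty})$ for one of finitely many edge-group generators $g_i$, so the overlap is a countable $F$-invariant set. It then splits $\mu = \nu + \sum_i \lambda_i \eta_{g_i}$, where $\nu$ has no atoms on that countable set, defines $\nu_x(S) := \nu\bigl(S \cap i_\Lambda(\partial^2 G_x)\bigr)$ (which is additive precisely because $\nu$ gives the overlaps measure zero), and finally redistributes the rational summands $\lambda_i\eta_{g_i}$ by assigning each to some $\mu_x$ with $supp(\eta_{g_i}) \subseteq i_\Lambda(\partial^2 G_x)$. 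You avoid all of this by first disjointifying the cover, $D_i := C_{x_i} \setminus (C_{x_1}\cup\dots\cup C_{x_{i-1}})$, so that no double-counting can occur and the Borel restrictions $\mu(\,\cdot\, \cap D_i)$ sum directly to $\mu$. Your observation that each $D_i$ inherits $F$-invariance and flip-invariance from the $C_{x_j}$ is exactly the point that makes the restrictions again geodesic currents, and your support estimate $supp(\mu_{x_i}) \subseteq C_{x_i}$ via $D_i \subseteq C_{x_i}$ with $C_{x_i}$ closed is correct. The paper's longer route does yield extra structural information about the overlaps (which is not used in the rest of the argument), whereas yours is shorter and requires only the closedness and invariance of the sets $C_x$, already established in Section~\ref{sect:laminations}. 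Both you and the paper correctly note the decomposition is non-canonical.
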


\begin{proof}

There is a finite collection of nontrivial elements
$g_1,\dots, g_p\in F$ such that if for some $x_1,x_2\in V'Y$, $x_1\ne
x_2$ and some $f_1,f_2\in G$ we have $f_1G_{x_1}f_1^{-1}\cap f_2
G_{x_2}f_2^{-1}\ne 1$, then $f_1G_{x_1}f_1^{-1}\cap f_2 G_{x_2}f_2^{-1}$
is a cyclic group generated by a
conjugate of the power of some $g_i$ in $F$. Indeed, suppose that $H=f_1G_{x_1}f_1^{-1}\cap f_2 G_{x_2}f_2^{-1}\ne 1$. Then $H$
fixes a segment in $T_s$ joining a vertex $v_1$ of $T_s$ projecting to $x_1$
with a vertex $v_2$ of $T_s$ projecting to $x_2$. Hence $H$ fixes an
edge of $T_s$ adjacent to $v_1$ . This implies that $H$ is conjugate to a subgroup
of the edge-group for an edge adjacent to $x_1$ in $\mathcal Y$.
Thus we can take $g_1,\dots, g_p\in F$ to be the generators of the
nontrivial edge-groups of $\mathcal Y$.

One can then show that if $x_1,x_2\in V'Y$, $x_1\ne
x_2$ and a point $(\xi,\zeta)\in \partial^2 F$ belongs to
$i_\Lambda(\partial^2 G_{x_1})\cap  i_\Lambda(\partial^2 G_{x_2})$
then for some $1\le i\le p$ the point $(\xi,\zeta)$ is an
$F$-translate of $(g_i^{-\infty}, g_i^{\infty})$ or of $(g_i^{\infty},
g_i^{-\infty})$. To see this, choose a free basis $A$ of $F$ and
suppose that $(\xi,\zeta)\in i_\Lambda(\partial^2 G_{x_1})\cap
i_\Lambda(\partial^2 G_{x_2})$. Then the bi-infinite geodesic joining
$\xi$ to $\zeta$ in the Cayley graph $X(F,A)$ is labelled by a
bi-infinite freely reduced word $w$ that can be read along some
bi-infinite paths in cores of the Stallings subgroup graphs
$\Gamma_{G_{x_1}}$ and $\Gamma_{G_{x_2}}$. Therefore (see \cite{KM})
$w$ can be read along a bi-infinite path in a connected component of
the ``Stallings product graph'' $\Gamma_{G_{x_1}}\times
\Gamma_{G_{x_2}}$ (also known as the push-out of $\Gamma_{G_{x_1}}$ and $\Gamma_{G_{x_2}}$). Hence (again see \cite{KM}) the bi-infinite word $w$ is readable along some path in the core of the Stallings subgroup graph for a subgroup of the form $f_1G_{x_1}f_1^{-1}\cap f_2 G_{x_2}f_2^{-1}$ for some $f_1,f_2\in F$. Therefore $w$ is an infinite power of some $g_i$, as required.

For $1\le i\le p$ put $\lambda_i=\mu\left(\{ (g_i^{-\infty}, g_i^{\infty})
\}\right)=\mu\left(\{ (g_i^{\infty}, g_i^{-\infty})\}\right)$. Thus $\lambda_i\ge 0$.

Hence we can represent $\mu$ as
\[
\mu=\nu +\sum_{i=1}^p \lambda_i \eta_{g_i}
\]
where $\nu\in Curr(F)$ is a geodesic current,  and where for every
$1\le i\le p$ the current $\nu$ has no atom at $(g_i^{-\infty},
g_i^{\infty})$, and hence it has no atom at every $F$-translate of
$(g_i^{-\infty}, g_i^{\infty})$ or of $(g_i^{\infty},
g_i^{-\infty})$. Here by saying that a measure has no atom at a
particular point we mean that the measure of a singleton consisting of
that point is equal to zero.

For the current $\nu$ the statement of the proposition is
obvious. Indeed, for $S\subseteq \partial ^2 F$ and for $x\in V'Y$ put
\[
\nu_x(S)=\nu\big(S\cap i_\Lambda(\partial^2 G_x)\big).
\]
It is not hard to check that for every $x\in V'Y$ we have
$\nu_x\in Curr(F)$ and $supp[\nu_x]\subseteq  i_\Lambda(\partial^2 G_x)$, and that
$\nu=\sum_{x\in V'Y}\nu_x$.

Note that for every $1\le i\le p$ there is some (not necessarily unique) $x(i)=x\in V'Y$ such that
$supp(\eta_{g_i})\subseteq i_\Lambda(\partial^2 G_x)$.

For every $x\in V'Y$ put $\mu_x$ to be the sum of $\nu_x$ and all
those $\lambda_i\eta_{g_i}$ for which $x(i)=x$. Then
$\mu=\sum_{x\in V'Y} \mu_x$ and  $supp(\mu_x)\subseteq
i_\Lambda(\partial^2 G_x)$ for every $x\in V'Y$, as required.
\end{proof}
Note that the decmposition $\mu=\sum_{x\in V'Y} \mu_x$ in
Proposition~\ref{prop:split} is, in general, non-canonical.

\begin{prop}\label{prop:restr1}
Let $H\le F=F(A)$ be a finitely generated subgroup and let $\mu\in
Curr_H(F)$.

Then $supp(\mu)\subseteq i_\Lambda (supp (\mu|_H))$.
\end{prop}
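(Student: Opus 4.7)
The plan is to verify the inclusion at the level of laminary languages with respect to the free basis $A$. By the first proposition of Section~3 ($L \subseteq L'$ iff $L_A \subseteq L'_A$), it will suffice to show that every $v \in F(A)$ with $\langle v, \mu \rangle_A > 0$ belongs to $i_\Lambda(supp(\mu|_H))_A$.

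I would first fix such a $v$. Since $\mu \in Curr_H(F)$, Lemma~\ref{lem:sgp} provides a reduced edge-path in $\Delta_H = Core(\Gamma_H)$ labelled by $v$; I lift this to an edge-path $\gamma$ in $X_H = \widetilde\Delta_H \subseteq X(F,A)$, so that $\mu(Cyl_{X(F,A)}(\gamma)) = \langle v, \mu\rangle_A > 0$. Because $\bigcup_{f\in F} f\partial^2 H$ is closed and contains $supp(\mu)$, its complement is a $\mu$-null open set, so $\mu$ concentrates on the countable union $\bigcup_{fH \in F/H} f\partial^2 H$ (the set $f\partial^2 H$ depends only on the coset $fH$, and there are countably many cosets). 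Countable subadditivity then produces some $f \in F$ with $\mu(Cyl_{X(F,A)}(\gamma) \cap f\partial^2 H) > 0$. Using the $F$-invariance of $\mu$ together with the fact that the left $F$-action on $X(F,A)$ preserves edge-labels, this equals $\mu(Cyl_{X(F,A)}(f^{-1}\gamma) \cap \partial^2 H)$, where $f^{-1}\gamma$ carries the same label $v$ as $\gamma$. The nonemptiness of this intersection forces $f^{-1}\gamma \subseteq X_H$, since the geodesic in $X(F,A)$ between any pair in $\partial^2 H$ lies in the convex hull $X_H$; consequently the intersection coincides with the cylinder $Cyl_{X_H}(f^{-1}\gamma)$ for the chart $\alpha_{H,A}\colon H \to \pi_1(\Gamma_H, y)$, and so $\mu|_H(Cyl_{X_H}(f^{-1}\gamma)) > 0$.

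From this positivity one extracts a pair $(\eta_1, \eta_2) \in supp(\mu|_H)$ whose bi-infinite $X_H$-geodesic passes through $f^{-1}\gamma$. Projecting this geodesic to $\Delta_H$ yields a bi-infinite reduced edge-path in $\Delta_H$ whose $F(A)$-label contains the label $v$ of $f^{-1}\gamma$ as a subword, which by Lemma~\ref{lem:H} is precisely the assertion that $v \in i_\Lambda(supp(\mu|_H))_A$. The main subtle step is the $F$-equivariance manipulation that transports the witness cylinder from a conjugate translate $fX_H$ back into $X_H$ itself, where it becomes a genuine cylinder for the chart on $H$; no malnormality assumption on $H$ is required, since countable subadditivity is enough to isolate a single coset $fH$ before applying $F$-invariance.
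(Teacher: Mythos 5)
Your proposal is correct and follows essentially the same route as the paper's own proof: you fix a witnessing path labelled by $v$ in $X_H$, use the fact that $\mu$ concentrates on the countable union of translates $f\partial^2 H$ together with countable (sub)additivity to locate a coset with positive measure, transport the cylinder back to $\partial^2 H$ by $F$-invariance, and conclude that $\mu|_H$ is positive on the corresponding $X_H$-cylinder. The only cosmetic difference is that you phrase the final step via Lemma~\ref{lem:H} and the extraction of a support pair, whereas the paper concludes directly in terms of the laminary language $supp_\alpha(\mu|_H)$, but these are equivalent.
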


\begin{proof}

Let $\Gamma_H$ be the Stallings subgroup graph of $H$ with respect
to $A$ and let $\Delta_H=Core(\Gamma_H)$. Note that the minimal
$H$-invariant subtree $X_H\subseteq X(F,A)$ is a copy of $\widetilde
{\Delta_H}$ in $X(F,A)$.

By conjugating $H$ if necessary, without loss of generality we may
assume that in fact $\Delta_H=\Gamma_H$, so that the base-vertex
$y\in V\Gamma_H$ is a vertex of $\Delta_H$. Then $(\Gamma_H,y)$
provides a canonical isomorphism (simplicial chart) $\alpha: H\to
\pi_1(\Gamma_H,y)$. This isomorphism in turn provides a canonical
$H$-equivariant identification of $\partial H$ with $\partial
\widetilde{\Gamma_H}=\partial X_H$ and a corresponding
$H$-equivariant identification of $\partial^2 H$ with $\partial^2
X_H$. From this point on we will assume that these identifications
are made without additional comment.

Recall that the lamination $L:=i_\Lambda(supp(\mu|_H))\in
\Lambda^2(F)$ has the form

\[
L=\overline{\cup_{f\in F} f\ supp(\mu|_H)}.
\]

Let $v\in F(A)$ be such that $v\in supp_A(\mu)$, so that $\langle v,
\mu\rangle_A>0$. We need to show that $v\in L_A$.

Fix a segment $[x,y]$ in $X_H$ labelled by $v$.

Since $supp(\mu)\subseteq i_\Lambda(\partial^2 H)$, we know that
\begin{gather*}
\langle v,\mu\rangle=\mu(Cyl_X([x,y]))=\mu\left( Cyl_X([x,y])\cap
\big( \cup_{f\in F} f\partial^2H \big) \right)=\\
=\bigcup_{f\in F} \left(Cyl_X([x,y])\cap f\partial^2 H\right)>0.
\end{gather*}

Since $\mu$ is countably additive, there is some $f\in F$ such that
\[
\mu\left( Cyl_X([x,y])\cap
f\partial^2H\right)=\mu\left(f^{-1}Cyl_X([x,y])\cap
\partial^2H \right)>0.
\]
It is easy to see that if $[x,y]$ is not contained in $fX_H$ then
$Cyl_X([x,y])$ and $f\partial^2H$ are disjoint. Hence we have that
$[x,y]\subseteq fX_H$ and that $[x,y]$ is labelled by $v$ in
$X=X(F,H)$. Thus there is a reduced path $p$ in $\Delta_H$ labelled
by $v$ such that $[x',y']:=f^{-1}[x,y]\subseteq X_H=\widetilde
\Delta_H$ is a lift of $p$ in $\widetilde \Delta_H$. Note that
$f^{-1}Cyl_X([x,y])=Cyl_X([x',y'])$ so that $\mu(Cyl_X([x',y'])\cap
\partial^2 H)>0$. It is easy to see that
\[
Cyl_X([x',y'])\cap
\partial^2H=Cyl_{X_H}([x',y'])\subseteq \partial^2H.
\]
Therefore by definition of $\mu|_H$ we have:
\[
\mu|_H(Cyl_{X_H}([x',y']))=\mu(Cyl_{X_H}([x',y']))=\mu(Cyl_X([x',y'])\cap
\partial^2H)>0.
\]

Thus $\langle p, \mu|_H\rangle_{\alpha}>0$ and hence $p\in
supp_{\alpha}(\mu|_H)$. Since $v\in F(A)$ is the label of $p$, it
follows that $v\in L_A$ where $L=i_\Lambda(supp\ \mu|_H)$, as
required.
\end{proof}

\begin{prop}\label{prop:comp}
Let $x\in V'Y$ and let $\mu\in Curr(F)$ be such that
$supp(\mu)\subseteq i_\Lambda(\partial^2 G_x)$ and $\langle T,
\mu\rangle=0$. Then $supp(\mu)\subseteq L^2(T)$.
\end{prop}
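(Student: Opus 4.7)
The plan is to split into two cases according to Proposition-Definition~\ref{pd:sim}(2): either the minimal $G_x$-invariant subtree $T_{G_x}$ of $T$ is a single point fixed by $G_x$, or $G_x$ acts on $T_{G_x}$ with dense orbits.

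In the degenerate case where $G_x$ fixes a point of $T$, every element of $G_x$ has translation length zero in $T$. Given any $(\xi_1,\xi_2)\in\partial^2 G_x$ and any reduced edge-path $v$ labelling a subsegment of the bi-infinite $X(F,A)$-geodesic from $\xi_1$ to $\xi_2$, that segment lies in $X_{G_x}=\widetilde\Delta_{G_x}$. I extend $v$ to a longer path $u_1 v u_2$ in $\Delta_{G_x}$ with $|u_1|_A,|u_2|_A$ exceeding the diameter of $\Delta_{G_x}$, close it up to a reduced loop in $\Delta_{G_x}$, and read off the $F(A)$-label. The resulting word represents an element $h$ in a conjugate of $G_x$, so $||h||_T=0$; the padding prevents cyclic reduction from reaching $v$, producing a cyclically reduced word in $F(A)$ containing $v$ with translation length zero in $T$. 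Thus $\partial^2 G_x\subseteq L^2(T)$, and by closedness and $F$-invariance $i_\Lambda(\partial^2 G_x)\subseteq L^2(T)$, whence $supp(\mu)\subseteq L^2(T)$.

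In the main case where $G_x$ acts with dense orbits on $T_{G_x}$, I first pass to $G_x$. Proposition~\ref{prop:restr}(1) applied to $G_x\le F$ yields
\[
0\le \langle T_{G_x},\mu|_{G_x}\rangle\le n_{G_x,F}\, d_{G_x,F}\, \langle T,\mu\rangle=0,
\]
so $\langle T_{G_x},\mu|_{G_x}\rangle=0$. The dense-orbit case, Proposition~\ref{prop:dense1}, now gives $supp(\mu|_{G_x})\subseteq L^2(T_{G_x})$ (viewed as a lamination on $G_x$). Proposition~\ref{prop:restr1} then yields $supp(\mu)\subseteq i_\Lambda(supp(\mu|_{G_x}))\subseteq i_\Lambda(L^2(T_{G_x}))$.

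The remaining and principal obstacle is to verify $i_\Lambda(L^2(T_{G_x}))\subseteq L^2(T)$. The key observation is $||h||_T=||h||_{T_{G_x}}$ for every $h\in G_x$, since $T_{G_x}$ is the minimal $G_x$-invariant subtree of $T$. Repeating the padding-and-closing argument from Case~1---now invoking the definition of $L^2(T_{G_x})$ to produce, for each $\epsilon>0$, a reduced closed path in $\Gamma_{G_x}$ through the padded subword whose represented element $h\in G_x$ satisfies $||h||_{T_{G_x}}\le\epsilon$---one obtains a cyclically reduced word in $F(A)$ containing $v$ with translation length $\le\epsilon$ in $T$. The main subtlety is translating between reduced closed paths in the Stallings graph $\Gamma_{G_x}$ (where $L^2(T_{G_x})$ is defined intrinsically for $G_x$) and cyclically reduced words in $F(A)$ (where $L^2(T)$ is defined for $F$); this is handled by choosing the padding larger than the diameter of $\Delta_{G_x}$ so that neither the bounded cancellation across the closure nor the cyclic reduction in $F(A)$ can reach $v$.
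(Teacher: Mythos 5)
Your proposal is correct and takes essentially the same route as the paper: split into the degenerate case (where $G_x$ fixes a point, so $i_\Lambda(\partial^2 G_x)\subseteq L^2(T)$) and the dense-orbits case, then chain Proposition~\ref{prop:restr}, Proposition~\ref{prop:dense1}, and Proposition~\ref{prop:restr1} to get $supp(\mu)\subseteq i_\Lambda(L^2(T_{G_x}))$, and finally observe $i_\Lambda(L^2(T_{G_x}))\subseteq L^2(T)$. You spell out the padding-and-closing argument where the paper declares the corresponding inclusions ``obvious'' or ``easy to see,'' but the decomposition, the key lemmas, and the order of application are identical.
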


\begin{proof}
Denote $H=G_x$. Fix a free basis $A$ of $F$ and a free basis $B$ of
$H$. If $H$ fixes a vertex of $T$ then $i_\Lambda(\partial^2
H)\subseteq L^2(T)$ and the statement is obvious. Suppose that $H$
acts nontrivially on $T$ and let $T_H$ be the minimal $H$-invariant
subtree of $T$. Then $H$ acts on $T_H$ with dense orbits.

Since $supp(\mu)\subseteq i_\Lambda(\partial^2 H)$ and $\langle T,
\mu\rangle=0$, Proposition~\ref{prop:restr} implies that $\langle
T_H,\mu|_H\rangle=0$. Since $H$ acts on $T_H$ with dense orbits, we
know by Proposition~\ref{prop:dense1} that $supp(\mu|_H)\subseteq
L^2(T_H)\subseteq
\partial^2H$.

Since $supp(\mu)\subseteq i_\Lambda(\partial^2 H)$,
Proposition~\ref{prop:restr1} implies that $supp(\mu)\subseteq
i_\Lambda (supp (\mu|_H))$ and hence
\[
supp(\mu)\subseteq i_\Lambda(L^2(T_H))
\]

It is easy to see that $i_\Lambda(L^2(T_H))\subseteq L^2(T)$ and the
statement of the proposition follows.

\end{proof}

\begin{thm}\label{thm:main1}
Let $F$ be a finitely generated nonabelian free group with a very
small isometric minimal action on an $\mathbb R$-tree $T$. Let
$\mu\in Curr(F)$ be such that $\langle T,\mu\rangle=0$. Then
$supp(\mu)\subseteq L^2(T)$.
\end{thm}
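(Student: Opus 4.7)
The plan is to split the proof into the three possible types of action on $T$ dictated by Proposition-Definition~\ref{pd:sim}: the discrete case, the dense orbits case, and the genuinely mixed case. The discrete case is handled by Proposition~\ref{prop:discr1} (since a minimal discrete very small action of $F$ is exactly the Bass-Serre tree of some splitting $\mathcal Y$ as in Convention~\ref{conv:F}), and the dense orbits case is handled by Proposition~\ref{prop:dense1}. So the only real work is in the mixed case, and my plan is to reduce it to these two cases using the machinery from Sections~9 and 10.

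In the mixed case I would invoke Proposition-Definition~\ref{pd:sim} to obtain the $F$-equivariant collapse map $q:T\to T_s$, where $T_s$ is a minimal very small discrete $F$-tree corresponding to a finite graph of groups $\mathcal Y$ with vertex groups $G_x$ equal to the set-wise stabilizers of the collapsed subtrees. The first step is to observe that $\langle T_s,\mu\rangle\le \langle T,\mu\rangle=0$, so $\langle T_s,\mu\rangle=0$, and therefore by Proposition~\ref{prop:discr1} applied to $T_s$ we get
\[
\mathrm{supp}(\mu)\ \subseteq\ L^2(T_s)\ =\ \bigcup_{x\in V'Y} i_\Lambda(\partial^2 G_x),
\]
the last equality being Lemma~\ref{lem:L2}. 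This is the key structural input that localizes the support of $\mu$ to the vertex groups of the splitting.

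The second step is to write $\mu$ as a finite sum supported on individual vertex groups. Apply Proposition~\ref{prop:split} to decompose
\[
\mu\ =\ \sum_{x\in V'Y}\mu_x, \qquad \mathrm{supp}(\mu_x)\subseteq i_\Lambda(\partial^2 G_x).
\]
Since each $\mu_x$ is a positive current and the intersection form is $\mathbb R_{\ge 0}$-linear in the current argument, we have
\[
0\ =\ \langle T,\mu\rangle\ =\ \sum_{x\in V'Y}\langle T,\mu_x\rangle,
\]
and each summand is nonnegative. Hence $\langle T,\mu_x\rangle=0$ for every $x\in V'Y$.

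The final step is to apply the ``one vertex group at a time'' result, Proposition~\ref{prop:comp}, to each $\mu_x$: this gives $\mathrm{supp}(\mu_x)\subseteq L^2(T)$ for every $x\in V'Y$. Since the support of a finite sum of positive Radon measures equals the union of the supports, we conclude
\[
\mathrm{supp}(\mu)\ =\ \bigcup_{x\in V'Y}\mathrm{supp}(\mu_x)\ \subseteq\ L^2(T),
\]
as required. The substantive content has already been absorbed into the earlier propositions (the dense orbits case uses the bounded-backtracking trick of Proposition~\ref{prop:LL} together with the cancellation control in Proposition-Definition of $n_{A,B}$; the discrete case uses the quasigeodesic path-word machinery of Proposition~\ref{prop:cyclic}; the passage from $\mu$ to $\mu|_{G_x}$ uses Propositions~\ref{prop:appr} and~\ref{prop:restr}). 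The main obstacle at this stage is not the argument itself but rather ensuring that the decomposition of $\mu$ behaves well when some of the $(\xi,\zeta)\in\mathrm{supp}(\mu)$ lie in the overlap $i_\Lambda(\partial^2 G_{x_1})\cap i_\Lambda(\partial^2 G_{x_2})$; this is precisely what Proposition~\ref{prop:split} takes care of by isolating the atoms sitting on translates of endpoints of edge-group generators and distributing them to an arbitrarily chosen vertex group containing them, so one just has to invoke it cleanly.
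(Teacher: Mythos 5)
Your proof is correct and follows essentially the same route as the paper: trichotomy via Proposition-Definition~\ref{pd:sim}, then the chain Proposition~\ref{prop:discr1} $\to$ Proposition~\ref{prop:split} $\to$ linearity $\to$ Proposition~\ref{prop:comp}. You are actually slightly more careful than the paper's written argument in one respect: the paper invokes Proposition~\ref{prop:split} without explicitly verifying its hypothesis $\mathrm{supp}(\mu)\subseteq L^2(T_s)$, whereas you supply the needed step by noting $\langle T_s,\mu\rangle\le\langle T,\mu\rangle=0$ and then applying Proposition~\ref{prop:discr1} to $T_s$.
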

\begin{proof}
If $F$ acts on $T$ with dense orbits or if the action is simplicial,
the statement of the theorem follows from
Proposition~\ref{prop:dense1} and Proposition~\ref{prop:discr1}.
Suppose neither of these two situations occurs. Let $q:T\to
T_s=\widetilde {\mathcal Y}$ be as in
Proposition-Definition~\ref{pd:sim}.

By Proposition~\ref{prop:split} we can decompose $\mu$ as
\[
\mu=\sum_{x\in V'Y} \mu_x,
\]
where each $\mu_x\in Curr(F)$ satisfies $supp(\mu_x)\subseteq
i_\Lambda(\partial^2 H)$.

We have
\[
0=\langle T,\mu\rangle=\sum_{x\in V'Y} \langle T,\mu_x\rangle
\]
and hence $\langle T,\mu_x\rangle=0$ for each $x\in V'Y$.

Therefore by Proposition~\ref{prop:comp} we have
$supp(\mu_x)\subseteq L^2 T$ for every $x\in VX$.

Since, obviously, $supp(\mu)=\cup_{x\in V'Y} supp(\mu_x)$, it
follows that $supp(\mu)\subseteq L^2(T)$, as required.

\end{proof}

\begin{lem}\label{lem:lam}
Let $F=F(A)$ be a finitely generated nonabelian free group. Let $T$
be an $\mathbb R$-tree with a very small nontrivial minimal
isometric action of $F$ on $T$. Then $L^2(T)\ne \partial^2 F$.
\end{lem}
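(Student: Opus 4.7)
The plan is to derive this lemma as a direct corollary of the "if" direction of the main theorem (Theorem~\ref{thm:main2}), combined with the existence of a hyperbolic element for the $F$-action on $T$. The key observation is that if $L^2(T)$ were to equal $\partial^2 F$, then \emph{every} current on $F$ would have support in $L^2(T)$, and Theorem~\ref{thm:main2} would force every intersection number $\langle T, \mu\rangle$ to vanish. This cannot happen as soon as some element of $F$ translates nontrivially on $T$.

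First I would produce a witness element $g \in F$ with $\|g\|_T > 0$. Because the $F$-action on $T$ is nontrivial and minimal, and because $F$ is a finitely generated nonabelian free group, the action cannot be by global fixed-point or by global inversion on an axis — minimality of a very small action on an $\R$-tree means that there are hyperbolic elements, and indeed the minimal invariant subtree is the union of their axes. Pick any such hyperbolic $g$, and form the counting current $\eta_g \in Curr(F)$. Then by the defining property of the intersection form,
\[
\langle T, \eta_g\rangle = \|g\|_T > 0.
\]

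Next I would apply Theorem~\ref{thm:main2}: if $supp(\eta_g) \subseteq L^2(T)$ then $\langle T,\eta_g\rangle = 0$, contradicting the previous display. Hence $supp(\eta_g)$ is not contained in $L^2(T)$, so there exists a point $(\xi_1,\xi_2) \in supp(\eta_g) \setminus L^2(T)$. In particular, $(\xi_1,\xi_2) \in \partial^2 F$ but $(\xi_1,\xi_2) \notin L^2(T)$, which yields $L^2(T) \neq \partial^2 F$, as required.

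There is no real obstacle here; the lemma is essentially a packaging of Theorem~\ref{thm:main2} together with the elementary fact that a minimal nontrivial very small $F$-action on an $\R$-tree contains hyperbolic elements. One could alternatively give a direct proof using bounded back-tracking (choosing a long power $g^N$ as a required subword and using Lemma~\ref{lem:LL} to force $\|w\|_T \gtrsim N\|g\|_T$ for any cyclically reduced $w$ containing $g^N$), but this is strictly more work and the appeal to the already-established Theorem~\ref{thm:main2} is the cleanest route.
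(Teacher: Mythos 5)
Your argument is correct, and it takes a genuinely different route from the paper. You derive Lemma~\ref{lem:lam} as a corollary of Theorem~\ref{thm:main2} (proved in Section~5, before this lemma appears, so there is no circularity): if $L^2(T)=\partial^2 F$ then every current automatically has support in $L^2(T)$, forcing $\langle T,\mu\rangle=0$ for all $\mu$; taking $\mu=\eta_g$ for a hyperbolic $g$ (which exists since a nontrivial minimal action of a finitely generated group on an $\R$-tree must have a hyperbolic element, otherwise all elements would be elliptic and a Helly/Serre-type argument would give a global fixed point) gives $\langle T,\eta_g\rangle = \|g\|_T>0$, a contradiction. This is clean, and indeed the hypothesis $L^2(T)=\partial^2 F$ gives you the containment $supp(\mu)\subseteq L^2(T)$ for free, so Theorem~\ref{thm:main2} applies verbatim.

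The paper instead gives a short self-contained argument at the level of bounded back-tracking: with $C$ bounding $BBT_{T,p}(A)$, one picks $v\in F(A)$ with $\|v\|_T\ge 4C+2$ (possible since the action is nontrivial); then for any cyclically reduced $w=vu$ containing $v$ as an initial segment, Lemma~\ref{lem:LL} forces $\|w\|_T\ge \|v\|_T-4C\ge 2$, so no such $w$ can satisfy $\|w\|_T\le 1$, showing $v\notin L^2(T)_A$. The trade-off is the one you anticipated in your last paragraph: the paper's argument makes the quantitative obstruction visible and is independent of the heavier Theorem~\ref{thm:main2}, whereas your version is shorter given what is already available and more clearly illustrates the conceptual content (namely, that $L^2(T)$ being all of $\partial^2 F$ would kill all intersection numbers). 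Both are valid; the paper's proof is more in the "elementary, self-contained" spirit that the rest of Section~10 follows, but nothing would break if your version were substituted.
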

\begin{proof}
Put $L=L^2(T)$. We need to show that there exists $v\in F(A)$ such
that $v\not\in L_A$.

Let $p\in T$. By Proposition~\ref{prop:BBT}, there is $C<\infty$
such that $BBT_{T,p}(A)<C$.

Since the action of $F$ on $T$ is nontrivial, there exists a freely
reduced $v\in F(A)$ such that $||v||_T\ge 4C+2$. Hence $d_T(p,vp)\ge
||v||_T\ge 4C+2$. Suppose that $v\in L_A$. Then there exists a
cyclically reduced word $w$ in $F(A)$ such that $v$ is an initial
segment of $w$ and such that $||w||_T\le 1$. We can write $w$ as a
reduced product $w=vu$. by Lemma~\ref{lem:LL} we have
\[
d_T(p,wp)\ge d_T(p,vp)+d_T(p,up)-2C\ge ||v||_T-2C.
\]

Since $w$ is cyclically reduced, Lemma~\ref{lem:LL} also implies
that
\[
||w||_T\ge d_T(p,wp)-2C\ge ||v||_T-4C\ge 2.
\]
This contradicts our assumption that $||w||_T\le 1$. Thus $v\not\in
L_A$.

\end{proof}

\begin{cor}\label{cor:full}
Let $\mu\in Curr(F)$ be a current with full support. Then for every
very small action of $F$ on an $\mathbb R$-tree $T$ we have $\langle
T, \mu\rangle>0$.
\end{cor}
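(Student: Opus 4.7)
The plan is to argue by contradiction, using exactly the two results immediately preceding the corollary statement: Theorem~\ref{thm:main1} (the ``only if'' direction of Theorem~\ref{thm:main}) and Lemma~\ref{lem:lam}. Specifically, I would suppose for contradiction that $\langle T,\mu\rangle = 0$ for some very small nontrivial minimal isometric action of $F$ on an $\mathbb{R}$-tree $T$, and derive that this forces $L^2(T) = \partial^2 F$, contradicting Lemma~\ref{lem:lam}.

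First, I would apply Theorem~\ref{thm:main1}: from $\langle T,\mu\rangle = 0$ we obtain $supp(\mu) \subseteq L^2(T)$. Since $\mu$ has full support by hypothesis, $supp(\mu) = \partial^2 F$, and therefore $\partial^2 F \subseteq L^2(T)$. Combined with the obvious inclusion $L^2(T) \subseteq \partial^2 F$ coming from the definition of an algebraic lamination (Definition in Section~\ref{sect:laminations}), this yields the equality $L^2(T) = \partial^2 F$.

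Second, I would invoke Lemma~\ref{lem:lam}, which exhibits an explicit freely reduced word $v \in F(A)$ with $v \notin L_A$ whenever the action of $F$ on $T$ is very small, nontrivial, and minimal. This says precisely that $L^2(T) \ne \partial^2 F$, directly contradicting the conclusion reached in the previous paragraph. Hence no such $\mu$ can satisfy $\langle T,\mu\rangle = 0$, and the strict inequality $\langle T,\mu\rangle > 0$ follows.

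There is no real obstacle here, since all the substantive work is already packaged into Theorem~\ref{thm:main1} and Lemma~\ref{lem:lam}. The only mild point to flag is the nontriviality hypothesis on the action: if $F$ fixed a point of $T$, then every translation length would vanish and $\langle T,\mu\rangle$ would be zero regardless of $\mu$. In the context of the paper this is not an issue because $T$ is implicitly assumed to lie in $\overline{cv}(F)$ (or at least to afford a nontrivial minimal very small action), which is exactly the hypothesis required to apply Lemma~\ref{lem:lam}.
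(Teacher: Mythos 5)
Your proof is correct and follows exactly the same route as the paper: apply Theorem~\ref{thm:main1} to get $supp(\mu)\subseteq L^2(T)$, then contradict the full-support hypothesis via Lemma~\ref{lem:lam}. Your remark about the nontriviality/minimality of the action is a sensible clarification, but it does not change the substance of the argument.
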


\begin{proof}
Suppose there is some very small $T$ such that $\langle \mu,
T\rangle=0$. By Theorem~\ref{thm:main1}  it follows that
$supp(\mu)\subseteq L^2(T)$. By Lemma~\ref{lem:lam} this contradicts
our assumption that $\mu$ has full support.
\end{proof}

\section{Length compactness for currents with full support}

\begin{defn}[Automorphic length spectrum of a current]

  Let $T\in \overline{cv}(F)$ and $\mu\in Curr(F)$. The
  \emph{automorphic length spectrum of $\mu$ with respect to $T$ } is
  the set
\[
{\mathcal S}_T(\mu):=\{ \langle T, \phi \mu\rangle: \phi\in Out(F)\}\subseteq
\mathbb R.
\]
\end{defn}

Note that by $Out(F)$-invariance of the intersection form, we always have
$\langle T, \phi \mu\rangle=\langle \phi^{-1}T, \mu\rangle$. Hence
\[
{\mathcal S}_T(\mu):=\{ \langle \phi T, \mu\rangle: \phi\in Out(F)\}.
\]

\begin{thm}\label{thm:comp}
Let $\mu\in Curr(F)$ be a current with full support and let $T\in
cv(F)$. Then:

\begin{enumerate}
\item For any $C>0$ the set
\[
\{\phi\in Out(F): \langle T, \phi \mu\rangle \le C\}
\]
is finite.

\item The set ${\mathcal S}_T(\mu)$ is a discrete subset of $\mathbb R_{\ge 0}$.

\item Suppose $\phi_n\in Out(F)$ is an infinite sequence of distinct
elements such that for some $\lambda_n\ge 0$ and some $\mu'\in
Curr(F)$ we have $\lim_{n\to\infty} \lambda_n \phi_n \mu =\mu'$.
Then $\lim_{n\to\infty} \lambda_n=0$.
\end{enumerate}

\end{thm}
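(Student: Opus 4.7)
The strategy is to deduce (1) and (2) from (3) together with Theorem~\ref{thm:main}, and to concentrate the work on part (3).

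For (2), a non-discrete point of ${\mathcal S}_T(\mu)$ would give infinitely many distinct $\phi_n\in Out(F)$ with $\langle T,\phi_n\mu\rangle\le C$ for some $C$, contradicting (1). For (1), assume for contradiction that there are infinitely many distinct $\phi_n\in Out(F)$ with $\langle T,\phi_n\mu\rangle\le C$. By compactness of $\mathbb PCurr(F)$, after passing to a subsequence there exist $\lambda_n>0$ with $\lambda_n\phi_n\mu\to\mu'$ for some nonzero $\mu'\in Curr(F)$. Granting (3), $\lambda_n\to 0$, so continuity of the intersection form yields
\[
\langle T,\mu'\rangle \;=\; \lim_n\lambda_n\langle T,\phi_n\mu\rangle \;\le\; \lim_n C\lambda_n \;=\; 0.
\]
Since $T\in cv(F)$ is free simplicial we have $L^2(T)=\emptyset$, so Theorem~\ref{thm:main} forces $supp(\mu')\subseteq L^2(T)=\emptyset$, hence $\mu'=0$, contradicting the choice of subsequence.

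For (3), assume $\phi_n\in Out(F)$ are distinct and $\lambda_n\phi_n\mu\to\mu'$ in $Curr(F)$. The plan is to exploit the invariance $\langle T,\phi_n\mu\rangle=\langle\phi_n^{-1}T,\mu\rangle$ and to study the translates $T_n:=\phi_n^{-1}T$. By compactness of $\overline{CV}(F)$, after passing to a subsequence I may choose $c_n>0$ so that $T_n/c_n\to T_*$ in $\overline{cv}(F)$ for some $T_*\in\overline{cv}(F)$. Since $\mu$ has full support, Corollary~\ref{cor:full} gives $\langle T_*,\mu\rangle>0$, and by continuity together with $\mathbb R_{\ge 0}$-homogeneity of the intersection form,
\[
\lambda_n c_n\cdot\langle T_n/c_n,\mu\rangle \;=\; \lambda_n\langle T,\phi_n\mu\rangle \;\longrightarrow\; \langle T,\mu'\rangle,
\]
while $\langle T_n/c_n,\mu\rangle\to\langle T_*,\mu\rangle>0$. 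Hence $\lambda_n c_n$ converges to the finite number $\langle T,\mu'\rangle/\langle T_*,\mu\rangle$, and (3) will follow as soon as $c_n\to\infty$ is established.

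To see $c_n\to\infty$, I would use proper discontinuity of the $Out(F)$-action on $CV(F)$: since the $\phi_n$ are distinct and $T\in cv(F)$, the accumulation point must satisfy $[T_*]\in\partial CV(F)$, so $T_*$ is not free simplicial. On the other hand, $T\in cv(F)$ being free simplicial gives some $\epsilon_T>0$ with $||g||_T\ge\epsilon_T$ for every $g\ne 1$, and since $||g||_{T_n}=||\phi_n(g)||_T$ this lower bound is preserved under the $Out(F)$-action. If $c_n\le M$ along some subsequence then $||g||_{T_n/c_n}\ge\epsilon_T/M$, and passing to the limit gives $||g||_{T_*}\ge\epsilon_T/M>0$ for every $g\ne 1$. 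The main obstacle I foresee is to show that such a uniform positive lower bound on nontrivial translation lengths forces $T_*$ to be free simplicial, so that $[T_*]\in CV(F)$, contradicting $[T_*]\in\partial CV(F)$. I plan to handle this by invoking the structure theorem of Proposition-Definition~\ref{pd:sim}: any nontrivial dense-orbit component of $T_*$ would, by density of orbits, admit elements of arbitrarily small translation length, which is incompatible with the uniform lower bound, so every component of $T_*$ must be a point and $T_*$ must be simplicial; and the absence of elliptics then yields trivial vertex groups, so $T_*\in cv(F)$ as desired.
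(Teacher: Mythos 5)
Your proposal is correct, and it takes a genuinely different logical organization from the paper, although the essential compactness computation is the same. The paper proves (1) directly: given infinitely many distinct $\phi_n$ with $\langle T,\phi_n\mu\rangle\le C$, it chooses $c_n>0$ with $c_n\phi_n^{-1}T\to T_\infty$ in $\overline{cv}(F)$, uses proper discontinuity to force $[T_\infty]\in\partial CV(F)$, deduces $c_n\to 0$ (the same translation-length argument you give, exploiting $||g||_T\ge\delta$ on the free simplicial side and the existence of small translation lengths on the degenerate limit side), and then obtains $\langle T_\infty,\mu\rangle=\lim c_n\langle T,\phi_n\mu\rangle\le\lim c_nC=0$, contradicting Corollary~\ref{cor:full}. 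Parts (2) and (3) are then one-line consequences of (1). You instead prove (3) directly via exactly this normalization argument (your $c_n\to\infty$ is the paper's $c_n\to 0$ after inverting the scaling convention) and then derive (1) from (3) by an additional use of compactness of $\mathbb PCurr(F)$ together with Theorem~\ref{thm:main} and $L^2(T)=\emptyset$ for $T\in cv(F)$. What your route buys is that it makes the role of compactness of $\mathbb PCurr(F)$ explicit and shows the implications run both ways; what the paper's route buys is economy, since (3) drops out of (1) immediately without a second compactness extraction. Your treatment of the ``main obstacle'' is also more careful than the paper, which simply asserts that $T_\infty\notin cv(F)$ yields nontrivial elements of arbitrarily small translation length; your justification via Proposition-Definition~\ref{pd:sim} (ruling out elliptics and dense-orbit subtrees under a uniform positive lower bound on translation lengths) fills in that assertion honestly.

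Two small points you should tidy. First, in your proof of (3) the conclusion $\lambda_n\to 0$ must hold along the entire original sequence, whereas you pass to a subsequence to get $T_n/c_n\to T_*$; the standard fix is to argue by contradiction (if $\lambda_{n_k}\ge\epsilon>0$ along some subsequence, extract a further subsequence on which the $\overline{CV}(F)$ compactness argument applies and derive $\epsilon\le\lim\lambda_{n_k}=0$) or to note that every subsequence has a further subsequence along which $\lambda$ tends to $0$. Second, when you lift projective convergence $[\phi_n\mu]\to[\mu']$ in $\mathbb PCurr(F)$ to convergence $\lambda_n\phi_n\mu\to\mu'$ in $Curr(F)$ with $\mu'\ne 0$, it is cleanest to normalize by $\lambda_n=1/\langle T,\phi_n\mu\rangle$ (positive because $T\in cv(F)$ and $\phi_n\mu\ne 0$), which makes both the existence of the lift and the nonvanishing of $\mu'$ transparent. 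With these adjustments the argument is complete.
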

\begin{proof}
It is obvious that (1) implies (2).

To see that (1) holds, suppose that for some $C>0$ there exists an
infinite sequence of distinct elements $\phi_n\in Out(F)$ such that
for every $n\ge 1$ we have $\langle T, \phi_n \mu\rangle \le C$.
By rescaling $T$, we may assume that the graph $T/F$ has volume $1$,
that is $T\in CV(F)$.

Since $\overline{CV}(F)=CV(F)\cup
\partial CV(F)$ is compact, after passing to a subsequence we may
assume that $\lim_{n\to\infty} [\phi_n^{-1} T]=[T_\infty]$ in
$\overline{CV}(F)$ for some $T_\infty\in \overline{cv}(F)$. Thus there is a
sequence $c_n\ge 0$ such that
\[
\lim_{n\to\infty} c_n \phi_n^{-1} T=T_{\infty}\tag{$\clubsuit$}
\]
in $\overline{cv}(F)$. Moreover, since all $\phi_n\in Out(F), n\ge
1$ are distinct and the action of $Out(F)$ on $CV(F)={\mathbb P}
cv(F)$ is properly discontinuous, we have $[T_{\infty}]\in \partial
CV(F)$. This implies that $\lim_{n\to\infty} c_n=0$.

Indeed, suppose not. Then, after passing to a subsequence, we may
assume that $c_n\ge c>0$ for every $n\ge 1$. Since
$T_\infty\not\in cv(F)$, there are nontrivial elements in $F$ acting
on $T_\infty$ with arbitrary small translation length. That is,
there exists a sequence $g_i\in F, g_i\ne 1$ such that
$\lim_{i\to\infty} ||g_i||_{T_\infty}=0$. Recall that by definition of the left action of
$Out(F)$ on $\overline{cv}(F)$, we have $||g||_{\phi_{n}^{-1} T}=||\phi_{n}(g)||_T$ for every $g\in F$ and every $n\ge 1$.
Then $(\clubsuit)$ implies
that there is a sequence $n_i\ge 1$ with $\lim_{i\to\infty}
n_i=\infty$ such that
\[
0=\lim_{i\to\infty} c_{n_i} ||g_i||_{\phi_{n_i}^{-1} T}=\lim_{i\to\infty} c_{n_i} ||\phi_{n_i}g_{i}||_T.
\]
On the other hand, since the action of $F$ on $T$ is free and
simplicial, there is some $\delta>0$ such that for every $f\in F, f\ne
1$ we have $||f||_T\ge \delta$. Hence for every $i\ge 1$
\[
c_{n_i} ||\phi_{n_i}g_{i}||_T\ge c\delta>0,
\]
yielding a contradiction. Thus indeed $\lim_{n\to\infty}
c_n=0$.

By $Out(F)$-invariance of the intersection form we have
\[
0\le \langle c_n
\phi^{-1}_nT,\mu\rangle=c_n\langle
\phi^{-1}_nT,\mu\rangle=c_n \langle T, \phi_n \mu\rangle \le
c_n C \to_{n\to \infty} 0.
\]
Therefore, by the continuity of the intersection form~\cite{KL4}, we have
\[
0=\lim_{n\to\infty}\langle c_n
\phi^{-1}_nT,\mu\rangle=\langle T_{\infty},\mu\rangle.
\]
However, this contradicts the conclusion of Corollary~\ref{cor:full}
since by assumption $\mu$ has full support. Thus part (1) is
established.

We now show that (1) implies (3). Let $\lambda_n,\phi_n,\mu'$ be as
in (3). Part (1) implies that $\lim_{n\to\infty} \langle T, \phi_n
\mu\rangle=\infty$. On the other hand,
\[
\lim_{n\to\infty} \lambda_n \langle T, \phi_n \mu\rangle
=\lim_{n\to\infty} \langle T, \lambda_n\phi_n \mu\rangle =\langle T,
\mu'\rangle<\infty.
\]
This implies that $\lim_{n\to\infty} \lambda_n=0$, as required.
\end{proof}

\section{Unique ergodicity}\label{sect:ue}

Recall that an element $\phi\in Out(F)$ is called \emph{reducible}
if there exists a free product decomposition $F=C_1\ast\dots C_k\ast
F'$, where $k\ge 1$ and $C_i\ne \{1\}$, such that $\phi$ permutes the
conjugacy classes of subgroups $C_1,\dots, C_k$ in $F$. An element
$\phi\in Out(F)$ is called \emph{irreducible} if it is not
reducible.

\begin{defn}
\label{defniwip}
An element $\phi\in Out(F)$ is said to be
\emph{irreducible with irreducible powers} or an \emph{iwip} for
short,  if for every $n\ge 1$ $\phi^n$ is irreducible (sometimes such
automorphisms are also called \emph{fully irreducible}). Thus $\phi\in
Aut(F)$ is an iwip if and only if  no positive power of $\phi$
preserves the conjugacy class of a proper free factor of $F$. An
element $\phi\in Aut(F)$ is \emph{atoroidal} that is, if there does not exist a nontrivial conjugacy class in $F$ that is fixed by some positive power of $\phi$.
\end{defn}

Let $\phi\in Out(F)$ be an atoroidal iwip. It is known, by the work of Reiner Martin in the case of currents and by the result of Levitt and Lustig in the case of $\overline{CV}(F)$ that the (left) action of $\phi$ has ``North-South'' dynamics on both $\mathbb PCurr(F)$ and $\overline{CV}(F)$:
\begin{prop}\label{prop:ns}
Let $\phi\in Out(F)$ be an atoroidal iwip.
Then the following hold:
\begin{enumerate}
\item \cite{LL} The action of $\phi$ on $\overline{CV}(F)$ has exactly two distinct fixed points $[T_+],[T_-]$ and, moreover, for any $[T]\in \overline{CV}(F)$, $[T]\ne [T_-]$ we have $\lim_{n\to\infty}\phi^n[T]=[T_+]$ and for any $[T]\in \overline{CV}(F)$, $[T]\ne [T_+]$ we have $\lim_{n\to\infty}\phi^{-n}[T]=[T_-]$.
\item \cite{Ma} The action of $\phi$ on $\mathbb PCurr(F)$ has exactly two distinct fixed points $[\mu_+],[\mu_-]$ and, moreover, for any $[\mu]\in \mathbb PCurr(F)$, $[\mu]\ne [\mu_-]$ we have $\lim_{n\to\infty}\phi^n[\mu]=[\mu_+]$ and for any $[\mu]\in \mathbb PCurr(F)$, $[\mu]\ne [\mu_+]$ we have $\lim_{n\to\infty}\phi^{-n}[\mu]=[\mu_-]$.

\item \cite{KL4} We have $\langle T_+,\mu_+\rangle=\langle T_-,\mu_-\rangle=0$ and $\langle T_+,\mu_-\rangle > 0, \langle T_-,\mu_-\rangle>0.$
\end{enumerate}
\end{prop}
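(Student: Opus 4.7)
The plan is to treat the three parts in the order listed, appealing to the cited literature for the dynamical statements and using the structural properties of the intersection form for the vanishing/positivity assertions in part (3). Parts (1) and (2) are exactly the North-South dynamics theorems proved by Levitt--Lustig \cite{LL} for $\overline{CV}(F)$ and by Reiner Martin \cite{Ma} for $\mathbb PCurr(F)$, so I would simply quote them, checking that the atoroidal hypothesis is what rules out periodic conjugacy classes and guarantees that both fixed points are genuinely distinct (an iwip with a periodic conjugacy class would produce a rational current fixed by some power of $\phi$, and that current would lie between $[\mu_+]$ and $[\mu_-]$, forcing a collision).

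For the vanishing statements in part (3), I would use the $Out(F)$-invariance and homogeneity of the intersection form from Proposition-Definition~\ref{pd:int-form}. Since $\phi[T_+]=[T_+]$ and $\phi[\mu_+]=[\mu_+]$ projectively, there exist $\lambda_+,\alpha_+>0$ with $\phi T_+=\lambda_+ T_+$ and $\phi \mu_+=\alpha_+\mu_+$. The iwip expansion factor for an atoroidal iwip acting on its attracting tree is strictly greater than one (this is the Perron--Frobenius stretch factor coming from a train-track representative), and the same holds for the current eigenvalue by the argument of Martin in \cite{Ma}; thus $\lambda_+\alpha_+>1$. Then
\[
\langle T_+,\mu_+\rangle=\langle \phi T_+,\phi\mu_+\rangle=\lambda_+\alpha_+\langle T_+,\mu_+\rangle,
\]
which forces $\langle T_+,\mu_+\rangle=0$. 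Replacing $\phi$ by $\phi^{-1}$ and $[T_+],[\mu_+]$ by $[T_-],[\mu_-]$ gives $\langle T_-,\mu_-\rangle=0$ by the identical computation.

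For the positivity assertion (noting the apparent typo: the second inequality should read $\langle T_-,\mu_+\rangle>0$), I would argue by contradiction using Theorem~\ref{thm:main}. Suppose $\langle T_+,\mu_-\rangle=0$; then Theorem~\ref{thm:main} yields $\mathrm{supp}(\mu_-)\subseteq L^2(T_+)$. Combined with $\mathrm{supp}(\mu_+)\subseteq L^2(T_+)$ from the vanishing part, both fixed currents would be supported on $L^2(T_+)$. Using the explicit identification, going back to Bestvina--Feighn--Handel and made precise in \cite{CHL3}, of $L^2(T_+)$ with the attracting lamination of $\phi^{-1}$ (equivalently, the repelling lamination for $\phi$), one argues that the unique, up to scalar, flip-invariant $F$-invariant current supported on this lamination is $\mu_-$, not $\mu_+$; a direct contradiction with having both currents there. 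The symmetric argument handles $\langle T_-,\mu_+\rangle>0$. The main obstacle is this last uniqueness statement; rather than reproving it, the cleanest route is to cite the positivity directly from \cite{KL4}, where it is established as part of the extension of the intersection form to $\overline{cv}(F)$.
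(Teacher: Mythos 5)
The paper gives no proof of Proposition~\ref{prop:ns}: it is a compilation of prior results, cited directly to \cite{LL} for part~(1), to \cite{Ma} for part~(2), and to \cite{KL4} for part~(3). Your plan of simply quoting these three sources is therefore exactly the route the paper takes, and your observation that ``$\langle T_-,\mu_-\rangle>0$'' must be a typo for ``$\langle T_-,\mu_+\rangle>0$'' is correct (it would otherwise contradict $\langle T_-,\mu_-\rangle=0$ stated two symbols earlier).

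Your added eigenvalue argument for the vanishing part of~(3) is sound and is an honest extra contribution. Two remarks, though. First, a minor imprecision: the stretch factor $\lambda_+$ for $\phi$ acting on $T_+$ and the eigenvalue $\alpha_+$ for $\phi$ acting on $\mu_+$ are \emph{not} the same Perron--Frobenius number; with the paper's conventions one has $\lambda_+=\lambda(\phi^{-1})$ and $\alpha_+=\lambda(\phi)$, which in general differ. Both do exceed~$1$, so the product $\lambda_+\alpha_+>1$ and your computation $\langle T_+,\mu_+\rangle=\lambda_+\alpha_+\langle T_+,\mu_+\rangle$ forces the vanishing, as you claim. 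Second, and more seriously, your sketch of the positivity assertion misidentifies the dual lamination: $L^2(T_+)$ is (the diagonal closure of) the \emph{attracting} lamination of $\phi$, not of $\phi^{-1}$. This is precisely why $supp(\mu_+)\subseteq L^2(T_+)$ is consistent with $\langle T_+,\mu_+\rangle=0$; the paper makes this identification explicit in the closing remarks of Section~\ref{sect:ue}. As written your sketch would conclude that $\mu_-$ is the unique current carried by $L^2(T_+)$, which is false. A corrected version of the sketch (if $supp(\mu_-)\subseteq L^2(T_+)$ then $\mu_-$ equals $\mu_+$ up to scaling by uniqueness of the current carried by the attracting lamination) would also be dangerously close to circular, since that uniqueness is the content of Theorem~\ref{thm:ue1}, which the paper derives from part~(3) of this very proposition. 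Your fallback decision to cite \cite{KL4} for positivity is therefore the right one and is what the paper does.
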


Here we prove that $\mu_+$ is uniquely ergodic in the following sense:

\begin{thm}\label{thm:ue1}
Let $\phi\in Out(F)$, $\mu_{\pm}$, $T_{\pm }$ be as in Proposition~\ref{prop:ns}. If $[\mu]\in \mathbb PCurr(F)$ is such that $supp(\mu)\subseteq supp(\mu_+)$ then $[\mu]=[\mu_+]$.
\end{thm}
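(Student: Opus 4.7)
The plan is to combine Theorem~\ref{thm:main} (in both directions) with the North-South dynamics of Proposition~\ref{prop:ns}. The first move is to transfer the support hypothesis into a vanishing statement for the intersection form. Since $\langle T_+,\mu_+\rangle = 0$ by Proposition~\ref{prop:ns}(3), the ``only if'' half of Theorem~\ref{thm:main} (i.e.\ Theorem~\ref{thm:main1}) yields $supp(\mu_+)\subseteq L^2(T_+)$. Combined with the hypothesis $supp(\mu)\subseteq supp(\mu_+)$, this forces $supp(\mu)\subseteq L^2(T_+)$; applying now the ``if'' half (Theorem~\ref{thm:main2}), we obtain $\langle T_+,\mu\rangle = 0$.

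Next I would argue by contradiction. Suppose $[\mu]\ne[\mu_+]$. By the North-South dynamics of $\phi^{-1}$ on $\mathbb PCurr(F)$ (Proposition~\ref{prop:ns}(2)), the projective class $[\phi^{-n}\mu]$ converges to $[\mu_-]$, so we may choose scalars $c_n>0$ with $c_n\phi^{-n}\mu\to \mu_-$ in $Curr(F)$. Since $\phi[T_+]=[T_+]$ there exists $\lambda_+>0$ with $\phi T_+ = \lambda_+ T_+$, and the $Out(F)$-invariance of the intersection form (Proposition-Definition~\ref{pd:int-form}(3)) gives, for each $n$,
\[
\langle T_+,\,c_n\phi^{-n}\mu\rangle \;=\; c_n\langle \phi^n T_+,\mu\rangle \;=\; c_n\lambda_+^n\langle T_+,\mu\rangle \;=\; 0.
\]
Passing to the limit, continuity of the intersection form yields $\langle T_+,\mu_-\rangle=0$, which directly contradicts the strict positivity $\langle T_+,\mu_-\rangle>0$ of Proposition~\ref{prop:ns}(3). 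Hence $[\mu]=[\mu_+]$.

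The argument presents no serious obstacle. The two small technicalities I would be careful about are (a) lifting the projective convergence $[\phi^{-n}\mu]\to[\mu_-]$ to a genuine convergence $c_n\phi^{-n}\mu\to\mu_-$ in $Curr(F)$, which follows from the standard compatibility between $Curr(F)$ and its projectivization (one can, for instance, renormalize so that $\|c_n\phi^{-n}\mu\|_A=\|\mu_-\|_A$ for a fixed free basis $A$), and (b) observing that the vanishing $\langle T_+,\mu\rangle=0$ is preserved under the rescaling by $c_n\lambda_+^n$ trivially, because the quantity being rescaled is already zero; this is exactly what makes the limit argument work.
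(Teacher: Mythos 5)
Your proof is correct and follows essentially the same route as the paper's: derive $\langle T_+,\mu\rangle=0$ via both directions of Theorem~\ref{thm:main}, then use North-South dynamics, $Out(F)$-invariance and continuity of the intersection form to reach $\langle T_+,\mu_-\rangle=0$, contradicting Proposition~\ref{prop:ns}(3). The two technical points you flag (lifting projective convergence by normalizing in $Curr(F)$, and the trivial rescaling of zero) are exactly the ones the paper handles implicitly.
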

\begin{proof}
Note that since $\phi$ fixes $[T_+]$, we have $\phi T_+=\lambda T_+$ for some $\lambda>0$.
Suppose that $supp(\mu)\subseteq supp(\mu_+)$ but $[\mu]\ne [\mu_+]$. Since $\langle T_+,\mu_+\rangle=0$, Theorem~\ref{thm:main} implies that $supp(\mu_+)\subseteq L^2(T_+)$. Since $supp(\mu)\subseteq supp(\mu_+)\subseteq L^2(T_+)$, Theorem~\ref{thm:main} also implies that $\langle T_+,\mu\rangle=0$. Since $[\mu]\ne [\mu_+]$ we have $\lim_{n\to\infty}\phi^{-n}[\mu]=[\mu_-]$, so that for some sequence $c_n>0$ we have $\lim_{n\to\infty} c_n\phi^{-n}\mu=\mu_-$ in $Curr(F)$. Hence for any $n\ge 1$
\[
\langle T_+, c_n\phi^{-n}\mu\rangle=\langle \phi^n T_+, c_n\mu\rangle=\langle \lambda^n T_+,c_n\mu\rangle=\lambda^nc_n\langle T_+,\mu\rangle=0.
\]
Therefore, by continuity of the intersection form
\[
0=\lim_{n\to\infty} \langle T_+, c_n\phi^{-n}\mu\rangle=\langle T_+, \lim_{n\to\infty} c_n\phi^{-n}\mu\rangle=\langle T_+,\mu_-\rangle,
\]
which contradicts part (3) of Proposition~\ref{prop:ns}.
\end{proof}

By a similar argument we obtain a dual statement for $T_+$.

\begin{thm}\label{thm:ue2}
Let $\phi\in Out(F)$, $\mu_{\pm}$, $T_{\pm }$ be as in Proposition~\ref{prop:ns}. Let $[T]\in \overline{CV}(F)$ be such that $L^2(T_+)\subseteq L^2(T)$. Then $[T]=[T_+]$.
\end{thm}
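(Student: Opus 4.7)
My plan is to mirror the proof of Theorem~\ref{thm:ue1} by dualizing the roles of trees and currents. The key observation is that Theorem~\ref{thm:main} is symmetric enough to turn a containment of laminations into a vanishing of intersection numbers, exactly as a containment of supports did in the previous proof.

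First, I would start from the identity $\langle T_+,\mu_+\rangle = 0$, supplied by Proposition~\ref{prop:ns}(3). Applying the ``only if'' direction of Theorem~\ref{thm:main} to the pair $(T_+,\mu_+)$ gives $supp(\mu_+) \subseteq L^2(T_+)$. Combined with the hypothesis $L^2(T_+) \subseteq L^2(T)$, this yields $supp(\mu_+) \subseteq L^2(T)$. Now feeding this containment back into the ``if'' direction of Theorem~\ref{thm:main} (i.e.\ Theorem~\ref{thm:main2}) applied to the pair $(T,\mu_+)$, I conclude
\[
\langle T,\mu_+\rangle = 0.
\]

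Next, suppose for contradiction that $[T]\ne [T_+]$. Then by Proposition~\ref{prop:ns}(1) the North--South dynamics give $\lim_{n\to\infty}\phi^{-n}[T] = [T_-]$, so there exists a sequence of scalars $c_n>0$ with $c_n\phi^{-n}T \to T_-$ in $\overline{cv}(F)$. Because $[\mu_+]$ is $\phi$-fixed, write $\phi\mu_+ = \lambda\mu_+$ for some $\lambda>0$. Then using $Out(F)$-invariance of the intersection form and the homogeneity in the first argument, for every $n\ge 1$,
\[
\langle c_n\phi^{-n}T,\, \mu_+\rangle \;=\; c_n\langle T,\, \phi^n\mu_+\rangle \;=\; c_n\lambda^n\langle T,\mu_+\rangle \;=\; 0.
\]

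Finally, by continuity of the intersection form (established in \cite{KL4}; see Proposition-Definition~\ref{pd:int-form}(3)),
\[
\langle T_-,\mu_+\rangle \;=\; \lim_{n\to\infty}\langle c_n\phi^{-n}T,\mu_+\rangle \;=\; 0,
\]
which contradicts Proposition~\ref{prop:ns}(3), where $\langle T_-,\mu_+\rangle>0$. Hence $[T]=[T_+]$, as required. The whole argument is essentially symmetric to the proof of Theorem~\ref{thm:ue1}; I do not foresee a technical obstacle, since all of the ingredients (the equivalence in Theorem~\ref{thm:main}, North--South dynamics on $\overline{CV}(F)$, and positivity of $\langle T_-,\mu_+\rangle$) are already in place. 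The only minor point to be careful about is invoking Theorem~\ref{thm:main} in \emph{both} directions—using the ``only if'' direction to pass from $\langle T_+,\mu_+\rangle=0$ to a support/lamination containment, and the ``if'' direction to pass from the enlarged lamination containment back to $\langle T,\mu_+\rangle=0$.
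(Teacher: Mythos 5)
Your proof is correct and follows essentially the same argument as the paper: derive $supp(\mu_+)\subseteq L^2(T_+)$ from $\langle T_+,\mu_+\rangle=0$, chain it with the hypothesis to get $supp(\mu_+)\subseteq L^2(T)$, invoke Theorem~\ref{thm:main} to conclude $\langle T,\mu_+\rangle=0$, and then run the North--South dynamics plus continuity of the intersection form to contradict $\langle T_-,\mu_+\rangle>0$. The only cosmetic difference is that you spell out the two directions of Theorem~\ref{thm:main} explicitly where the paper leaves that step implicit (having already established $supp(\mu_+)\subseteq L^2(T_+)$ in the proof of Theorem~\ref{thm:ue1}).
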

\begin{proof}
Note that since $\phi$ fixes $[\mu_+]$, there is $r>0$ such that $\phi \mu_+=r \mu_+$.
Suppose that $[T]\in \overline{CV}(F)$ be such that $L^2(T_+)\subseteq L^2(T)$ but that $[T]\ne [T_+]$.
We have $supp(\mu_+)\subseteq L^2(T_+)\subseteq L^2(T)$ and therefore by Theorem~\ref{thm:main} we have $\langle T,\mu_+\rangle=0$. Since $[T]\ne [T_+]$, we have $\lim_{n\to\infty}\phi^{-n}[T]=[T_-]$, so that for some sequence $c_n>0$ we have $\lim_{n\to\infty} c_n\phi^{-n}T=T_-$ in $\overline{cv}(F)$. Hence for any $n\ge 1$
\[
\langle c_n\phi^{-n}T, \mu_+\rangle=\langle c_nT, \phi^n\mu_+\rangle=\langle c_nT, r^n \mu_+\rangle=r^nc_n\langle T,\mu_+\rangle=0.
\]
Therefore, by continuity of the intersection form
\[
0=\lim_{n\to\infty} \langle c_n\phi^{-n}T,\mu_+\rangle=\langle \lim_{n\to\infty} c_n\phi^{-n}T, \mu_+\rangle=\langle T_-,\mu_+\rangle,
\]
which contradicts part (3) of Proposition~\ref{prop:ns}.
\end{proof}

\begin{cor}
Let $\phi\in Out(F)$, $\mu_{\pm}$, $T_{\pm }$ be as in Proposition~\ref{prop:ns}.
\begin{enumerate}
\item Let $T\in \overline{cv}(F)$. Then $\langle T,\mu_+\rangle=0$ if and only if $[T]=[T_+]$.
\item Let $\mu\in Curr(F)$, $\mu\ne 0$. Then $\langle T_+,\mu\rangle=0$ if and only if $[\mu]=[\mu_+]$.
\end{enumerate}
\end{cor}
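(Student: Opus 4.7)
The plan is to handle the two ``if'' directions at once using homogeneity and $\langle T_+,\mu_+\rangle=0$, and then to prove each ``only if'' direction by a short contradiction argument that essentially replays, in condensed form, the contradiction arguments of Theorem~\ref{thm:ue1} and Theorem~\ref{thm:ue2}.

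For the ``if'' direction of (1): if $[T]=[T_+]$ then $T=\lambda T_+$ for some $\lambda>0$, so by the $\mathbb R_{\ge 0}$-homogeneity of the intersection form from Proposition-Definition~\ref{pd:int-form} and part (3) of Proposition~\ref{prop:ns} we have $\langle T,\mu_+\rangle=\lambda\langle T_+,\mu_+\rangle=0$. The ``if'' direction of (2) is the mirror statement using $\mathbb R_{\ge 0}$-linearity in the current variable.

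For the ``only if'' direction of (1), I would argue by contradiction: suppose $\langle T,\mu_+\rangle=0$ but $[T]\ne[T_+]$. Since $\phi$ fixes $[\mu_+]$, there exists $r>0$ with $\phi\mu_+=r\mu_+$. By the North-South dynamics on $\overline{CV}(F)$ from Proposition~\ref{prop:ns}(1), we get $\lim_{n\to\infty}\phi^{-n}[T]=[T_-]$, so we can pick scalars $c_n>0$ with $\lim_{n\to\infty}c_n\phi^{-n}T=T_-$ in $\overline{cv}(F)$. Using $Out(F)$-invariance and homogeneity of the intersection form,
\[
\langle c_n\phi^{-n}T,\mu_+\rangle \;=\; c_n\langle T,\phi^n\mu_+\rangle \;=\; c_n r^n\langle T,\mu_+\rangle \;=\; 0
\]
for every $n\ge 1$. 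Passing to the limit and invoking the continuity of the intersection form (Proposition-Definition~\ref{pd:int-form}(3)) gives $\langle T_-,\mu_+\rangle=0$, contradicting the positivity of the cross-pairings in Proposition~\ref{prop:ns}(3). The ``only if'' direction of (2) is the dual argument: assume $\langle T_+,\mu\rangle=0$ and $[\mu]\ne[\mu_+]$, write $\phi T_+=\lambda T_+$, use Proposition~\ref{prop:ns}(2) to produce scalars $c_n>0$ with $c_n\phi^{-n}\mu\to\mu_-$, compute $\langle T_+,c_n\phi^{-n}\mu\rangle=c_n\lambda^n\langle T_+,\mu\rangle=0$, and pass to the limit to obtain $\langle T_+,\mu_-\rangle=0$, again contradicting Proposition~\ref{prop:ns}(3).

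I do not anticipate a real obstacle here: the corollary does not even use Theorem~\ref{thm:main} directly, and is essentially a slight strengthening of Theorem~\ref{thm:ue1} and Theorem~\ref{thm:ue2} obtained by skipping their opening reductions. The proofs of those two theorems first invoked Theorem~\ref{thm:main} to pass from a support-inclusion hypothesis to a vanishing-intersection hypothesis; taking the vanishing of $\langle T,\mu_+\rangle$ (respectively $\langle T_+,\mu\rangle$) as the hypothesis, one may enter their proofs at the dynamical step and conclude immediately. The only mild subtlety is being careful with which element of $\{T_{\pm},\mu_{\pm}\}$ is fixed by a positive scalar under $\phi$, so that the exponent $r^n$ (respectively $\lambda^n$) can be factored out without affecting the vanishing.
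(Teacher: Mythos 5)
Your proposal is correct and follows essentially the same route as the paper: the paper's proof of part (1) is exactly your contradiction argument using North-South dynamics, $\phi$-equivariance of the pairing, the scalar $r$ with $\phi\mu_+=r\mu_+$, and continuity of the intersection form, concluding via $\langle T_-,\mu_+\rangle>0$ from Proposition~\ref{prop:ns}(3), and part (2) is stated as symmetric. Your explicit treatment of the ``if'' directions via homogeneity/linearity is a small clarification the paper leaves implicit, and your observation that Theorem~\ref{thm:main} is not needed here is also accurate.
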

\begin{proof}
(1) We already know that $\langle T_+,\mu_+\rangle=0$ and that $supp(\mu_+)\subseteq L^2(T_+)$. Suppose $\langle T,\mu_+\rangle=0$ for some $T\in\overline{cv}(F)$. Suppose that $[T]\ne [T_+]$. Then $\lim_{n\to\infty} \phi^{-n} [T]=[T_-]$, so that $\lim_{n\to\infty} c_n\phi^{-n} T=T_-$ for some $c_n>0$.
We have
\[
\langle c_n\phi^{-n}T,\mu_+\rangle=\langle c_n T,\phi^n\mu_+\rangle=\langle c_n T, r^n\mu_+\rangle=c_nr^n\langle T,\mu_+\rangle=0.
\]
Since $\lim_{n\to\infty} c_n\phi^{-n} T=T_-$, the continuity of the intersection form implies that $\langle T_-,\mu_+\rangle=0$, yielding a contradiction with part (3) of Proposition~\ref{prop:ns}.

The proof of part (2) is essentially symmetric and we omit the details.
\end{proof}

As noted in the above argument, we do know that $supp(\mu_+)\subseteq
L^2(T_+)$, but even in this particular case it can happen that the inclusion is a strict one.

One can check, by directly comparing the
definitions, that, under the assumptions of Proposition~\ref{prop:ns},
$supp(\mu_+)$ is equal to the ``stable lamination'' of $\phi$ in the
sense of \cite{BFH97}.
The reason for the potential inequality $supp(\mu_+)\neq
L^2(T_+)$ comes from the possibility
that $(\xi,\zeta),(\zeta,\omega)\in supp(\mu_+)$
but $(\xi,\omega)\not\in supp(\mu_+)$, while this type of behavior is
by definition
impossible in $L^2(T_+)$.

We believe that that for any atoroidal iwip automorphism
$L^2(T_+)$ is obtained from the stable lamination of $\phi$ via the operation of ``diagonal
closure'' as indicated above.

\section{Filling elements, filling currents and bounded translation
equivalence.}

\begin{defn}\label{defn:fill}
Let $\mu\in Curr(F)$. We say that $\mu$ \emph{fills} $F$ if for
every very small action of $F$ on an $\mathbb R$-tree $T$ we have
$\langle T,\mu\rangle >0$.

Similarly, we say that an element $g\in F$ \emph{fills} $F$ if for
every very small action of $F$ on an $\mathbb R$-tree $T$ we have
$||g||_T>0$. Thus $g$ fills $F$ if and only if $\eta_g$ fills $F$.
\end{defn}

Corollary~\ref{cor:full} says that every current $\mu\in Curr(F)$
with full support fills $F$.

\begin{prop}\label{prop:fill}
Let $\mu\in Curr(F)$ be a current with full support. Let $\mu_n\in
Curr(F)$ be a sequence such that  $\lim_{n\to\infty} \mu_n
 =\mu$. Then there is $n_0\ge 1$ such that for every $n\ge
n_0$ the current $\mu_n$ fills $F$.
\end{prop}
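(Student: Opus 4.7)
The plan is to argue by contradiction using compactness of $\overline{CV}(F)$, continuity of the intersection form, and Corollary~\ref{cor:full}. Suppose the conclusion fails. Then after passing to a subsequence (which I will continue to index by $n$) we may assume that for every $n$ the current $\mu_n$ does not fill $F$, so there exists some $T_n \in \overline{cv}(F)$ with $T_n$ a minimal very small isometric action of $F$ such that $\langle T_n, \mu_n\rangle = 0$.

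Since the intersection form is $\mathbb{R}_{\ge 0}$-homogeneous in its first argument, rescaling $T_n$ does not affect the vanishing of $\langle T_n, \mu_n\rangle$. Thus I may pass to projective classes and work in the compact space $\overline{CV}(F) = \mathbb P \overline{cv}(F)$. By compactness, after passing to a further subsequence, $[T_n] \to [T_\infty]$ in $\overline{CV}(F)$ for some $[T_\infty] \in \overline{CV}(F)$. Since $\overline{CV}(F)$ consists precisely of projective classes of minimal very small isometric actions of $F$ on $\mathbb R$-trees, $T_\infty$ is a nontrivial such action. Choose representatives $T_n', T_\infty' \in \overline{cv}(F)$ with $[T_n'] = [T_n]$, $[T_\infty'] = [T_\infty]$, and $T_n' \to T_\infty'$ in $\overline{cv}(F)$. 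Then $\langle T_n', \mu_n\rangle = 0$ for every $n$, again by homogeneity.

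Now I apply the joint continuity of the intersection form stated in part (3) of Proposition-Definition~\ref{pd:int-form}: since $T_n' \to T_\infty'$ in $\overline{cv}(F)$ and $\mu_n \to \mu$ in $Curr(F)$, we have
\[
\langle T_\infty', \mu\rangle \;=\; \lim_{n\to\infty} \langle T_n', \mu_n\rangle \;=\; 0.
\]
On the other hand, $T_\infty'$ is a nontrivial minimal very small action of $F$ and $\mu$ has full support, so by Corollary~\ref{cor:full} we have $\langle T_\infty', \mu\rangle > 0$. This contradiction finishes the proof.

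The only mildly delicate step is the simultaneous rescaling: one must be careful that convergence $[T_n] \to [T_\infty]$ in the projectivized compactification lifts to convergence $T_n' \to T_\infty'$ of chosen (nontrivial) representatives in $\overline{cv}(F)$, so that the continuity statement in Proposition-Definition~\ref{pd:int-form} can be invoked. Once this is in place, the remainder is essentially immediate from the cited results.
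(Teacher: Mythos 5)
Your argument is correct and follows essentially the same route as the paper's own proof: contradict by extracting a subsequence with $\langle T_n,\mu_n\rangle=0$, use compactness of $\overline{CV}(F)$ to find suitable rescalings $T_n'\to T_\infty'$ in $\overline{cv}(F)$, invoke the joint continuity of the intersection form from \cite{KL4} to get $\langle T_\infty',\mu\rangle=0$, and contradict Corollary~\ref{cor:full}. The "mildly delicate step" you flag about lifting projective convergence to a convergent sequence of representatives is handled in the paper identically via the choice of scalars $r_i\ge 0$ with $r_iT_i\to T$ in $\overline{cv}(F)$.
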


\begin{proof}
Suppose the statement of the proposition fails. Then there exists a
sequence $n_i$ with $\lim_{i\to\infty} n_i=\infty$ and a sequence of
very small $\mathbb R$-trees $T_i$ such that $\langle
T_i,\mu_{n_i}\rangle=0$. Since $CV(F)\cup \partial CV(F)$ is
compact, there exists a sequence $r_i\ge 0$ and a very small action
of $F$ on an $\mathbb R$-tree $T$ such that $\lim_{i\to\infty} r_i
T_i= T$. Note that we have
\[\langle r_i T_i,
\mu_{n_i}\rangle=r_i\langle T_i,
 \mu_{n_i}\rangle=0.\]
 By the continuity of the intersection form on the closure of the
 non-projectivized Outer space (see~\cite{KL4}), this implies that
 \[
\langle T, \mu\rangle=0.
 \]
This contradicts the fact that, by Corollary~\ref{cor:full}, $\mu$
fills $F$.
\end{proof}

Proposition~\ref{prop:fill} immediately implies:

\begin{cor}\label{cor:fill}
Let $\mu\in Curr(F)$ be a current with full support. Let
$\lambda_n\ge 0$ and $g_n\in F$ be such that $\lim_{n\to\infty}
\lambda_n \eta_{g_n} =\mu$. Then there is $n_0\ge 1$ such that for
every $n\ge n_0$ the element $g_n$ fills $F$.
\end{cor}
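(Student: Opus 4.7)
The plan is to deduce this corollary essentially as a direct specialization of Proposition~\ref{prop:fill}. First, I would set $\mu_n := \lambda_n \eta_{g_n} \in Curr(F)$. The hypothesis says $\lim_{n\to\infty}\mu_n = \mu$, and $\mu$ has full support by assumption, so Proposition~\ref{prop:fill} applies and yields some $n_0 \ge 1$ such that for every $n \ge n_0$ the current $\mu_n$ fills $F$.

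Next I would translate this filling property of $\mu_n$ back to a filling property of the element $g_n$. Fix any $n \ge n_0$. Since $\mu_n$ fills $F$, in particular $\mu_n \neq 0$, which forces both $\lambda_n > 0$ and $g_n \neq 1$ (otherwise $\eta_{g_n} = 0$ or $\lambda_n \eta_{g_n} = 0$, contradicting the existence of some tree with positive $\langle T,\mu_n\rangle$). Then for any very small isometric action of $F$ on an $\mathbb R$-tree $T$, the $\mathbb R_{\ge 0}$-linearity of the intersection form in its second argument (Proposition-Definition~\ref{pd:int-form}(3)) together with the identity $\langle T,\eta_{g_n}\rangle = \|g_n\|_T$ gives
\[
0 < \langle T,\mu_n\rangle = \lambda_n \langle T,\eta_{g_n}\rangle = \lambda_n \|g_n\|_T.
\]
Since $\lambda_n > 0$, this forces $\|g_n\|_T > 0$, so $g_n$ fills $F$ in the sense of Definition~\ref{defn:fill}.

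There is no real obstacle here beyond invoking Proposition~\ref{prop:fill} and using positivity of $\lambda_n$; the corollary is just the rational-current specialization of that proposition, packaged through the identity $\langle T,\eta_g\rangle = \|g\|_T$.
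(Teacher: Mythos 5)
Your proposal is correct and takes essentially the same route as the paper, which states that Proposition~\ref{prop:fill} ``immediately implies'' Corollary~\ref{cor:fill}; your write-up simply spells out the specialization to rational currents and the use of $\langle T,\eta_{g_n}\rangle = \|g_n\|_T$ together with positivity of $\lambda_n$.
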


\begin{notation}
Let $F$ be a finitely generated free group and let $A$ be a free
basis of $F$. Let $\xi\in \partial F$. We represent $\xi$ by a
right-infinite freely reduced word $x_1x_2\dots x_n \dots$, where
$x_i\in A^{\pm 1}$, labelling the geodesic ray from $1$ to $\xi$ in
the Cayley graph $X(F,A)$. For every $n\ge 1$ we denote by
$\xi_A(n)$ the element of $F$ represented by the initial segment of
this ray of length $n$, that is $\xi_A(n)=x_1\dots x_n\in F$.
\end{notation}

\begin{defn}[Uniform measure corresponding to a free basis]\label{defn:ma}
Let $A$ be a free basis of $F$ and let $k\ge 2$ be the rank of $F$.
For a nontrivial freely reduced word $v\in F(A)$ let $Cyl_A(v)$ be
the set of all $\xi\in \partial F$ such that $v$ is an initial
segment of $\xi$, when $\xi$ is realized as a geodesic ray with
origin $1\in F$ in the Cayley graph $X(F,A)$.

The \emph{uniform measure on $\partial F$, corresponding to $A$},
denoted $\mu_A$, is a Borel probability measure on $\partial F$,
such that for every nontrivial freely reduced word $v\in F(A)$ we
have
\[
\mu_A\left( Cyl_A(v) \right)=\frac{1}{2k(2k-1)^{n-1}},
\]
where $n=|v|_A$.
\end{defn}

Informally, a $\mu_A$-random point $\xi\in \partial F$ corresponds
to a ``random'' right-infinite freely reduced word over $A^{\pm 1}$.
We refer the reader to \cite{Ka2,KKS,KN} for a more detailed
discussion regarding the uniform measure $\mu_A$ and the uniform
current $\nu_A$.

\begin{thm}\label{thm:fill}
Let $F=F(A)$ be a finitely generated nonabelian free group. Let
$\mu_A$ be the uniform measure on $\partial F$ corresponding to $A$.
Then there exists a set $R\subseteq \partial F$ with the following
properties:

\begin{enumerate}
\item We have $\mu_A(R)=1$.
\item For each $\xi\in R$ there is $N\ge 1$ such that for every
$n\ge N$ the element $\xi_A(n)\in F$ fills $F$.
\end{enumerate}
\end{thm}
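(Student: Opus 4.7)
The plan is to show that for $\mu_A$-a.e.\ $\xi\in\partial F$ the sequence of scaled counting currents $\tfrac{1}{n}\eta_{\xi_A(n)}$ converges in $Curr(F)$ to a positive multiple of the uniform current $\nu_A$. Since $\nu_A$ has full support (every cylinder $Cyl_{\widetilde\Gamma}(\gamma)$ has positive $\nu_A$-measure by construction), Corollary~\ref{cor:fill} will then yield the theorem at once.

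I would first set up the ergodic framework. Let $\sigma:\partial F\to\partial F$ be the shift $\sigma(x_1x_2\dots)=x_2x_3\dots$\,. Under $\mu_A$ the coordinate sequence $(x_i)_{i\ge 1}$ is a Markov chain on $A^{\pm 1}$ with transitions $P(a,b)=1/(2k-1)$ for $b\ne a^{-1}$ and $P(a,a^{-1})=0$, and with uniform stationary distribution $1/(2k)$. This chain is irreducible and aperiodic (since $P(a,a)>0$), and hence $(\partial F,\mu_A,\sigma)$ is ergodic. For any nontrivial freely reduced $v=v_1\dots v_m\in F(A)$, Birkhoff's ergodic theorem applied to the indicator $\mathbf{1}_{Cyl_A(v)}$ gives
\[
\lim_{n\to\infty}\frac{1}{n}\sum_{i=0}^{n-1}\mathbf{1}_{Cyl_A(v)}(\sigma^i\xi)=\mu_A(Cyl_A(v))=\frac{1}{2k(2k-1)^{m-1}}=\langle v,\nu_A\rangle_A
\]
for $\mu_A$-a.e.\ $\xi$, where the sum counts occurrences of $v$ beginning at positions $1,\dots,n$ in $\xi$. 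Intersecting over the countably many $v$ produces a single set $R_0\subseteq\partial F$ of full $\mu_A$-measure on which this convergence is simultaneous.

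Next I would translate frequencies of occurrence in the linear prefix $\xi_A(n)=x_1\dots x_n$ into frequencies in the reduced cyclic word $w_{\xi_A(n)}$ representing $[\xi_A(n)]_F$. Cyclic reduction trims some number $k_n=k_n(\xi)\ge 0$ of letters from each end, and a routine mixing/Borel--Cantelli estimate for the Markov chain above gives $k_n=O(\log n)$, in particular $k_n=o(n)$, for $\mu_A$-a.e.\ $\xi$. Since passing from the linear count of $v$ in $\xi_A(n)$ to the cyclic count in $w_{\xi_A(n)}$ alters the result by at most $2k_n+m$, on a further full-measure set $R\subseteq R_0$ one has
\[
\lim_{n\to\infty}\frac{\langle v,w_{\xi_A(n)}\rangle_A}{n}=\langle v,\nu_A\rangle_A
\]
for every freely reduced $v\in F(A)$. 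The formula $\langle v,\eta_g\rangle_A=\langle v,w_g\rangle_A+\langle v^{-1},w_g\rangle_A$ from Section~\ref{sect:currents}, combined with the flip-invariance $\langle v,\nu_A\rangle_A=\langle v^{-1},\nu_A\rangle_A$, then yields
\[
\lim_{n\to\infty}\left\langle v,\tfrac{1}{n}\eta_{\xi_A(n)}\right\rangle_A=2\langle v,\nu_A\rangle_A\qquad(\xi\in R).
\]
By the cylinder-mass characterization of convergence in $Curr(F)$, this is precisely the statement $\tfrac{1}{n}\eta_{\xi_A(n)}\to 2\nu_A$ in $Curr(F)$ for every $\xi\in R$.

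Finally, since $2\nu_A$ has full support, Corollary~\ref{cor:fill} applied with $\mu=2\nu_A$, $\lambda_n=1/n$ and $g_n=\xi_A(n)$ produces, for each $\xi\in R$, some $N(\xi)\ge 1$ such that $\xi_A(n)$ fills $F$ for all $n\ge N(\xi)$, proving the theorem. The main technical obstacle is the measure-theoretic input of the middle step -- ergodicity of $(\partial F,\mu_A,\sigma)$ together with the $o(n)$ bound on cyclic reduction -- but both follow from standard properties of the Markov structure of $\mu_A$; once these are granted, the remainder is a direct invocation of Corollary~\ref{cor:fill}.
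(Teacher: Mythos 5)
Your proof is correct and follows the same route as the paper's: show that $\tfrac{1}{n}\eta_{\xi_A(n)}$ converges to a positive multiple of $\nu_A$ for $\mu_A$-a.e.\ $\xi$, then apply Corollary~\ref{cor:fill}. The only difference is that the paper simply cites \cite{Ka2} for this a.e.\ convergence, whereas you prove it directly via the Birkhoff ergodic theorem for the Markov shift underlying $\mu_A$ together with the $o(n)$ bound on the cyclic-reduction defect, which makes the argument self-contained; your limiting current comes out as $2\nu_A$ rather than the $\nu_A$ stated in the paper, but since either has full support this normalization discrepancy is immaterial to the conclusion.
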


\begin{proof}
Let $\nu_A$ be the uniform current on $F$ corresponding to $A$. Note
that by construction $\nu_A$ has full support. As shown in
\cite{Ka2}, there exists a subset $R\subseteq
\partial F$ with $\mu_A(R)=1$ such that for each $\xi\in R$ we have
\[
\lim_{n\to\infty} \frac{\eta_{\xi_A(n)}}{n}=\nu_A.
\]

Corollary~\ref{cor:fill} implies that for every $\xi\in R$ there is
$N\ge 1$ such that for each $n\ge N$ the element $\xi_A(n)$ fills
$F$, as required.
\end{proof}

Theorem~\ref{thm:fill} says that an ``almost generic'' element of
$F(A)$ fills $F$.

\begin{defn}
We say that nontrivial elements $g,h\in F$ are \emph{boundedly
translation equivalent} in $F$, denoted $g\equiv_b h$, if there is
$C\ge 1$ such that for every free and discrete action of $F$ on an
$\mathbb R$-tree $T$ we have

\[
\frac{1}{C} ||h||_T\le ||g||_T\le C ||h||_T
\]

\end{defn}
Note that in the above definition we can replace ``every free and
discrete action'' by ``every very small action''.

\begin{prop}\label{prop:bta}
Let $f,g\in F$ both fill $F$. Then $f\equiv_b g$ in $F$.
\end{prop}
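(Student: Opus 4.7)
The plan is to imitate, almost verbatim, the compactness argument used in the proof of Theorem~\ref{thm:bl}, but applied to the two rational currents $\eta_f$ and $\eta_g$ instead of two trees. The idea is that the ratio $||f||_T/||g||_T$, viewed as a function of $T \in \overline{cv}(F)$, is a continuous, strictly positive, scale-invariant function on the non-projectivized space, hence descends to a continuous strictly positive function on the compact space $\overline{CV}(F)$ and therefore attains positive maximum and minimum values.

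First I would observe that for any very small $T \in \overline{cv}(F)$ we have $||f||_T = \langle T, \eta_f\rangle$ and $||g||_T = \langle T, \eta_g\rangle$ by Proposition-Definition~\ref{pd:int-form}, and that both quantities are strictly positive by the filling hypothesis on $f$ and $g$ (Definition~\ref{defn:fill}). I would then define
\[
\mathcal{J}: \overline{cv}(F) \to \mathbb{R}, \qquad \mathcal{J}(T) := \frac{\langle T, \eta_f\rangle}{\langle T, \eta_g\rangle}.
\]
The intersection form is continuous and $\mathbb{R}_{\ge 0}$-homogeneous in the tree argument (Proposition-Definition~\ref{pd:int-form}(3)), so $\mathcal{J}$ is continuous and satisfies $\mathcal{J}(\lambda T) = \mathcal{J}(T)$ for all $\lambda > 0$. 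Hence $\mathcal{J}$ factors through a continuous strictly positive function $\mathcal{J}': \overline{CV}(F) \to \mathbb{R}_{>0}$.

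Next I would invoke compactness of $\overline{CV}(F)$: the function $\mathcal{J}'$ attains a positive maximum $C_1 < \infty$ and a positive minimum $C_2 > 0$, so that $C_2 \le ||f||_T/||g||_T \le C_1$ for every $T \in \overline{cv}(F)$. Setting $C := \max(C_1, 1/C_2)$ gives
\[
\frac{1}{C}||g||_T \le ||f||_T \le C||g||_T
\]
for every very small minimal isometric $F$-action on an $\mathbb{R}$-tree $T$, which in particular covers every free discrete action. This is exactly $f \equiv_b g$.

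I do not expect a real obstacle here: the whole argument is a one-line reduction to compactness of $\overline{CV}(F)$ together with continuity of the intersection form, and is structurally identical to the proof of Theorem~\ref{thm:bl} already given in the excerpt. The only conceptual point worth remarking on is that the filling hypothesis is precisely what is needed to ensure the denominator $\langle T, \eta_g\rangle$ never vanishes on $\overline{cv}(F)$, which is what makes $\mathcal{J}'$ a well-defined continuous strictly positive function on the compact space.
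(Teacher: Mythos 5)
Your proof is correct and is essentially the same argument as the paper's: the paper directly defines $D([T]):=||f||_T/||g||_T$ on the compact space $CV(F)\cup\partial CV(F)$ and invokes continuity and positivity (from the filling hypothesis) to extract a positive maximum and minimum. Your version merely phrases the scale-invariance explicitly and packages $||f||_T$ as $\langle T,\eta_f\rangle$ before descending to $\overline{CV}(F)$, which is a cosmetic rather than substantive difference.
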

\begin{proof}

Consider the following function $D: CV(F)\cup \partial CV(F)\to
\mathbb R$. For every very small action of $F$ on an $\mathbb
R$-tree $T$ put
\[
D([T]):=\frac{||f||_T}{||g||_T}
\]
Since $f$ and $g$ both fill $F$, the function $D$ is well-defined on
$CV(F)\cup \partial CV(F)$ and, moreover, $D([T])>0$ for every
$[T]\in CV(F)\cup \partial CV(F)$. It is also clear that $D$ is
continuous. Since $CV(F)\cup \partial CV(F)$ is compact, it follows
that $D$ achieves a positive minimum and a positive maximum on
$CV(F)\cup \partial CV(F)$. This implies that $f\equiv_b g$, as
claimed.
\end{proof}

Theorem~\ref{thm:fill} and Proposition~\ref{prop:bta} show that the
phenomenon of bounded translation equivalence is ``almost generic'' in
$F(A)$.

\end{document}